            		\newtheorem{thm}{Theorem}
            		\newtheorem{lem}[thm]{Lemma}
            		\newtheorem{prop}[thm]{Proposition}
            		\newtheorem{rmk}[thm]{Remark}
            		\theoremstyle{definition}
            		\newtheorem{defi}[thm]{Definition}
            		\numberwithin{equation}{section}
            		\newcommand{\be}{\begin{equation}}
            		\newcommand{\ee}{\end{equation}}
            		\newcommand{\bd}{\begin{displaymath}}
            		\newcommand{\ed}{\end{displaymath}}
\begin{document}
            			
            			\date{}
            			\title[Min-max embedded geodesic lines]
            			{Min-max embedded geodesic lines \\ in asymptotically conical surfaces}
            			\author{Alessandro Carlotto}
            			\address{ETH - Department of Mathematics  \\
            				ETH \\
            				Z\"urich, Switzerland}
            			\email{alessandro.carlotto@math.ethz.ch}
            			\author{Camillo De Lellis}
            			\address{Institut f\"ur Mathematik \\
            				Universit\"at Z\"urich \\
            				Z\"urich, Switzerland}
            			\email{camillo.delellis@math.uzh.ch}           
            			
            			\begin{abstract}
            				We employ min-max methods to construct uncountably many, geometrically distinct, properly embedded geodesic lines in any asymptotically conical surface of non-negative scalar curvature, a setting where minimization schemes are doomed to fail. Our construction provides control of the Morse index of the geodesic lines we produce, which will be always less or equal than one (with equality under suitable curvature or genericity assumptions), as well as of their precise asymptotic behaviour. In fact, we can prove that in any such surface for every couple of opposite half-lines there exists an embedded geodesic line whose two ends are asymptotic, in a suitable sense, to those half-lines.
            			\end{abstract}
            			
            			\maketitle
            			
            			\section{Introduction}
            			
            			The quest for closed geodesics in compact Riemannian manifolds has been one of the main themes in the modern history of differential geometry. This problem, dating back at least to H. Poincar\'e \cite{Poi05}, has been approached by a variety of methods, whose development turned out to be remarkably useful in a wide range of fields. Among these, special importance is deserved by the curve-shortening scheme proposed by G. Birkhoff \cite{Bir17} in order to construct simple closed geodesics on manifolds whose fundamental group is trivial, so that minimization methods are not successfully applicable. The ideas behind this approach turned out to be crucial in the development of min-max methods for the area functional, which allowed Almgren and Pitts \cite{Pit81} to show existence of at least one closed embedded minimal hypersurface in any compact manifold of dimension less than six (which was later extended to higher dimensions by Schoen and Simon \cite{SS81}). In turn, these methods proved to be extremely powerful in tackling a number of fundamental questions in geometry, like the Willmore conjecture \cite{MN14}, the Freedman conjecture on the energy of links \cite{AMN16}, the Yau conjecture on minimal hypersurfaces in manifolds of positive Ricci curvature \cite{MN16} and (most recently) the problem of constructing new classes of (higher genus) solitons for the mean curvature flow \cite{Ket16}, just to name a few. We refer the reader to the beautiful ICM lectures by F. Marques \cite{Mar14} and A. Neves \cite{Nev14} for a broader overview and contextualization of these methods. \\
            			
            			\indent A somehow analogous question, also of global nature, is that of existence of \ul{embedded} geodesic lines: given a (non-compact) Riemannian manifold $(M,g)$ we wonder about the existence of (proper) geodesic embeddings $\gamma:\mathbb{R}\to M$. In general, the answer to such a question depends, in a dramatic fashion, on the topology and on the asymptotic structure of $(M,g)$. Simple existence theorems are only at disposal when the problem is approachable by minimization methods, that is to say by taking limits of length-minimizing geodesic segments for endpoints escaping at infinity in some appropriate fashion.
            			This approach does indeed work, for instance, if $(M,g)$ has suitable convexity properties at infinity. When these sorts of assumptions are not made, trying to construct geodesic lines by solving a sequence of minimization problems will not work as there are in general no geometric reasons for the sequence of geodesic segments one may construct not to escape from any given compact subset of the manifold in question. In fact, this is the typical behaviour on positively curved manifolds for in that case the formula for the second variation of the length functional ensures that no stable geodesic lines can actually exist. An important class of surfaces that exhibit this phenomenon is provided by asymptotically conical ones, which arise as asymptotically flat models in $2+1$ gravity (see e. g. \cite{BBL86, Bro88, Col88, DJtH84} and \cite{Car98}). In that context the non-negativity of the scalar curvature is just a reflex of the dominant energy condition (DEC) and the non-existence of embedded, stable geodesic lines is the two-dimensional counterpart of a well-known obstruction disclosed by Schoen-Yau in their proof of the Positive Mass Theorem \cite{SY79} and recently widely investigated in its connections to the large-scale structure of isolated gravitational systems \cite{EM12, EM13, Car14, CCE15, CS14}.\\
            			
            			\indent In spite of the fact that minimization methods are doomed to fail, we shall prove here that every asymptotically conical surface does in fact contain lots of (properly) embedded geodesic lines, whose Morse index equals one under natural curvature conditions.
            			
            			\begin{thm}\label{thm:maingeod}
            				An asymptotically conical surface of non-negative scalar curvature contains infinitely many, geometrically distinct, properly embedded geodesic lines of Morse index less or equal than one. If the scalar curvature is assumed to be positive equality holds.
            			\end{thm}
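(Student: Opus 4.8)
The plan is to fix a pair of opposite half-lines at the end of $M$ --- equivalently a point $\sigma$ of the ``circle at infinity'' modulo the antipodal map --- to produce for each such $\sigma$, by a one-parameter min-max scheme, a properly embedded geodesic line whose two ends are asymptotic to that pair, and then to observe that as $\sigma$ ranges over a circle the resulting lines are automatically pairwise geometrically distinct, being distinguished by their asymptotics; this already yields uncountably (hence infinitely) many. So the whole problem reduces to the construction and the index estimate at a fixed $\sigma$.

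For the construction I would fix asymptotically conical coordinates $(r,\theta)$ near the end, so that $g$ differs from the flat model cone by terms that decay (with derivatives) as $r\to\infty$, exhaust $M$ by compact surfaces-with-boundary $D_R=\{r\le R\}\cup(\text{core})$, and mark on $\partial D_R$ the two points $p_R^{\pm}$ lying in the two prescribed directions. On $D_R$ one runs Birkhoff's curve-shortening process in the space of \emph{embedded} arcs from $p_R^{+}$ to $p_R^{-}$: after an initial shortening the two arcs of $\partial D_R\setminus\{p_R^{\pm}\}$ become the two embedded minimizing geodesics $g_R^{\pm}$ on either side, and these are the two ``ends'' of a sweepout, whose width is $W_R=\inf\sup_t \cL(\gamma_t)$. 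The first point to check is that this is a genuine mountain pass, i.e. that $W_R$ strictly exceeds the two valley values $\cL(g_R^{\pm})$: here one uses that any sweepout must at some time bound, together with a boundary arc, half the area of $D_R$, and then the isoperimetric inequality on the (nearly conical) domain $D_R$ to force a definite length excess. Standard deformation theory then yields an embedded geodesic arc $\gamma_R\subset D_R$ realizing $W_R$, touching $\partial D_R$ only at $p_R^{\pm}$, of Morse index $\le 1$ among arcs with those fixed endpoints.

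The heart of the matter --- and the step I expect to be the main obstacle --- is the passage to the limit $R\to\infty$, where one must exclude the two degenerations: the $\gamma_R$ collapsing onto a boundary arc, and the $\gamma_R$ escaping to infinity. The mountain-pass lower bound on $W_R$ gives a uniform positive lower bound for the length of $\gamma_R$ inside any fixed compact set, ruling out collapse; for the escape and for the identification of the asymptotics, the key is a comparison with the explicitly understood geodesics of the flat cone. Since the curvature is non-negative, Gauss--Bonnet constrains the asymptotic (conical) geometry, and a barrier/foliation argument --- using the coordinate circles and coordinate rays as comparison curves and absorbing the decaying error terms --- shows that a geodesic arc of controlled length with an endpoint at $p_R^{\pm}$ must, in the region $\{r\text{ large}\}$, remain $\ve$-close to the ray in the corresponding direction. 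With these estimates $\gamma_R$ subconverges in $C^\infty_{\mathrm{loc}}$ to a complete geodesic $\gamma_\infty$ which is non-trivial, properly embedded (embeddedness passes to the limit for one-dimensional min-max on surfaces), and has its two ends asymptotic to the prescribed half-lines.

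Finally, for the index: the principle that a one-parameter min-max critical point has Morse index at most one survives in this non-compact setting, for if $\gamma_\infty$ carried a two-dimensional space of compactly supported normal variations on which the second variation were negative definite, one could use it to push down a nearly maximal slice of an optimal sweepout on $D_R$ for $R$ large, contradicting the definition of $W_R$. When $K>0$ one upgrades ``$\le 1$'' to ``$=1$'': a complete geodesic line in a surface of strictly positive curvature cannot be stable, since inserting a slowly varying cutoff of the constant normal field into the second variation $\int(|\dot V|^2-K|V|^2)$ makes it negative, so the index is exactly one (and the same holds under the stated genericity hypotheses, index $0$ being excluded there by a perturbation argument). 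Together with the distinctness across the circle of parameters $\sigma$, this completes the proof.
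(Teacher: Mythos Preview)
Your overall architecture matches the paper's: fix an antipodal direction, solve a sequence of fixed-endpoint min-max problems at scale $R$, pass to a limit, and then vary the direction to get uncountably many lines. The index discussion at the end is also essentially the paper's (one-parameter min-max gives index $\le 1$; the cutoff-in-the-stability-inequality argument forces index $\ge 1$ when $K>0$). But two of the steps you flag as routine are precisely where the real work lies, and the mechanisms you propose are not the ones that actually succeed.

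\textbf{The no-drift step.} You write that the mountain-pass lower bound on $W_R$ ``gives a uniform positive lower bound for the length of $\gamma_R$ inside any fixed compact set''. It does not: $W_R$ controls only the \emph{total} length of $\gamma_R$, and there is no a priori reason the curves cannot slide off to infinity while keeping that length. Your proposed ``barrier/foliation'' argument with coordinate circles and rays is too weak here; the coordinate circles are strictly convex, so they prevent a geodesic from escaping \emph{outward} through them, but they do nothing to force $\gamma_R$ to meet a fixed compact set. The paper's mechanism is different and rather sharp: one applies Gauss--Bonnet to the (possibly multiply connected) region trapped between $\gamma_R$ and one of the two stable geodesics $g_R^{\pm}$. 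If $\gamma_R$ drifted to infinity, the curvature integral over that region would tend to zero (by the decay $|K|\le Cr^{-2-\mu}$), forcing all exterior angles to tend to $\pi$; this in turn forces the blow-down of $\gamma_R$ to coincide with the blow-down $\Gamma'$ of $g_R^{+}$, hence to have rescaled length $2\sin(\tfrac{\pi}{2}\sin\alpha)$. But the min-max value satisfies $W_R/R\to 2$ (this is where one needs the precise identification of the min-max level, not just a gap), and $2>2\sin(\tfrac{\pi}{2}\sin\alpha)$ gives the contradiction. A separate Gauss--Bonnet argument is then needed to rule out length \emph{concentration} (multiplicity $>1$ in the limit), which you do not address.

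\textbf{The mountain-pass gap and the distinctness.} Your isoperimetric/half-area argument might produce \emph{some} gap, but the no-drift argument above requires the exact asymptotic $W_R\sim 2R$, which the paper gets by observing that every sweepout must hit a fixed interior point $o$, so some slice has length $\ge d(p_R,o)+d(o,q_R)\to 2R$; an upper bound $W_R\le 2R+o(R)$ comes from an explicit sweepout through radial segments. Finally, the claim that distinct directions $\sigma$ give distinct lines ``automatically, by their asymptotics'' hides a genuine issue: one must first show that the limit line is actually asymptotic to the prescribed half-lines and cannot \emph{twist} toward a different pair. The paper handles this with another Gauss--Bonnet computation (its non-twisting Proposition), comparing a putative twisted min-max arc against a reference ray in a wedge; without it, the map $\sigma\mapsto\gamma_\infty(\sigma)$ is not obviously injective.
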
	
            			
            			We refer the reader to Subsection \ref{subs:acs} for a precise definition of asymptotically conical surface and for the recollection of a few basic facts. A brief discussion of the positive mass theorem in two spatial dimensions is provided in Subsection \ref{subs:pmt2}. \\
            			
            			\indent In order to avoid dangerous misunderstandings, let us remark here that the geodesic lines we construct are \textsl{never} length-minimizing (in other terms: they are not \textsl{straight lines}) for otherwise the ambient manifold $(S,g)$ would of course split as a Riemannian product (by virtue of the well-known theorem by S. Cohn-Vossen \cite{CV36}, later exetended by Cheeger-Gromoll \cite{CG71}). In fact, our result ensures that complete, embedded geodesics that are not length-minimizing exist in abundance under very natural assumptions on the asymptotic behaviour of the ambient manifold.

            			\begin{rmk} In the statement of Theorem \ref{thm:maingeod} the assumption that the scalar curvature be non-negative forces the surface in question to be a complete plane (namely: to have only one end and genus zero), see Theorem \ref{lem:pmt2}. However, this is not restrictive (as far as one is concerned with the existence problem) for in the presence of at least two ends one can just obtain properly embedded geodesic lines by taking a limit of minimizing segments whose endpoints diverge on different ends of the surface in question. In fact, a similar strategy allows to deal with the case when the surface contains a non-separating closed curve and thus solves the problem when the genus is not zero (see e. g. \cite{Ban81a} pg. 64). 
            			\end{rmk} 
            			
            			
            			\indent It is important to contextualize our result with respect to the rich history concerning the quest for \textsl{escaping rays}. For a broad overview, with several remarkable contributions, of the study of maximal geodesics on complete surfaces we refer the reader to \cite{Shi94, SST03} and references therein. 
            			The question of existence of proper geodesic embeddings $\gamma:\mathbb{R}\to M$ for $M$ a Riemannian manifold homeomorphic to the plane was explicitly posed by S. Cohn-Vossen in 1936. After various sorts of partial contributions, it was then finally solved by V. Bangert in 1981 (see \cite{Ban81a}, as well as \cite{Ban81b,Ban81c} and references therein for related and ancillary results; see also Proposition 6.1 in \cite{BL03} for a refined existence result for planes of finite positive total curvature). Yet, the arguments employed to answer such a question in full generality are rather indirect and provide little information on the geodesic line whose existence is proven and in particular do not provide any information at all about the Morse index of the line itself. The author needs to distinguish various cases, depending on whether the surface does contain a closed geodesic or not. In this respect, Bangert states (in \cite{Ban81a}, p. 59): \textsl{We have not been able to find a general method to construct escaping geodesics without self-intersection.} In this paper, we present an effective geometric construction in the category of asymptotically conical surfaces, a category naturally arising in the physical setting described above. Perhaps more importantly, Bangert concluded his article with the following question:
            			\
            			
            			\begin{center}
            				\textsl{Are \kern-0.1em there \kern-0.1em infinitely \kern-0.1em many \kern-0.1em escaping \kern-0.1em geodesics \kern-0.1em on \kern-0.1em every complete \kern-0.1em plane \kern-0.1em $S$?}
            			\end{center}
            			
            			\
            			
            			While very exhaustive results have been achieved in the case of \textsl{negatively curved planes} (see \cite{FM01}), the problem is still far from being completely understood for what concerns positively curved metrics on $\mathbb{R}^2$.
            			Our work provides a novel contribution in this direction, since in fact the argument we describe in Section \ref{sec:bangert} shows that for every couple of opposite rays on our surface we can exhibit an embedded geodesic line whose two ends are asymptotic, in a suitable sense which we shall describe later, to those half-lines. \\

            			\indent The proof of Theorem \ref{thm:maingeod} naturally splits into two parts, the full conclusion following at once by combining Proposition \ref{prop:existence} (for the \textsl{existence} part) and Proposition \ref{prop:multiplicity} (for the \textsl{multiplicity} part). In the next two paragraphs we shall briefly outline them.\\
            			
            			\indent The geodesic lines we construct are obtained by min-max methods. More precisely, we exploit the information on the asymptotic behaviour of our ambient surface to set-up a sequence of Plateau min-max problems (for geodesics) and then check that the sequence of geodesics with boundary we obtain cannot drift off to infinity together with their boundary points. Let us now describe the structure of our proof in more detail. For the first step (which is done in Section \ref{sec:finscale}), we prove that whenever one can join two points on a surface by means of two embedded geodesics that bound a disk \textsl{and} the standard mountain-pass condition holds then there is a third embedded\footnote{The emphasis here is on the word \textsl{embedded} both in the assumption and in the conclusion of our assertion, for otherwise the result would just be a routine application of one-dimensional min-max schemes.}geodesic joining the two points in question (Proposition \ref{prop:finscale}). Of course, such a condition is automatically satisfied when the two geodesics that are given are strictly stable. This result, of independent interest and potentially wide applicability, relies on the combined use of the \textsl{one-dimensional} $H^1$-gradient flow and, perhaps more importantly, on the clever \textsl{resolution of singularities} procedure proposed by G. Chambers and Y. Liokumovich \cite{CL14} in order to effectively convert homotopies into isotopies. For the scope of controlling the index, we have found it convenient to work with the energy functional (rather than the length functional), somehow in the spirit of the parametric approach to the min-max Plateau problem proposed long time ago by Shiffman \cite{Shi39} and Morse-Tompkins \cite{MT39}. The fact that the embedded geodesic segments that we produce do not escape from any given compact set is proven by using the Gauss-Bonnet theorem and a blow-down procedure (since \textsl{all} geodesics connecting two antipodal points at the same height on a cone are known).
            			This \textsl{no-drift} argument, which lies at the core of this construction, is presented in Section \ref{sec:proof}. 
            			
            			At that stage, we prove that this method does in fact produce \textsl{uncountably many} distinct geodesic lines. Roughly speaking, this is achieved as follows. The min-max geodesic segments we produce connect (by construction) couple of \textsl{antipodal} points in the asymptotic region, where one has coordinates $(r,\varphi)\in (0,+\infty)\times S^1$ and each geodesic line is obtained as a (subsequential) geometric limit as the first coordinate of such points goes to infinity, with the second coordinate fixed to values $\varphi_0$ and $\varphi_0+\pi$. In principle, one expects that as we vary $\varphi_0$ we should indeed obtain distinct geodesic lines, but this is not obvious as \textsl{twisting phenomena} may occur without leading to any contradiction by means of a direct blow-down procedure. The relevant argument, which proves Proposition \ref{prop:multiplicity}, is described in Section \ref{sec:bangert}. \\

            			\indent When considering our work in the context of min-max techniques, one direction we should mention is the development of methods for contructing closed (or, more generally, finite area) minimal hypersurfaces in non-compact Riemannian manifolds, due to Bangert \cite{Ban80} and Thorbergsson \cite{Tho78} for the special case of finite length geodesics on certain non-compact surfaces (of finite area) and, much more recently, remarkably extended by Montezuma \cite{Mon14} (resp. Chambers-Liokumovich \cite{CL16}) to handle closed (resp. finite area) minimal hypersurfaces in classes of manifolds satisfying various types of asymptotic geometric conditions. Our scope here is rather different: while our setting is also non-compact, we aim at constructing variational objects which are themselves non-compact and for which the relevant functional (in our case: the energy functional) attains infinite value, thereby providing an obstacle of new and peculiar nature. In fact, the next step in our programme is precisely the extension of the methods presented here in the special case of geodesics to the construction of complete (non-compact) minimal hypersurfaces in suitable classes of non-compact Riemannian manifolds.     \\

            			\textsl{Acknowledgments}. This project was partly developed while both authors where at Harvard University as visiting scholars at the Center of Mathematical Sciences and Applications on invitation by Shing-Tung Yau: the warm hospitality and excellent working conditions provided by these institutions are gratefully acknowledged. The first-named author would like to thank Richard Schoen for a number of enlightening conversations on themes related to min-max constructions in non-compact manifolds. The authors also wish to express their sincere gratitude to the anonymous referees for carefully reading this article and for preparing detailed reports, which resulted in a significantly improved version of the paper.
            			 This article was prepared while the first-named author was an ETH-ITS fellow. The second-named author acknowledges the support of the Swiss National Foundation, through grant SNF 159403.

            			\section{Setting and recollections}\label{sec:setup}
            			
            			\subsection{Cones and their geodesics}
            			
            			Let us consider on $\mathbb{R}^2\setminus\left\{0\right\}$ the local parametrizations obtained by restrictions of the smooth covering map $F:(0,+\infty)\times S^1\to \mathbb{R}^2\setminus\left\{0\right\}$ defined by
            			\[
            			F(r,\varphi)=(r\cos\varphi, r\sin\varphi).
            			\]
            			For $\alpha\in (0,\pi/2]$ we consider on $\mathbb{R}^2\setminus\left\{0\right\}$ the incomplete Riemannian metric
            			\[
            			g_\alpha = dr\otimes dr + r^2\sin^2(\alpha)d\varphi\otimes d\varphi
            			\]
            			and we let $C_{\alpha}$ denote the corresponding Riemannian manifold $(\mathbb{R}^2\setminus\left\{0\right\}, g_{\alpha})$. We shall also consider the (metric) closure $\overline{C}_{\alpha}$, a complete (singular) cone of opening angle\footnote{In order to avoid ambiguities, let us remark that $\alpha$ is the angle between the axis and the generatrix of the cone $C_{\alpha}$ when this is isometrically embedded in $\mathbb{R}^3$ in the standard fashion.}$\alpha$. We let $d$ denote the (path)-distance on $\overline{C}_{\alpha}$ and $v\in\overline{C}_{\alpha}\setminus C_{\alpha}$ the vertex of the cone. \\
            			
            			\indent In order to perform our min-max conctruction, we need to recall a few facts, whose proofs are straightforward consequences of the characterization of geodesics in flat $\mathbb{R}^2$. 
            			
            			\begin{defi}  We will say that two points $p, q \in C_{\alpha}$ are antipodal if in the coordinate charts above one has $r(p)=r(q)$ and $|\varphi(p)-\varphi(q)|=\pi$.
            			\end{defi}
            			
            			\begin{lem}\label{lem:geodcon}
            				In the setting above, when $\alpha\in (0,\pi/2)$ for every couple of antipodal points $p, q$ (set $r:=r(p)=r(q)$) there are exactly two smooth, distinct geodesics connecting them (whose length equals $2r\sin\left(\frac{\pi}{2}\sin\alpha\right)$) and one singular geodesic (whose length equals $2r$). In particular
            				\[
            				d(p,q)= 2 d(p,v)\sin\left(\frac{\pi}{2}\sin\alpha\right)=2 d(q,v)\sin\left(\frac{\pi}{2}\sin\alpha\right).
            				\]
            				
            			\end{lem}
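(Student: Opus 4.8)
The plan is to reduce everything to elementary Euclidean geometry via the fact that a flat cone is developable. Concretely, write the metric of $C_\alpha$ in the coordinates $(r,\psi)$ with $\psi:=\varphi\sin\alpha$: it becomes $dr\otimes dr+r^{2}\,d\psi\otimes d\psi$, i.e.\ the Euclidean metric in polar form, with $\psi$ now living in $\mathbb{R}/(2\pi\sin\alpha)\mathbb{Z}$. Thus $C_\alpha$ is a flat cone of total angle $2\pi\sin\alpha<2\pi$; its Riemannian universal cover is $\widehat C_\alpha=\{(r,\psi):r>0,\ \psi\in\mathbb{R}\}$ with the same metric, and the developing map $D(r,\psi)=(r\cos\psi,r\sin\psi)$ is a local isometry and a covering onto $\mathbb{R}^{2}\setminus\{0\}$ with its flat metric. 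Under this dictionary, a smooth geodesic of $C_\alpha$ (one that does not reach the vertex $v$) lifts to a geodesic of $\widehat C_\alpha$, which $D$ maps to a straight Euclidean segment avoiding the origin; and conversely every such segment projects to a smooth geodesic of $C_\alpha$.

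I would then enumerate the geodesics from $p$ to $q$ as follows. Fix a lift $\hat p$ of $p$. Since $r(p)=r(q)=:r$ and $|\varphi(p)-\varphi(q)|=\pi$, every lift of $q$ to $\widehat C_\alpha$ has radial coordinate $r$ and sits at polar-angular distance $(2k+1)\pi\sin\alpha$ from $\hat p$, $k\in\mathbb{Z}$. A Euclidean segment joining $\hat p$ to such a lift misses the origin exactly when this angular gap is $<\pi$, i.e.\ $|2k+1|\sin\alpha<1$. Because $\sin\alpha<1$ (here is where $\alpha<\pi/2$ enters), this holds for $k=0$ and $k=-1$, where the gap is $\pi\sin\alpha$ and the segment has length $2r\sin\!\big(\tfrac{\pi}{2}\sin\alpha\big)$; for every other $k$ the gap is at least $3\pi\sin\alpha>2\pi\sin\alpha$, so the segment winds strictly more than once around $v$ and its projection to $C_\alpha$ is not simple. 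Hence there are exactly two embedded smooth geodesics from $p$ to $q$; they are genuinely distinct, the two defining segments not differing by a deck transformation, and each has length $2r\sin\!\big(\tfrac{\pi}{2}\sin\alpha\big)$.

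For the singular geodesic I would argue that any curve from $p$ to $q$ that meets $v$ decomposes into a path from $p$ to $v$ and a path from $v$ to $q$, each of which is shortened to the corresponding radial segment of length $r$; the concatenation of these two radial segments is the unique such curve and has length $2r$. For the final identity: the radial segments give $d(p,v)=d(q,v)=r$; any curve from $p$ to $q$ avoiding $v$ has length at least $2r\sin\!\big(\tfrac{\pi}{2}\sin\alpha\big)$ (the length of the shortest geodesic of $C_\alpha$ between $p$ and $q$, by the enumeration above), and this value is attained; and any curve meeting $v$ has length at least $2r$. Since $\tfrac{\pi}{2}\sin\alpha\in\big(0,\tfrac{\pi}{2}\big)$ gives $\sin\!\big(\tfrac{\pi}{2}\sin\alpha\big)<1$, we conclude $d(p,q)=2r\sin\!\big(\tfrac{\pi}{2}\sin\alpha\big)=2\,d(p,v)\sin\!\big(\tfrac{\pi}{2}\sin\alpha\big)$.

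The substantive points — as opposed to bookkeeping — are: verifying the bijection between geodesics of $C_\alpha$ and the relevant Euclidean segments so that none are lost or spuriously added; checking that a segment whose angular gap is $\ge 3\pi\sin\alpha$ indeed self-intersects after projection (a one-line convexity estimate for the function $r$ along a Euclidean chord); and recognizing that a prospective smooth geodesic degenerates into the singular one precisely when $\pi\sin\alpha=\pi$, i.e.\ $\alpha=\pi/2$, which is exactly why that endpoint is excluded. Beyond these, the argument is a direct computation in the explicit flat model.
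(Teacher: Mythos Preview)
The paper does not actually prove this lemma: it remarks just beforehand that the relevant facts are ``straightforward consequences of the characterization of geodesics in flat $\mathbb{R}^2$'', and the unfolding of $C_\alpha$ into a Euclidean wedge that you employ is precisely the device the paper records a few lines later in Remark~\ref{rem:flatwedge}. Your developing-map argument is therefore exactly the intended approach; the one refinement you add---distinguishing the two \emph{simple} geodesics from the additional non-simple ones that appear when $\sin\alpha<\tfrac{1}{3}$---is a point the paper's phrasing glosses over but which is harmless for its downstream applications.
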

            			
            			
            			We shall also remind the reader of the following important consequence of the Clairaut equation expressing the conservation of angular momentum for geodesics on surfaces of revolution.
            			
            			\begin{lem}\label{lem:clairaut}
            				A (smooth) geodesic on $\overline{C}_{\alpha}$ that intersects every neighborhood of the vertex $v\in\overline{C}_{\alpha}\setminus C_{\alpha}$ must be radial. In other words, if a geodesic path $\gamma:(0,1)\to C_{\alpha}$, parametrized by arc-length, satisfies $-1<g_{\alpha}(\dot{\gamma},\partial_{r})<1$ at some point, then there exists $d_0>0$ such that $\gamma(0,1)$, the support of $\gamma$, is disjoint from the metric ball $B_{d_0}(v)$ on $\overline{C}_{\alpha}$.
            			\end{lem}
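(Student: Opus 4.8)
\emph{Proof proposal.} The plan is to read the conclusion off Clairaut's relation, exactly as the sentence preceding the statement suggests. First I would fix the local coordinates $(r,\varphi)$ furnished by $F$ and recall that a curve $\gamma(s)=(r(s),\varphi(s))$, parametrized by arc length, is a geodesic for $g_\alpha = dr\otimes dr + r^2\sin^2(\alpha)\,d\varphi\otimes d\varphi$ precisely when it is a critical point of the energy Lagrangian $L=\tfrac12\big(\dot r^2 + r^2\sin^2(\alpha)\,\dot\varphi^2\big)$. Since $\varphi$ is a cyclic variable, the momentum $c:=\partial L/\partial\dot\varphi = r^2\sin^2(\alpha)\,\dot\varphi$ is conserved along $\gamma$. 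The only point requiring any care here is that $c$ is well defined even though $C_\alpha$ is not simply connected and $\varphi$ is merely a local coordinate: the one-form $d\varphi$ is globally defined on $C_\alpha$, so $r^2\sin^2(\alpha)\,\dot\varphi$ has a global meaning along $\gamma$, and being locally constant on the connected interval $(0,1)$ it is in fact constant there.

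Next I would invoke the arc-length normalization $\dot r^2 + r^2\sin^2(\alpha)\,\dot\varphi^2 = 1$. Because $\partial_r$ is a $g_\alpha$-unit vector field and $g_\alpha(\dot\gamma,\partial_r)=\dot r$, the hypothesis $-1<g_\alpha(\dot\gamma,\partial_r)<1$ at some parameter $s_0$ says exactly that $\dot r(s_0)^2<1$, whence $r(s_0)^2\sin^2(\alpha)\,\dot\varphi(s_0)^2>0$ and therefore $c\neq 0$. Substituting $\dot\varphi = c/(r^2\sin^2\alpha)$ back into the arc-length identity yields, along all of $\gamma$,
\[
\dot r^2 = 1 - \frac{c^2}{r^2\sin^2(\alpha)} \ge 0,
\]
so $r(s) \ge |c|/\sin(\alpha)=:d_0>0$ for every $s$. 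To conclude I would use the elementary fact that on $\overline{C}_\alpha$ the radial coordinate equals the distance to the vertex, $d(\gamma(s),v)=r(s)$ (any path reaching $v$ has length at least $\int|\dot r|$, and the radial segment attains this bound); hence the support of $\gamma$ is disjoint from the metric ball $B_{d_0}(v)$. Contrapositively, a smooth geodesic meeting every neighborhood of $v$ must have $c=0$, i.e. $\dot\varphi\equiv 0$, i.e. be radial.

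As for alternatives, one can equally argue by developing the cone: away from $v$ the surface $\overline{C}_\alpha$ is flat, and the substitution $\psi=\varphi\sin\alpha$ turns $g_\alpha$ into the standard polar form $dr^2+r^2\,d\psi^2$ on a sector of total angle $2\pi\sin\alpha$, so that geodesics of $C_\alpha$ develop locally onto straight segments of $\mathbb{R}^2$; a straight line through the origin is radial, while one at positive distance from the origin stays at that distance, and this distance is again recognized as $r$. The two routes amount to the same computation. In either case I do not expect a serious obstacle: the statement is of routine nature, and the only things that genuinely have to be handled with care are the global well-definedness of the Clairaut constant on the non-simply-connected $C_\alpha$ (respectively, the bookkeeping of the angular variable when a geodesic winds around the cone) and the identification of the radial coordinate with $d(\cdot,v)$.
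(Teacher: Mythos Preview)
Your proposal is correct and is precisely the argument the paper has in mind: the paper does not spell out a proof but simply introduces the lemma as ``a consequence of the Clairaut equation expressing the conservation of angular momentum for geodesics on surfaces of revolution,'' and your computation with the conserved quantity $c=r^2\sin^2(\alpha)\,\dot\varphi$ is exactly that. Your alternative via developing the cone onto a flat wedge is also the viewpoint the paper records in Remark~\ref{rem:flatwedge}.
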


            			For a fixed couple of antipodal points at unit distance from the vertex (namely: $d=1$) we shall denote by $\Gamma', \Gamma''$ the geometric support of  the two smooth connecting geodesics and by $\Gamma'''$ the geometric support of the singular geodesic passing through the vertex of the cone.
            			
            			\begin{rmk}\label{rem:flatwedge} Let $l$ be a linear ray in $C_{\alpha}$, that is to say a subset of the form $\left\{\varphi=\overline{\varphi}\right\}$ for some fixed $\overline{\varphi}\in S^1$. One can then consider on $C_{\alpha}\setminus\left\{l\right\}$ the standard planar polar coordinates $(\rho,\vartheta)\in (0,+\infty)\times (0,2\pi\sin\alpha)$ which are obtained by unfolding $C_{\alpha}\setminus\left\{l\right\}$ on a (flat) plane. In particular, the associated map $G:(0,+\infty)\times (0,2\pi\sin\alpha) \to C_{\alpha}$ is in fact an isometry. Furthermore, we can identify the whole $C_{\alpha}$ with the Euclidean wedge $(0,+\infty)\times [0,2\pi\sin\alpha]$ after pointwise identifying the two edges.
            			\end{rmk}
            			
            			\subsection{Asymptotically Conical Surfaces} \label{subs:acs}
            			
            			\begin{defi}\label{def:acon}
            				A complete (non-compact) surface $(S,g)$ is called asymptotically conical if there exists a compact set $Z\subset S $, and a diffeomorphism $\Phi: S\setminus Z \to \mathbb{R}^2\setminus \left\{0\right\}$ such that (endowed $\mathbb{R}^2\setminus \left\{0\right\}$ with coordinates $(r,\varphi)$ as above)
            				\begin{multline*}
            				(\Phi^{-1})^{\ast}g= \left(1+e_{rr}(r,\varphi)\right)dr\otimes dr+ \left(1+e_{\varphi\varphi}(r,\varphi) \right)r^2\sin^2(\alpha)d\varphi\otimes d\varphi \\
            				+ 2e_{r\varphi}(r,\varphi)rdr\otimes d\varphi
            				\end{multline*}
            				for a symmetric $(0,2)$-tensor $e$ satisfying
            				\[
            				e_{rr}(r,\varphi)=O_2(r^{-\mu}), \ e_{\varphi\varphi}(r,\varphi)=O_2(r^{-\mu}), \ e_{r\varphi}(r,\varphi)=O_2(r^{-\mu})
            				\]
            				as we let $r\to+\infty$.
            				We call $\alpha\in (0,\pi/2]$ the asymptotic angle and $\mu>0$ the asymptotic decay rate of the surface $(S,g)$.
            			\end{defi}	
            			
            			\begin{rmk}\label{rem:wedgecoor1}
            				When writing $e(r,\varphi)=O_2(r^{-\mu})$ we mean that
            				\[
            				\partial^{\beta}e(r,\varphi)=O(r^{-\mu-|\beta|_r}), \ \ r\to+\infty
            				\]	
            				for any multi-index $\beta$ such that $0\leq |\beta|\leq 2$ and for $|\beta|_{r}$ equal to the number of differentiations in the variable $r$.
            			\end{rmk}

            			\begin{defi}In the setting of the above definition we will call the couple $(r,\varphi)$ asymptotically conical coordinates for $(S,g)$. Fixing such a structure at infinity, we shall say that two points $p, q\in S\setminus Z$ are antipodal if there exist $(r_0,\varphi_0)$ such that
            				\[
            				(r,\varphi)(p)=(r_0,\varphi_0) \ \textrm{and} \ (r,\varphi)(q)=(r_0,\varphi_0+\pi).
            				\]
            			\end{defi}

            			\begin{rmk}\label{ref:wedgecoor2}
            				in Section \ref{sec:bangert}, it will be convenient to work with \textsl{wedge} coordinates for an asymptotically conical surface $(S,g)$. These are defined in analogy with Remark \ref{rem:flatwedge} and are uniquely determined, once a structure at infinity $(r,\varphi)$ is assigned, by means of the equations
            				$\rho=r,  \vartheta=\varphi \sin\alpha. $
            			\end{rmk}

            		\noindent	The following assertion is a straightforward consequence of Definition \ref{def:acon}.
            			
            			\begin{lem}
            				Given an asymptotically conical surface $(S,g)$ of asymptotic angle $\alpha$ and fixed a structure at infinity $(r,\varphi)$ we consider for a positive parameter $\lambda$ the rescaled metric
            				\[
            				g^{(\lambda)}(r,\varphi)=\lambda^{-2}\textbf{Dil}^{\ast}_{\lambda}g\left( r,\varphi\right)
            				\]	
            				where $\textbf{Dil}_{\lambda}:\mathbb{R}^2\setminus\left\{0\right\}\to\mathbb{R}^2\setminus\left\{0\right\}$ is the diffeomorphism defined by $\textbf{Dil}_{\lambda}(r,\varphi)=(\lambda r,\varphi)$.
            				Then: given any sequence $\left\{\lambda_n\right\}$ such that $\lambda_n\uparrow +\infty$ the sequence of Riemannian manifolds $\left(\mathbb{R}^2\setminus\left\{0\right\}, g^{(\lambda_n)}\right)$ converges (locally uniformly in the $C^2$-topology) to the cone $C_{\alpha}$.
            			\end{lem}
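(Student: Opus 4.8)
The plan is purely computational: I will write the rescaled metric $g^{(\lambda)}$ explicitly in the fixed global coordinate chart $(r,\varphi)$ on $\mathbb{R}^2\setminus\{0\}$ and check that, as $\lambda\to+\infty$, its coefficients converge to those of $g_\alpha$ together with all their partial derivatives up to order two, uniformly on every compact subset; this is exactly the meaning of ``locally uniformly in the $C^2$-topology'' here, and it obviously yields the convergence along any prescribed sequence $\lambda_n\uparrow+\infty$.

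First I would record how the dilation acts: since $\textbf{Dil}_\lambda(r,\varphi)=(\lambda r,\varphi)$, one has $\textbf{Dil}_\lambda^{\ast}(dr)=\lambda\,dr$, $\textbf{Dil}_\lambda^{\ast}(d\varphi)=d\varphi$, and $\textbf{Dil}_\lambda^{\ast}f=f(\lambda r,\varphi)$ for a function $f$; in particular the factor $r^2$ in the angular term pulls back to $\lambda^2 r^2$. Substituting the expression for $(\Phi^{-1})^{\ast}g$ from Definition \ref{def:acon} and multiplying by $\lambda^{-2}$, all the explicit powers of $\lambda$ cancel (one factor $\lambda^2$ from $\textbf{Dil}_\lambda^{\ast}(dr\otimes dr)$, one from $r^2\mapsto\lambda^2 r^2$ in the angular term, and one from $\textbf{Dil}_\lambda^{\ast}(r\,dr\otimes d\varphi)=\lambda^2\,r\,dr\otimes d\varphi$ in the mixed term), leaving
\[
g^{(\lambda)}=\bigl(1+e_{rr}(\lambda r,\varphi)\bigr)\,dr\otimes dr+\bigl(1+e_{\varphi\varphi}(\lambda r,\varphi)\bigr)r^{2}\sin^{2}(\alpha)\,d\varphi\otimes d\varphi+2\,e_{r\varphi}(\lambda r,\varphi)\,r\,dr\otimes d\varphi,
\]
hence
\[
g^{(\lambda)}-g_{\alpha}=e_{rr}(\lambda r,\varphi)\,dr\otimes dr+e_{\varphi\varphi}(\lambda r,\varphi)\,r^{2}\sin^{2}(\alpha)\,d\varphi\otimes d\varphi+2\,e_{r\varphi}(\lambda r,\varphi)\,r\,dr\otimes d\varphi.
\]

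Finally I would estimate on a compact set $K\subset\mathbb{R}^2\setminus\{0\}$, on which $r$ ranges in some $[a,b]$ with $0<a\le b<+\infty$. Each component of $g^{(\lambda)}-g_{\alpha}$, and each of its partial derivatives of order at most two, is a finite sum of terms of the shape $c(r,\varphi)\,\lambda^{|\beta|_{r}}\,(\partial^{\beta}e)(\lambda r,\varphi)$ with $|\beta|\le 2$, where $c$ is a fixed smooth function bounded on $K$ and the power $\lambda^{|\beta|_{r}}$ (with $|\beta|_{r}$ the number of $r$-differentiations) is produced by the chain rule. By the decay bound of Remark \ref{rem:wedgecoor1}, $|(\partial^{\beta}e)(\lambda r,\varphi)|\le C_{K}(\lambda r)^{-\mu-|\beta|_{r}}\le C'_{K}\,\lambda^{-\mu-|\beta|_{r}}$ on $K$, so every such term is $O(\lambda^{-\mu})$ uniformly on $K$; therefore $\|g^{(\lambda)}-g_{\alpha}\|_{C^{2}(K)}=O(\lambda^{-\mu})\to 0$. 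The one point worth flagging — and essentially the only thing that is not mechanical — is the bookkeeping in this last step: the powers of $\lambda$ created by differentiating $e(\lambda r,\cdot)$ in the radial variable are compensated exactly by the extra decay $-|\beta|_{r}$ built into the definition of $O_{2}$, which is precisely the reason that definition is stated the way it is in Remark \ref{rem:wedgecoor1}.
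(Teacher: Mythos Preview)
Your proof is correct and is precisely the direct computation the paper has in mind: the authors do not actually give a proof, stating only that the lemma is ``a straightforward consequence of Definition~\ref{def:acon}.'' Your bookkeeping of how the $\lambda$-powers from the chain rule are cancelled by the extra radial decay $-|\beta|_r$ in the $O_2$ notation is exactly the point of Remark~\ref{rem:wedgecoor1}, so there is nothing to add.
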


            			This lemma characterizes the blow-down limits of minimizing geodesics connecting antipodal points on asymptotically conical surfaces. The relevant notion of convergence is presented in Appendix \ref{sec:geodcurr}: note that, although such notion of convergence allows for multiplicities higher than one in the limit, in our particular case the latter phenomenon is ruled out by Lemma \ref{lem:convint}.
            			
            			\begin{lem}\label{lem:limgeod}(Notations as above). Let $(S,g)$ be an asymptotically conical surface of asymptotic angle $\alpha\in(0,\pi/2)$.
            				\begin{enumerate}
            					\item{For any couple of antipodal points $p, q\in S\setminus Z$ there exists a length-minimizing geodesic $\Gamma$ connecting them.} 
            					\item{Given a sequence of antipodal points $p^{(k)}, q^{(k)}$ with \[
            						r_k:=r(p^{(k)})=r(q^{(k)})\to+\infty
            						\] and denoted by $\Gamma_k$ the support of a length-minimizing geodesic connecting them, then $\left\{\Gamma_k\right\}$ converges geometrically to either $\Gamma'$ or $\Gamma''$ as we rescale by the sequence $\left\{r_k\right\}$. As a result, the sequence of lengths of rescaled $\Gamma_k$ converges to $2\sin\left(\frac{\pi}{2}\sin\alpha\right)$.}
            				\end{enumerate}
            			\end{lem}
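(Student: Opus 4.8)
The plan is to treat the two items separately. Item~(1) requires no asymptotic input: since $(S,g)$ is complete, the Hopf--Rinow theorem provides, for any $p,q\in S\setminus Z$, a length-minimizing geodesic $\Gamma$ (of length $d(p,q)<\infty$), and any shortest path between two distinct points is automatically a simple arc, so $\Gamma$ is embedded. For item~(2) I would argue in three steps: a two-sided length estimate, a ``no-drift'' confinement of the minimizers, and a blow-down/compactness argument identifying the limit against the classification of cone geodesics in Lemma~\ref{lem:geodcon}.

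\emph{Length estimate.} I would first show $r_k^{-1}d(p^{(k)},q^{(k)})\to 2\sin(\tfrac\pi2\sin\alpha)$. For the upper bound, transport through $\Phi^{-1}$ the $r_k$-dilate of one of the two smooth cone geodesics of Lemma~\ref{lem:geodcon} joining $\Phi(p^{(k)})$ to $\Phi(q^{(k)})$; unfolding the cone onto a flat wedge (Remark~\ref{rem:flatwedge}) shows this competitor lies in some region $\{r\ge c(\alpha)\,r_k\}$, where the $O_2(r^{-\mu})$ decay of $e$ yields the comparison $g\le(1+O(r^{-\mu}))g_\alpha$, hence $\mathrm{length}_g\le (1+O(r_k^{-\mu}))\cdot 2r_k\sin(\tfrac\pi2\sin\alpha)$. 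For the lower bound, once one knows (next step) that $\Gamma_k\subset S\setminus Z$, the curve $\Phi(\Gamma_k)\subset\mathbb R^2\setminus\{0\}$ joins the antipodal points $\Phi(p^{(k)}),\Phi(q^{(k)})$, hence has $g_\alpha$-length at least $d_{g_\alpha}=2r_k\sin(\tfrac\pi2\sin\alpha)$ (Lemma~\ref{lem:geodcon}); the reverse comparison $g\ge(1-O(r^{-\mu}))g_\alpha$, valid on $\{r\ge\delta r_k\}$, then closes the estimate.

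\emph{No drift.} This is the step I expect to be the main obstacle: the rescaled metrics $g^{(r_k)}$ degenerate near the vertex, so one cannot blow down there and must instead rule out by hand that the minimizer enters the inner region. The tool is the eikonal-type bound $|\nabla r|_g=1+O(r^{-\mu})$, which forces any path lying in $\{r\ge R_\varepsilon\}$ to have $g$-length at least $(1+\varepsilon)^{-1}$ times the total variation of its radial coordinate; combined with $\mathrm{diam}_g(Z)<\infty$, this shows that a minimizer $\Gamma_k$ reaching radius $\delta r_k$ would have length at least $(1-o(1))\cdot 2(1-\delta)r_k$, and one reaching radius $Rr_k$ length at least $(1-o(1))\cdot 2(R-1)r_k$. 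Since by the previous step the length is $(2\sin(\tfrac\pi2\sin\alpha)+o(1))r_k$ and $\sin(\tfrac\pi2\sin\alpha)<1$ (here the hypothesis $\alpha<\pi/2$ is used in an essential way), choosing $\delta>0$ small and $r_k$ large forces $\Gamma_k\subset\{\delta r_k\le r\le 2r_k\}$, which also closes the lower bound above.

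\emph{Blow-down and identification.} Rescaling by $r_k$, the curves $\tilde\Gamma_k$ become $g^{(r_k)}$-geodesics, parametrized by $g^{(r_k)}$-arclength, confined to the fixed compact annulus $K=\{\delta\le r\le 2\}$, with lengths $L_k\to 2\sin(\tfrac\pi2\sin\alpha)$. By the blow-down lemma above, $g^{(r_k)}\to g_\alpha$ in $C^2_{\mathrm{loc}}$, so the Christoffel symbols converge in $C^1$ on $K$ and the geodesic equation yields a uniform $C^{1,1}$ bound on $\tilde\Gamma_k$; by Arzel\`a--Ascoli a subsequence converges in $C^1$ to an arclength-parametrized geodesic $\tilde\Gamma_\infty$ of $C_\alpha$ joining a pair of antipodal points at unit distance from the vertex, of length exactly $2\sin(\tfrac\pi2\sin\alpha)$. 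By Lemma~\ref{lem:geodcon} the only such geodesics are the two smooth ones --- the singular one being both longer and incident to the vertex --- so $\tilde\Gamma_\infty$ is $\Gamma'$ or $\Gamma''$; moreover, length being continuous along this convergence, no mass is lost in the limiting geodesic current, whence multiplicity one (cf. Lemma~\ref{lem:convint} and the framework of Appendix~\ref{sec:geodcurr}). As the argument applies along every subsequence, every geometric subsequential limit of $\{\tilde\Gamma_k\}$ is $\Gamma'$ or $\Gamma''$, and in particular $\mathrm{length}(\tilde\Gamma_k)=r_k^{-1}\mathrm{length}(\Gamma_k)\to 2\sin(\tfrac\pi2\sin\alpha)$, as claimed.
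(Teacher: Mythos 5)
Your proof is correct and follows essentially the same route as the paper: existence via completeness, confinement of the minimizers to a coordinate annulus $\{\delta r_k\le r\le 2r_k\}$ by comparing their length (bounded above using a cone-geodesic competitor) against the cost of radial excursions, and then a blow-down to $C_\alpha$ identifying the limit via Lemma~\ref{lem:geodcon}. The paper compresses the no-drift step into a ``trivial length comparison'' and invokes a variant of Lemma~\ref{lem:convint} for the compactness; you have merely made these steps explicit.
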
	
            			
            			\begin{proof}
            				To prove the first assertion, let us start by observing that there exists a connecting path of length equal half of the circle of coordinate equation $r=r(p)=r(q)$ as we let the variable $\varphi$ vary in an interval of size $\pi$: thus such path has length bounded from above by a fixed constant $C>1$ (depending on the part $e$ of the metric $g$) times $\pi r \sin\alpha$. It follows that any sequence of paths connecting $p$ to $q$ and minimizing length has to be contained inside the coordinate ball of radius  $2C\pi r$. Hence direct methods ensure the existence of such a minimizer. Furthermore, let us explicitly notice that a trivial length comparison argument ensures that the support of $\Gamma$ must be disjoint from the coordinate ball of radius $r_{in}=\left(1-\delta\sin\left(\frac{\pi}{2}\sin\alpha\right)\right)r$, at least for $r$ large enough, for $\delta>1$ chosen once and for all so that $\left(1-\delta\sin\left(\frac{\pi}{2}\sin\alpha\right)\right)>0$.  \\
            				\indent For the second part: as we rescale and take the limit, thanks to the last remark, a standard variation\footnote{For the sake of clarity, we stated the convergence results in Appendix \ref{sec:geodcurr} with respect to a fixed background metric, but the same conclusion does hold true if the manifold $N$ is endowed with a sequence $\left\{g_k\right\}$ of Riemannian metrics that are smoothly converging, uniformly on compact sets of the ambient manifold.}of Lemma \ref{lem:convint} ensures that the sequence $\left\{\Gamma_k\right\}$ will geometrically subconverge to a geodesic on $C_{\alpha}$ hence (by virtue of Lemma \ref{lem:geodcon}) either to $\Gamma'$ or to $\Gamma''$, which completes the proof.
            			\end{proof}

            			\subsection{Positive Mass Theorem in 2+1 gravity}\label{subs:pmt2}
            			
            			As anticipated in the introduction, it is customary in $2+1$ gravity to call \textsl{mass} of an asymptotically conical\footnote{Notice that we could legitimately call this class of spaces \textsl{asymptotically flat}, coherently with the higher-dimensional terminology.}surface $(S,g)$ the angle defect for parallel transport around the limit cone to which $(S,g)$ asymptotes at infinity, namely we shall set
            			\[
            			m= 2\pi(1-\sin\alpha).
            			\]
            			if $\alpha$ is the asymptotic angle of $(S,g)$ in the sense of Definition \ref{def:acon}.
            			This can be fully justified in the context of the Hamiltonian formulation of General Relativity in $2+1$-dimensions, following the same conceptual scheme described by Arnowitt-Deser-Misner \cite{ADM59} when dealing with at least three spatial dimensions (see \cite{Bar86} for a mathematical discussion of the well-posedness of this notion). We refer the reader to Chapter 1 of the lectures by P. Chru\'sciel \cite{Chr10} for a modern, broad treatment of these topics. In that context, we remind the reader that the assumption that the scalar curvature be non-negative is nothing but the aforementioned \textsl{dominant energy condition} (see e. g. \cite{Wal84}).
            			
            			\begin{thm}\label{lem:pmt2}
            				Let $(S,g)$ be an asymptotically conical surface of non-negative scalar curvature. Then
            				$S$ is diffeomorphic to $\mathbb{R}^2$ and, furthermore, $m=0$ if and only if $(S,g)$ is isometric to the Euclidean plane.
            			\end{thm}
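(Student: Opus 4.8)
The plan is to prove this as a two-dimensional analogue of the Positive Mass Theorem, exploiting Gauss--Bonnet together with the rigidity in the equality case. First I would address the topology: the claim that $S$ is diffeomorphic to $\mathbb{R}^2$. By the asymptotic structure in Definition \ref{def:acon}, $S$ has exactly one end, which is diffeomorphic to $\mathbb{R}^2\setminus\{0\}$. So $S$ is an orientable (this follows from the coordinates near infinity, or should be assumed) surface with one end; it remains to show the genus is zero. Here I would invoke the Cohn-Vossen inequality: for a complete surface of finite topological type, $\int_S K\, dA \le 2\pi\chi(S)$, where $\chi(S) = 2 - 2\mathfrak{g} - (\text{number of ends})= 1 - 2\mathfrak{g}$ for one end. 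Since $K = \frac{1}{2}\,(\text{scalar curvature}) \ge 0$, we get $0 \le \int_S K\,dA \le 2\pi(1-2\mathfrak{g})$, forcing $\mathfrak{g}=0$, hence $S \cong \mathbb{R}^2$.

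**Next**, for the mass inequality $m \ge 0$: I would compute the total curvature more precisely. Using the asymptotically conical coordinates, on the region $\{r \le R\}$ Gauss--Bonnet gives $\int_{\{r\le R\}} K\,dA + \int_{\{r=R\}}\kappa_g\,ds = 2\pi$, where $\kappa_g$ is the geodesic curvature of the coordinate circle $r=R$. The decay assumptions $e = O_2(r^{-\mu})$ let me estimate $\int_{\{r=R\}}\kappa_g\,ds \to 2\pi\sin\alpha$ as $R \to +\infty$ — indeed for the exact cone $C_\alpha$ a circle $r = R$ has length $2\pi R\sin\alpha$ and geodesic curvature $1/R$, giving $\int \kappa_g\,ds = 2\pi\sin\alpha$, and the $O_2(r^{-\mu})$ error terms contribute a quantity vanishing as $R\to\infty$. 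Hence $\int_S K\,dA = 2\pi(1-\sin\alpha) = m$. Since $K \ge 0$, this already gives $m \ge 0$; combined with $\alpha \in (0,\pi/2]$ this is automatic, so the real content is the equality case.

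**For rigidity**, suppose $m = 0$, i.e. $\alpha = \pi/2$, so $S$ is asymptotic to the flat plane. Then $\int_S K\,dA = 0$ with $K \ge 0$ forces $K \equiv 0$, so $(S,g)$ is a complete, flat, simply connected surface, hence its universal cover (itself) is isometric to a region of $\mathbb{R}^2$; being complete, it is isometric to all of $\mathbb{R}^2$. (One must be slightly careful: a complete flat simply connected surface is $\mathbb{R}^2$ by the Cartan--Hadamard theorem applied with zero curvature, or directly because the developing map into $\mathbb{E}^2$ is then a covering, hence a global isometry.) Conversely, if $(S,g)$ is the Euclidean plane then $\alpha = \pi/2$ and $m=0$. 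I expect the main obstacle to be the careful justification that the boundary integral $\int_{\{r=R\}}\kappa_g\,ds$ converges to $2\pi\sin\alpha$ under only $C^2$-decay with rate $\mu > 0$ — one needs that the metric perturbation and its first derivatives enter $\kappa_g$ in a way that is controlled by $O(r^{-\mu})$ terms integrated against a length element $O(r)$, so that the product is $O(r^{-\mu}) \cdot O(R) \cdot$ (the $d\varphi$-measure of total mass $O(1)$), which does \emph{not} obviously vanish unless one is careful about which combinations of $e$ and $\partial e$ actually appear. A clean way around this is to write Gauss--Bonnet on the annulus $\{R_1 \le r \le R_2\}$ and use that $\int_{\{R_1 \le r \le R_2\}} K\,dA \to 0$ as $R_1 \to \infty$ (tail of a convergent integral) to conclude that $\int_{\{r=R\}}\kappa_g\,ds$ is Cauchy, and then identify its limit by the exact-cone computation plus the pointwise $C^1$-convergence of $g$ to $g_\alpha$ after rescaling (Lemma immediately above).
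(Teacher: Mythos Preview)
Your proposal is correct and the rigidity half matches the paper's argument essentially verbatim: both compute $\int_{\{r=R\}}\kappa_g\,ds\to 2\pi\sin\alpha$, deduce $\int_S K_g = 2\pi(1-\sin\alpha)=m$, and conclude $K_g\equiv 0$ when $m=0$.

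The topology argument is where you diverge. You invoke Cohn--Vossen ($\int_S K\le 2\pi\chi(S)$) to force $\mathfrak g=0$; the paper instead applies Gauss--Bonnet directly to the compact region $D_{r_0}=\{r\le r_0\}$, observes that both $\int_{D_{r_0}}K_g\ge 0$ and $\int_{\partial D_{r_0}}\kappa_g>0$ for large $r_0$, and reads off $\chi(D_{r_0})\ge 1$, hence $\chi(D_{r_0})=1$ and $D_{r_0}$ is a disk. The paper's route is slightly more self-contained (no appeal to Cohn--Vossen, and orientability drops out for free since a compact surface with one boundary circle and $\chi=1$ is a disk), while yours is perhaps more conceptual but requires you to first check that $K_g$ is integrable (which does follow from $|K_g|\le Cr^{-2-\mu}$) and to say a word about orientability. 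Interestingly, the paper explicitly remarks that your route---via Cohn--Vossen/Cheeger--Gromoll---is a known alternative.

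Your extended worry about the convergence of $\int_{\{r=R\}}\kappa_g\,ds$ is more cautious than necessary here: the paper simply asserts it as a straightforward consequence of the $O_2(r^{-\mu})$ decay, and indeed a direct computation of $\kappa_g$ for the coordinate circle shows that the error relative to the cone value is $O(r^{-\mu})$ pointwise, integrated against a length element of size $O(r)\,d\varphi$ with $d\varphi$-measure $2\pi$, giving a total error $O(r^{-\mu})\to 0$. Your annulus/Cauchy workaround is a nice alternative but not needed.
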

            			
            			We present the (easy) proof of this result both for the sake of completeness and due to the absence (to our knowledge) of a standard reference. Yet, we shall remark that the first assertion follows at once from Theorem 1 in \cite{CG72} (such assertion for surfaces being in fact due to S. Cohn-Vossen).
            			
            			\begin{proof}
            				For a given, large $r_0$ we let $D_{r_0}$ be the bounded domain whose boundary is given by the circle $r=r_0$ in our usual coordinate notation. The Gauss-Bonnet theorem, applied to $D_{r_0}$ reads
            				\[
            				\int_{D_{r_0}}K_g +\int_{\partial D_{r_0}}\kappa_g =2\pi \chi(D_{r_0})
            				\]
            				where $\chi(D_{r_0})$ stands for the Euler characteristic of the domain in question. Now, it is straightforward to check that our decay assumptions on the metric $g$ imply 
            				\[
            				\int_{\partial D_{r_0}}\kappa_g =2\pi \sin(\alpha) (1+o(1))
            				\]
            				which is strictly positive for $r_0$ large enough (since by definition $\alpha\in (0,\pi/2]$). Hence, due to the fact that of course $\int_{D_{r_0}}K_g\geq 0$ we deduce that $\chi(D_{r_0})=1$ for all sufficiently large $r_0$ and thus $S$ is diffeomorphic to a plane. Concerning the second assertion: if $m=0$ then letting $r_{0}\to +\infty$ in the equation above implies that for any given (large) $r_0$
            				\[
            				\int_{D_{r_0}}K_g=0
            				\]
            				and thus the assumption $K_g\geq 0$ forces the Gauss curvature of $(S,g)$ to vanish at every point. The conclusion follows at once.
            			\end{proof}
            			
            			The theorem above ensures that, whenever assuming non-negative scalar curvature, the (a priori restrictive) assumption $\alpha<\pi/2$ only rules out Euclidean $\mathbb{R}^2$, in which case the conclusion of our main theorem is trivial. Also, notice that for $\alpha\neq \pi/2$ the conclusions of Lemma \ref{lem:limgeod} apply, which will turn to be extremely relevant for the arguments we are about to present. 
            			
            			\section{Min-max embedded geodesic segments}\label{sec:finscale}
            			
            			As described in the introduction, we shall present here a general existence theorem for min-max embedded geodesic segments. To state our results, we need to introduce some notation. \\
            			
            			\indent Throughout this section, we let $(N,g)$ be a complete Riemannian manifold of dimension two, without boundary. Given two points $p, q$ with $p\neq q$ we assume the existence of two embedded geodesics connecting them: let us denote by $\gamma_1:[0,1]\to N$ (resp. $\gamma_2:[0,1]\to N$) a parametrization of the first (resp. the second) of them by a constant multiple of the corresponding arc-length paramater. 
            			We further assume that the closed domain $\overline{\Omega}$ bounded by $\Gamma_1:=\textrm{spt}(\gamma_1)$ and $\Gamma_2:=\textrm{spt}(\gamma_2)$ is $C^1$-diffeomorphic to the upper half-disk in $\mathbb{R}^2\simeq\mathbb{C}$: namely there is a map $\Phi: \mathbb{D}_{+}\to \overline{\Omega}$ which is a proper diffeomorphism of class $C^1$ (the regularity of the map being understood in the sense of restriction of a $C^1$ map on open neighborhoods of $\mathbb{D}_{+}$ and $\overline{\Omega}$) for $\mathbb{D}_{+}=\left\{z\in\mathbb{C} \, : \ |z|\leq 1, \textrm{Im}(z)\geq 0 \right\}$.
            			Let then
            			\[
            			X:=\left\{\gamma\in  W^{1,2}([0,1], N) \, : \ \gamma(0)=p, \gamma(1)=q  \right\}
            			\]
            			and
            			\[
            			\Sigma:= \left\{ H\in C([0,1], X) \, : \  H(0)=\gamma_1, H(1)=\gamma_2 \right\}.
            			\]
            			The previous assumption concerning the region $\Omega$ ensures that the class $\Sigma$ is not empty.
            			
            			Thus, we shall introduce the min-max value
            			\[
            			\Lambda:= \inf_{H\in\Sigma}\max_{s\in [0,1]} E(H(s))
            			\]
            			where for an element $\gamma\in X$ 
            			\[
            			E(\gamma)=\int_{0}^{1}g(\dot{\gamma}(t),\dot{\gamma}(t))\,dt
            			\]
            			is the standard energy functional on curves (see Appendix \ref{sec:geodcurr} for further details and a recollection of some basic facts). In the setting above, we let $\textrm{Crit}(E)\subset X$ denote the set of critical points for $E$ (geodesics parametrized by a constant multiple of the arc-length). Throughout this section we set $I=[0,1]$.

            			\begin{prop}\label{prop:finscale}
            				Let $(N,g)$ be a complete Riemannian manifold of dimension two, without boundary, and for given distinct points $p, q$ assume that there exist two parametrized embedded geodesics $\gamma_1, \gamma_2: I\to N$ bounding a half-disk-type region (in the sense explained above), satisfying $\gamma_1(0)=\gamma_2(0)=p, \gamma_1(1)=\gamma_2(1)=q$, and such that the mountain-pass condition
            				\[
            				\Lambda> \max\left\{E(\gamma_1), E(\gamma_2)\right\}
            				\]
            				holds. Then there exists a parametrized embedded geodesic $\gamma_3:I\to N$, whose endpoints are $p, q$ and whose energy equals the value $\Lambda$. Furthermore, $\gamma_3$ has Morse index less or equal than one (as a critical point of the energy functional).
            			\end{prop}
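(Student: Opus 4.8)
The plan is to run a one-dimensional min–max scheme for the energy functional $E$ on the path space $X$, using the mountain-pass geometry guaranteed by the hypothesis $\Lambda>\max\{E(\gamma_1),E(\gamma_2)\}$, and then to upgrade the resulting critical point to an \emph{embedded} geodesic by exploiting the Chambers–Liokumovich resolution-of-singularities procedure together with the half-disk structure of $\overline\Omega$. The first step is to establish a Palais–Smale-type condition and a deformation lemma for $E$ on $X$. Since $X=\{\gamma\in W^{1,2}(I,N):\gamma(0)=p,\gamma(1)=q\}$ is a Hilbert manifold on which $E$ is smooth and satisfies Condition (C) (this is completely classical, going back to Palais–Smale; the only mild care needed is that $N$ is complete and that minimizing sequences for the min–max value stay in a compact region — which follows because competitors can be taken with energy close to $\Lambda$, hence with uniformly bounded length, hence contained in a fixed compact set by completeness). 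Standard critical point theory then produces a critical point $\gamma_3\in\mathrm{Crit}(E)$ with $E(\gamma_3)=\Lambda$, and — because $\Lambda$ is a genuine mountain-pass level, i.e. the min–max is over a one-parameter family separating $\gamma_1$ from $\gamma_2$ — the minimax principle of the form due to (e.g.) Ghoussoub or the classical index estimates of Marino–Prodi type give a critical point of Morse index at most one. I would quote the relevant version of this ``one-dimensional min–max with index bound'' (for geodesics this is well documented, e.g. in the work on closed geodesics) rather than reprove it.

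\textbf{The hard part} is the embeddedness, and this is where the half-disk hypothesis and the Chambers–Liokumovich machinery enter. The issue is that an arbitrary min–max critical point need not be simple, and — worse — the deformations used in the min–max argument a priori only produce homotopies of paths $H(s)$, which can create and destroy self-intersections freely; a naive argument gives only an immersed geodesic. The strategy, following \cite{CL14}, is to perform the min–max \emph{inside the class of sweepouts by embedded curves contained in $\overline\Omega$}, converting the a priori homotopies into ambient isotopies via their ``squeezing''/resolution argument, so that the width computed over embedded sweepouts coincides with $\Lambda$ and the critical point extracted is automatically embedded. Concretely, I would: (i) observe that the half-disk diffeomorphism $\Phi:\mathbb D_+\to\overline\Omega$ provides a canonical sweepout of $\overline\Omega$ by embedded arcs from $p$ to $q$ (the images of the horizontal chords, say), realizing an element of $\Sigma$ whose max energy I can then try to push down; (ii) along the $H^1$-gradient flow of $E$, note that the flow of a curve lying in $\overline\Omega$ stays (for the purposes of the argument, after a cut-and-paste at the boundary geodesics $\Gamma_1,\Gamma_2$, which are themselves geodesics hence flow-invariant up to reparametrization) within a controlled region; (iii) apply the Chambers–Liokumovich resolution of singularities to replace the (possibly non-embedded) intermediate curves produced by the flow with embedded ones without increasing the energy by more than $\varepsilon$, obtaining an embedded sweepout with max energy $\le\Lambda+\varepsilon$; (iv) let $\varepsilon\to 0$ and extract, via the deformation lemma applied in the embedded class, an embedded geodesic $\gamma_3$ at level $\Lambda$ whose non-degenerate directions are controlled, giving Morse index $\le 1$.

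\textbf{The main technical obstacle} I anticipate is step (ii)–(iii): ensuring that the resolution-of-singularities procedure is compatible with the \emph{fixed endpoint constraint} $\gamma(0)=p$, $\gamma(1)=q$ and with the energy functional (rather than length), and that one does not lose the index bound when passing to the embedded class. Chambers–Liokumovich work with length and with closed curves (or curves with free boundary), so some care is needed to adapt their ``squeeze'' to arcs with two fixed endpoints lying on the boundary geodesics of a half-disk; the point is that near $p$ and $q$ all competing arcs emanate into $\overline\Omega$, and the local picture is that of arcs in a half-disk with a common endpoint on the flat part of the boundary, where the resolution is a local move. The equivalence of the energy and length min–max values is handled as usual: reparametrization by constant-speed does not increase energy and any almost-minimizing sweepout can be taken constant-speed, so $\Lambda^{\mathrm{energy}}=(\Lambda^{\mathrm{length}})^2$ up to the standard normalization, and a critical point of $E$ is a constant-speed geodesic. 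I would organize the write-up so that the classical min–max/index part is dispatched by citation, and the bulk of the argument is the careful bookkeeping showing the embedded width equals $\Lambda$.
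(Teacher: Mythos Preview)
Your overall strategy coincides with the paper's: run the one-parameter min-max for $E$ on $X$, invoke Chambers--Liokumovich to replace a minimizing sequence of homotopies by a minimizing sequence of \emph{isotopies} (after first arranging, by a transversality argument, that generically no self-intersections occur near the fixed endpoints $p,q$), extract a converging min-max sequence via Palais--Smale, and get the index bound from Ghoussoub. Two points are worth sharpening, however.

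First, drop the restriction to $\overline{\Omega}$ and the ``cut-and-paste at $\Gamma_1,\Gamma_2$'' in your step (ii). The paper works in the full path space $X$ with no confinement to the half-disk; the half-disk hypothesis is used only to guarantee that $\Sigma$ is nonempty. Trying to keep the sweepout inside $\overline{\Omega}$ under the $H^1$-gradient flow is neither true in general nor needed, and the cut-and-paste you allude to would jeopardize both the energy control and the index argument.

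Second, your sentence ``the critical point extracted is automatically embedded'' hides the actual mechanism, which is short but should be made explicit: once the minimizing sequence consists of isotopies, Lemma-type Palais--Smale gives a min-max sequence of \emph{embedded} curves converging in $W^{1,2}$ (hence in $C^0$) to a geodesic $\gamma_3$; any self-intersection of $\gamma_3$ would be transverse by ODE uniqueness, and a transverse crossing is $C^0$-stable, forcing the nearby embedded approximants to self-intersect --- a contradiction. This is exactly how the paper closes the embeddedness argument, and it replaces the vaguer ``deformation lemma applied in the embedded class'' you propose in step (iv).
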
		
            			
            			This result would be standard if we removed the word \textsl{embedded} from the conclusion of our statement. Instead, the requirement that the third geodesic segment that we produce has no self-intersections imposes some non-trivial work, for which we shall mostly rely on the methods recently introduced in \cite{CL14}. Proposition \ref{prop:finscale} will in fact easily follow given the two ancillary lemmata that we are about to state.
            			
            			Following standard terminology in min-max theory (see e. g. \cite{CDL03}) we shall remind the reader that a sequence $\left\{H_n\right\}\in \textrm{Seq}(\Sigma)$ is called \textsl{minimizing} if 
            			\[
            			\sup_{s\in [0,1]} E(H_n(s)) \to \Lambda, \ \textrm{as} \ n\to\infty
            			\]
            			and that, in such case, a sequence $\left\{\gamma_n\right\}\in \textrm{Seq}(X)$ for $\gamma_n:=H_n(s_n)$ is called \textsl{min-max} if
            			\[
            			E(\gamma_n)\to \Lambda, \ \textrm{as} \ n\to\infty.
            			\]
            			
            			The first result is fairly basic and ensures that given a minimizing sequence one can always extract an associated min-max sequence converging (in $X$ and hence smoothly) to a stationary point (in other words: that there exists a stationary element among its limit points).
            			
            			\begin{lem}\label{lem:critlev}
            				(Setting as above). Given a minimizing sequence $\left\{H_n\right\}\in \textrm{Seq}(\Sigma)$, there exist an associated min-max sequence $\left\{H_n(s_n)\right\}$ and $\gamma_{\infty}\in \textrm{Crit(E)}\subset X$ such that $H_n(s_n)\to \gamma_{\infty}$ in $X$.
            			\end{lem}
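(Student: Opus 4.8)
The plan is to run the classical mountain–pass deformation argument in the Hilbert–manifold setting, the only genuinely new point being the bookkeeping that keeps the deformed homotopies inside $\Sigma$. First I would record two structural facts about the variational framework, both discussed in Appendix \ref{sec:geodcurr}. The energy $E$ is a smooth functional on the Hilbert manifold $X=W^{1,2}(I,N)$, with $dE(\gamma)[v]=2\int_0^1 g(\nabla_t v,\dot\gamma)\,dt$ on fields $v$ vanishing at the endpoints, whence $\|dE(\gamma)\|_\gamma\le 2\,E(\gamma)^{1/2}$; and $E$ satisfies the Palais--Smale condition, i.e. any $\{\eta_k\}\subset X$ with $\sup_k E(\eta_k)<\infty$ and $\|dE(\eta_k)\|_{\eta_k}\to 0$ has a subsequence converging in $X$ to an element of $\mathrm{Crit}(E)$. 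The reason Palais--Smale holds is that $\mathrm{length}(\eta)\le E(\eta)^{1/2}$ and $\eta(0)=p$, so any curve of energy at most $C$ is valued in the fixed compact set $\mathcal K:=\overline{B_{\sqrt{C}}(p)}$ (completeness and Hopf--Rinow); one then extracts weak $W^{1,2}$-- and strong $C^0$--limits and upgrades to strong $W^{1,2}$--convergence through the geodesic equation. I would also note $\Lambda<+\infty$, since pushing forward a sweepout of $\mathbb{D}_{+}$ by arcs from $-1$ to $1$ through the $C^1$--diffeomorphism $\Phi$ produces an element of $\Sigma$ of finite maximal energy.

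By Palais--Smale it now suffices to exhibit indices $s_n\in I$ with $E(H_n(s_n))\to\Lambda$ and $\|dE(H_n(s_n))\|_{H_n(s_n)}\to 0$; a subsequence of $\{H_n(s_n)\}$ then converges in $X$ to some $\gamma_\infty\in\mathrm{Crit}(E)$, necessarily with $E(\gamma_\infty)=\Lambda$, which is the claim. Suppose no such choice is possible. Since $\max_s E(H_n(s))\to\Lambda$, a routine extraction argument (if the contrary held, a suitable subsequence would already be a min-max sequence of the required type) produces $\varepsilon_0>0$ and $n_0$ such that, for all $n\ge n_0$, every $s$ with $E(H_n(s))\ge\Lambda-\varepsilon_0$ obeys $\|dE(H_n(s))\|_{H_n(s)}\ge\varepsilon_0$; after shrinking $\varepsilon_0$ we may also assume $\max\{E(\gamma_1),E(\gamma_2)\}<\Lambda-\varepsilon_0$, and this is precisely where the mountain-pass hypothesis is used. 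Let $\phi^t$ be the flow on $X$ of the vector field $-\chi(E(\cdot))\,\nabla E$, where $\nabla E$ is the $H^1$--gradient of $E$ and $\chi$ a Lipschitz cut-off with $\chi\equiv 1$ on $\{E\ge\Lambda-\varepsilon_0/2\}$ and $\chi\equiv 0$ on $\{E\le\Lambda-\varepsilon_0\}$. By $\|dE\|\le 2E^{1/2}$ this field is bounded and locally Lipschitz, and since $E$ is non-increasing along $\phi^t$ the flow is complete and confines curves to a fixed compact portion of $X$; moreover $\phi^t$ fixes $\gamma_1$ and $\gamma_2$, the field vanishing there.

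Choosing $\tau>0$ small enough (using uniform continuity of the flow on bounded-energy curves valued in $\mathcal K$, together with continuity of $\eta\mapsto\|dE(\eta)\|$) I can ensure $\|dE(\phi^t(H_n(s)))\|\ge\varepsilon_0/2$ whenever $E(H_n(s))\ge\Lambda-\varepsilon_0$ and $0\le t\le\tau$; then I fix one index $n\ge n_0$ with $\max_s E(H_n(s))\le\Lambda+\tau\varepsilon_0^2/8$ and set $\widetilde H_n(s):=\phi^{\tau}(H_n(s))$. Since $\phi^\tau$ is continuous and fixes $\gamma_1,\gamma_2$, we get $\widetilde H_n\in\Sigma$. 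If $E(H_n(s))<\Lambda-\varepsilon_0$ then $E(\widetilde H_n(s))\le E(H_n(s))<\Lambda-\varepsilon_0$; if $E(H_n(s))\ge\Lambda-\varepsilon_0$ then, as long as $E(\phi^t(H_n(s)))\ge\Lambda-\varepsilon_0/2$ one has $\chi\equiv 1$ and $\tfrac{d}{dt}E(\phi^t(H_n(s)))=-\|\nabla E(\phi^t(H_n(s)))\|^2\le-\varepsilon_0^2/4$, so either the energy falls below $\Lambda-\varepsilon_0/2$ within time $\tau$ or it drops by at least $\tau\varepsilon_0^2/4$ over $[0,\tau]$; in either case $E(\widetilde H_n(s))\le\Lambda-c$ with $c:=\min\{\varepsilon_0/2,\tau\varepsilon_0^2/8\}>0$. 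Hence $\max_s E(\widetilde H_n(s))\le\Lambda-c$, contradicting $\Lambda=\inf_{\Sigma}\max_s E$. Therefore the required min-max sequence with vanishing differential exists, and Palais--Smale supplies the limit $\gamma_\infty\in\mathrm{Crit}(E)$.

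The one delicate point is this deformation: the flow time $\tau$ must be taken uniformly small so that the gradient lower bound known along the \emph{original} homotopy $H_n$ propagates, with a definite loss, to the flowed curves, and then $n$ must be taken large enough that the fixed energy decrease beats the vanishing excess $\max_s E(H_n(s))-\Lambda$; everything else is routine bookkeeping, the essential structural input being the mountain-pass gap, which makes the flow stationary at $\gamma_1$ and $\gamma_2$ and so keeps $\widetilde H_n$ inside $\Sigma$. The Palais--Smale verification, in turn, is immediate here because finite-energy curves issuing from $p$ are trapped in a fixed compact subset of $N$. An alternative avoiding the explicit deformation is to apply Ekeland's variational principle to $J(H):=\max_s E(H(s))$ on the complete metric space $(\Sigma,d)$ with $d(H,H'):=\sup_s\mathrm{dist}_X(H(s),H'(s))$: this yields $\widetilde H_n$ with $d(\widetilde H_n,H_n)\to 0$ carrying a near-maximal, near-critical curve $\widetilde H_n(s_n)$, and since $d(\widetilde H_n,H_n)\to 0$ the curves $H_n(s_n)$ converge to the same limit.
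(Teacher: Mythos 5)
Your proposal is correct and follows the same overall strategy as the paper: argue by contradiction, deform the minimizing sequence by the $H^1$-gradient flow of $E$ on the Hilbert manifold $X$, derive a definite energy decrease below $\Lambda$, and invoke the Palais--Smale condition to produce the critical limit. The implementations of the deformation step differ, though. You run the textbook quantitative deformation lemma: negate the conclusion to get a uniform gradient lower bound $\varepsilon_0$ on the slab $\{E\ge\Lambda-\varepsilon_0\}$, cut off the vector field, and propagate the lower bound for a short uniform flow time $\tau$; this last step needs not mere continuity of $\eta\mapsto\|dE(\eta)\|$ but its \emph{uniform} continuity (in practice, a uniform bound on $d^2E$) on the set of curves of bounded energy confined to the compact set $\mathcal{K}\subset N$ --- a true and standard fact, but one you should state as such, since bounded subsets of $X$ are not compact and pointwise continuity does not suffice. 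The paper sidesteps exactly this point: it flows for a fixed time $\tau^{\ast}$, looks at the first time $\tau_n$ at which the gradient drops below a threshold $\delta'$, and runs a dichotomy --- either $\tau_n$ is bounded below (giving the energy decrease) or $\tau_n\downarrow 0$, in which case a second application of Palais--Smale plus the estimate $d_X(\gamma_\tau,\gamma_0)\le\bigl(\int_0^\tau\|\nabla E\|_X^2\bigr)^{1/2}\tau^{1/2}$ shows the pre-flow and post-flow curves share a stationary limit, again a contradiction. One further small mismatch: you invoke the mountain-pass gap $\max\{E(\gamma_1),E(\gamma_2)\}<\Lambda$ to make the cutoff field vanish at the endpoints, but that inequality is a hypothesis of Proposition \ref{prop:finscale}, not of Lemma \ref{lem:critlev}; the paper only uses that $\gamma_1,\gamma_2$ are geodesics, hence fixed points of the unmodified flow, so that $\Sigma$ is preserved. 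This is harmless --- the cutoff is redundant since $\nabla E(\gamma_i)=0$, and if the gap fails then $\Lambda=E(\gamma_i)$ for some $i$ and the lemma holds trivially with the constant sequence --- but it is worth noting that your argument, as written, proves the lemma only in the regime where it is actually applied.
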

            			
            			Of course, we remark that the claim that \textsl{every} min-max sequence should converge to an element in $Crit(E)$ is patently false, as is discussed in \cite{Pit81} and \cite{CDL03}. To overcome such issue, one needs to perform a \textsl{pull-tight} procedure, which is then needed when discussing the regularity of min-max minimal surfaces. That step is not really necessary here.\\
            			
            			\indent	Roughly speaking, we can identify the tangent space of $X$ at $\gamma$ with the space of $W^{1,2}$ sections of the tangent bundle of $N$ restricted to (the support of) $\gamma$ and vanishing at the endpoints:
            			\begin{multline*}
            			T_{\gamma}X  \\
            			=\left\{V\! \in W^{1,2}([0,1], TN) :  \,
            			\pi(V(t))=\gamma(t) \ \forall  t\in [0,1], 
            			 V(0)=V(1)=0
            			  \right\}
            			\end{multline*}
            			where $\pi:TN\to N$ is the standard projection of the tangent bundle onto its base manifold. Notice also that we can then naturally endow $X$ with the structure of a Riemannian Hilbert manifold $(X,g^{X})$  by simply setting
            			\[
            			g^{X}: T_{\gamma}X\times T_{\gamma}X\to\mathbb{R}, \ \ g^{X}(V_1, V_2)=\int_0^{1}\left(g(V_1, V_2)+g(\dot{V}_1,\dot{V}_2)\right)\,dt.
            			\]
            			We refer the reader to Chapter 1 of \cite{Kli78} for an ample discussion and contextualization of these notions.\footnote{While \cite{Kli78} is mostly focused on the case of \textsl{closed} geodesics, modifying the basic definitions and constructions to deal with the case of curves with fixed endpoints only requires minimal effort.}We can now proceed with the proof of Lemma \ref{lem:critlev}.
            			
            			\begin{proof}
            				Let us start by describing the basic idea behind the argument we are about to present. Arguing by contradiction, we shall see that if a minimizing sequence did not have the property above (namely: if all converging min-max sequences clustered to non-stationary points) then one could indeed perform an unobstructed deformation of the minimizing sequence $\left\{H_n\right\}$ thereby obtaining a new sequence $\left\{\overline{H}_n\right\} \in \textrm{Seq}(\Sigma)$ for which
            				\[
            				\limsup_{n\to\infty} \sup_{s\in [0,1]}E(\overline{H}_n(s))<\Lambda
            				\]	
            				that is impossible, by the very definition of $\Lambda$ as min-max value. In the one-dimensional setting we are dealing with, the aforementioned deformation is performed by means of the so-called $H^1$-gradient flow (in fact: steepest descent flow) for the functional $E$ on $X$. 
            				
            				\
            				
            				Let us preliminarily observe that, under the contradiction assumption above, we can assume (without loss of generality) that for \textsl{any} min-max sequence $\left\{H_n(s_n)\right\} \in \textrm{Seq}(X)$ 
            				\[
            				\liminf_{n\to\infty} ||\nabla E(H_n(s_n))||_{X}=\delta>0
            				\] 
            				for, if not, the fact that the energy functional satisfies the Palais-Smale condition would imply sub-convergence of such sequence to a stationary critical point of $E$.
            				That being said, fix any $\tau^{\ast}>0$ and consider for each $\gamma\in X$ the $\tau^{\ast}$-image $\tilde{\gamma}$ of $\gamma$ under the gradient flow of $E$ (with respect to the Riemannian structure defined above), namely
            				we set $\tilde{\gamma}=\Phi_{\tau^{\ast}}(\gamma)$ where $\Phi_{\tau}$ is the flow associated to the ODE
            				\[
            				\begin{cases}
            				\frac{d}{d\tau}\gamma_{\tau}=-\nabla E(\gamma_{\tau}) \\
            				\gamma(0)=\gamma
            				\end{cases}
            				\]
            				and $\nabla E$ is defined by the equation $g^{X}(\nabla E(\gamma), V)=dE(\gamma)[V]$ to hold for all $V\in T_{\gamma}X$ (it is easily checked that $dE(\gamma)[V]=2\int_{0}^1 g(\dot{\gamma},\dot{V})\,dt$).
            				Recall that
            				\[
            				\frac{d}{d\tau}E(\gamma_{\tau})=-\|\nabla E(\gamma_{\tau})\|_X^2,
            				\]
            				so that for any $\tau'<\tau''$
            				\[
            				E(\gamma_{\tau''})-E(\gamma_{\tau'})=-\int_{\tau'}^{\tau''}\|\nabla E(\gamma_s)\|_X^2\,ds
            				\]
            				which we shall repeatedly use in the sequel of this proof.
            				In particular, it is convenient for any $n\geq 1$ to set  $\tilde{H}_n(s)=\Phi_{\tau^{\ast}}(H_n(s))$ and thus consider $\small\{\tilde{H}_n\small\}\in \textrm{Seq}(\Sigma)$ which is still (patently) a minimizing sequence\footnote{Let us remark that the fact that $\gamma_1, \gamma_2$ are geodesics is implicitly used in this step, as it implies that the class $\Sigma$ is indeed stable under the flow $\Phi$ and in particular $\small\{\tilde{H}_n \small\}\in Seq(\Sigma)$ since it was assumed that $\left\{H_n \right\}\in Seq(\Sigma)$.}due to the monotonicity of the flow in question.  Hence, for any $n\geq 1$ pick $\tilde{s}_n\in \arg \max E(\tilde{H}_n(s))$ and consider the associated sequence of (pre-flow) curves $\left\{H_n(\tilde{s}_n)\right\}\in \textrm{Seq}(X)$. It follows that it must be
            				\[
            				\liminf_{n\to\infty}E(H_n(\tilde{s}_n))=\Lambda
            				\]
            				for otherwise
            				\[
            				\Lambda=\liminf_{n\to\infty}E(\tilde{H}_n(\tilde{s}_n))\leq \liminf_{n\to\infty}E(H_n(\tilde{s}_n))<\Lambda
            				\]
            				which is impossible. This is to say that $\left\{H_n(\tilde{s}_n)\right\}\in \textrm{Seq}(X)$ is itself a min-max sequence. Also, the preliminary remark specifies to such sequence to ensure that indeed $\liminf_{n\to\infty} ||\nabla E(H_n(\tilde{s}_n))||_{X}=\delta$ for some number $\delta>0$. Pick then an intermediate threshold $\delta'\in (0,\delta)$ and set
            				\[
            				\tau_n:=\inf\left\{\tau\in (0,\tau^{\ast}] \, : \  ||\nabla E(\Phi_{\tau}(H_n(\tilde{s}_n)))||_{X}<\delta' \right\}
            				\]
            				where we agree to define $\tau_n=\tau^*$ if the set in question is empty.
            				
            				The following dichotomy holds: \ul{either} we can extract a subsequence of indices $\left\{n_{k}\right\}$ such that $\left\{\tau_{n_k}\right\}$ has a positive bound $\tau_{\ast}\in (0,\tau^{\ast})$ (in which case it is immediately checked that  $\liminf_{k\to\infty}E(\tilde{H}_{n_k}(\tilde{s}_{n_k}))\leq \Lambda-\delta'\tau_{\ast}$, which is impossible, as we have already observed) \ul{or} instead such condition is violated for all $\delta'>0$ and thus we can find
            				\begin{enumerate}
            					\item{a sequence $\left\{\delta_k\right\}$ with $\delta_k \searrow 0$;}
            					\item{a sequence $\left\{n_k\right\}$ with $n_k\nearrow \infty$;}
            					\item{a sequence $\left\{\tau_{n_k}\right\}$ with $\tau_{n_k}\searrow 0$;}
            				\end{enumerate}
            				such that
            				\[
            				\|\nabla E(\Phi_{\tau_{n_k}}(H_{n_k}(\tilde{s}_{n_k})))||_{X}<\delta_k.
            				\]
            				But, if this were the case, again by Palais-Smale the min-max sequence $\left\{\Phi_{\tau_{n_k}}(H_{n_k}(\tilde{s}_{n_k}))\right\}$ would subconverge to a stationary point of $E$ on $X$, say $\tilde{\gamma}_{\infty}$. Recall next the following elementary estimate for 
            				solutions $\gamma_s$ of the gradient flow of $E$:
            				\begin{multline*}
            				d_X(\gamma_{\tau},\gamma_{\tau_0})\leq \int_{\tau_0}^\tau \|{\textstyle{\frac{d}{ds}}} \gamma_s\|_X\, ds = \int_{\tau_0}^{\tau}\|\nabla E(\gamma_s)\|_X\,ds \\
            				\leq \left(\int_{\tau_0}^{\tau}\|\nabla E(\gamma_s)\|_X^2\,ds\right)^{1/2}|\tau-\tau_0|^{1/2}\, 
            				\end{multline*}
            				where $d_X(\cdot,\cdot):X\times X\to\mathbb{R}$ denotes the (Riemannian) path-distance in the Hilbert manifold $X$.
            				We apply it with $\tau_0=0$ and $\tau= \tau_{n_k}$ to bound the distance between $H_{n_k} (\tilde{s}_{n_k})$ and $\Phi_{\tau_{n_k}}(H_{n_k}(\tilde{s}_{n_k}))$. Since 
            				\begin{align*}
            				\int_{0}^{\tau_{n_k}}\|\nabla E\left(\Phi_{\tau}(H_{n_k}(\tilde{s}_{n_k}))\right) \|_X^2\,d\tau &= E(H_{n_k}(\tilde{s}_{n_k}))-E(
            				\Phi_{\tau_{n_k}} (H_{n_k} (\tilde{s}_{n_k}))) \\
            				&\leq E (H_{n_k}(\tilde{s}_{n_k}))
            				\end{align*}
            				is uniformly bounded in $k$ and $\tau_{n_k}\downarrow 0$, we conclude that $\left\{H_{n_k}(\tilde{s}_{n_k})\right\}$ and $\left\{\Phi_{\tau_{n_k}}(H_{n_k}(\tilde{s}_{n_k}))\right\}$ have the same limit, namely the stationary point $\tilde{\gamma}_{\infty}$. Once again, this contradicts our initial assumption, namely that all converging min-max sequences cluster to non-stationary points, and thereby our proof is complete.
            			\end{proof}	 
            			
            			Now, before describing the resolution of singularities procedure we need to reduce to \textsl{generic} homotopies, in the sense made precise by this statement.
            			
            			\begin{lem}\label{lem:generic}
            			(Setting as above).	Given $\varepsilon>0$ the following holds: for every $H\in \Sigma$ there exists $\tilde{H}\in \Sigma$, in fact $\tilde{H}\in C^{\infty}([0,1]\times [0,1]; N)$, such that all of these assertions are true:
            				\begin{enumerate}
            					\item{$\forall \ s\in [0,1]$ one has $\|H(s)-\tilde{H}(s)\|_{X}<\varepsilon$;}	
            					\item{there are finitely many singular times $s_1<s_2<\ldots<s_{k-1}<s_{k}$ and if $s\in [0,1]\setminus \cup_{i=1}^{k}\left\{s_i\right\}$ the map $\tilde{H}(s)$ is an immersion with only transverse self-intersections and no triple points;}
            					\item{for $s=s_i$ the singular events\footnote{The reader is referred to Section 2 of \cite{CL14} for relevant definitions, see in particular Figure 1 therein for a clear illustration of the three possible Reidemeister moves.}correspond to one of the standard three Reidemeister moves and, furthermore, singular events do not happen concurrently;}
            					\item{there exist $\delta>0$ such that $\forall s\in [0,1]$ the curve $\tilde{H}(s)$ has no self-intersections in $B_{\delta}(p)\sqcup B_{\delta}(q)$, and is an immersion when restricted to these balls.}	
            				\end{enumerate}		
            			\end{lem}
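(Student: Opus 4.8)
The plan is to produce $\tilde H$ from $H$ in three stages --- a localization near the endpoints $p,q$, a smoothing, and a parametric general-position perturbation --- splitting the error bound $\varepsilon$ of (1) into three equal parts. \emph{Localization.} Since $\gamma_1,\gamma_2$ are embedded geodesics meeting only at $p$ and $q$, and transversally there (as $\Phi$ is a $C^1$-diffeomorphism, $d\Phi$ carries the two corner directions of $\mathbb{D}_{+}$ to distinct directions), we fix $\delta>0$ so small that $B_{2\delta}(p),B_{2\delta}(q)$ are disjoint geodesically convex balls inside which each of $\gamma_1,\gamma_2$ is a single radial geodesic segment issuing from $p$, respectively $q$. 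For intermediate $s$ the curve $H(s)$ may re-enter $B_{\delta}(p)$; each such excursion is an arc with both endpoints on $\partial B_{\delta}(p)$ lying in $\overline{B_{\delta}(p)}$, and can be pushed into a thin outer collar of $\partial B_{\delta}(p)$ by a homotopy displacing points by $O(\delta)$, hence, for $\delta$ small, by less than $\varepsilon/3$ in $W^{1,2}$ (the replacement arcs have length $\le C\delta$). Carrying this out for the whole family (which is trivial at $s=0,1$, since $\gamma_1,\gamma_2$ have no such excursions), and then sliding the one remaining initial sub-arc of $H(s)$, rel endpoints, to the radial geodesic from $p$ to its exit point on $\partial B_{\delta}(p)$ --- in a direction interpolating between those of $\gamma_1$ and $\gamma_2$ --- and symmetrically near $q$, we obtain $H'\in\Sigma$ with $\|H(s)-H'(s)\|_X<\varepsilon/3$ for all $s$, such that each $H'(s)$ meets $B_{\delta}(p)\sqcup B_{\delta}(q)$ in exactly two embedded radial geodesic arcs depending smoothly on $s$; in particular (4) already holds for $H'$ with this $\delta$. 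We then freeze the family inside $B_{\delta/2}(p)\cup B_{\delta/2}(q)$.

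\emph{Smoothing.} By density of smooth maps with prescribed boundary values in $W^{1,2}$, applied to $H'$ and tapered so as to leave $H'(0)=\gamma_1$, $H'(1)=\gamma_2$ and the frozen arcs near $p,q$ unchanged, we obtain $H''\in\Sigma\cap C^{\infty}([0,1]^2;N)$ with $\|H'(s)-H''(s)\|_X<\varepsilon/3$. Joint smoothness makes $s\mapsto H''(s)$ continuous into $C^1$, so $H''(s)$ is $C^1$-close to the embedded curves $\gamma_1,\gamma_2$, hence itself embedded, for $s$ near $0$ and near $1$; consequently no transversality adjustment will be needed there.

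\emph{General position.} It remains to perturb $H''$, by less than $\varepsilon/3$ in $X$, keeping it fixed for $s\in\{0,1\}$ and inside $B_{\delta/2}(p)\cup B_{\delta/2}(q)$, so as to secure (2) and (3). This is the classical general-position theorem for one-parameter families of immersed curves on a surface, which we derive from the Thom--Mather multijet transversality theorem applied to the evaluation maps $(s,t)\mapsto H''(s,t)$, $(s,t_1,t_2)\mapsto(H''(s,t_1),H''(s,t_2))$ and $(s,t_1,t_2,t_3)\mapsto(H''(s,t_1),H''(s,t_2),H''(s,t_3))$: one demands transversality to the non-immersion stratum $\{\partial_t H''=0\}$, to the diagonal, to the self-tangency stratum, to the triple diagonal, and to all the relevant higher-order strata. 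Comparing the domain dimensions $2,3,4$ with the respective numbers of defining equations, a generic such perturbation makes the non-immersion locus a finite set of pairs $(s,t)$ with $t\in(0,1)$ (each a nondegenerate cusp, i.e.\ a Reidemeister~I event), the tangential double-point locus finite (each a nondegenerate self-tangency, Reidemeister~II), the triple-point locus finite (each a transverse triple point, Reidemeister~III), and leaves empty all worse configurations --- quadruple points, degenerate or tangential triple points, two simultaneous tangencies, a crossing at a cusp, and so on. A last perturbation, small enough that the whole of this step stays within $\varepsilon/3$ in $X$ and acting only through the $s$-coordinate, separates the finitely many critical parameters of the Reidemeister~I, II, III events, which is (3); since no forbidden configuration occurs between consecutive critical parameters, the resulting map $\tilde H$ is, at every non-singular $s$, an immersion with transverse double points and no triple points, which is (2), the smoothness there being inherited from the second step. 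As nothing near $p,q$ was moved, (4) persists, and summing the three error budgets gives (1).

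\emph{Principal difficulty.} The substantive work lies in the third step: one must organize the multijet stratification carefully enough to enumerate and exclude \emph{every} degeneracy that would obstruct (2)--(3), and --- because the perturbation is constrained to vanish at $s=0,1$ and near $p,q$ --- invoke a \emph{relative} form of parametric transversality. The latter is legitimate here precisely because $\gamma_1,\gamma_2$ are already embedded (so the frozen boundary data is automatically in general position) and because all the configurations one excludes are confined to the region where the perturbation is unconstrained. For the finer combinatorial bookkeeping and for the explicit local normal forms of the three Reidemeister moves we follow \cite{CL14}.
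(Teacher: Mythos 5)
Your second and third steps (smoothing followed by a relative multijet transversality argument, with the dimension counts for cusps, self-tangencies and triple points, and with the observation that embeddedness of $\gamma_1,\gamma_2$ makes the constraint at $s=0,1$ harmless) are essentially the paper's own route to (2)--(3), which is an appeal to the transversality arguments of \cite{CL14}, pp.~1083--1084. Where you genuinely diverge is in how you obtain (4). The paper does \emph{no} surgery near $p$ and $q$: it considers the evaluation map $(s,t_1,t_2)\mapsto\bigl(t_1,t_2,H(s)(t_1),H(s)(t_2)\bigr)$, whose domain is $3$-dimensional, and notes that a double point \emph{at} the specific point $p$ is a codimension-$4$ condition, so that generically no self-intersection occurs at $p$ at all (for interior pairs of times); compactness then produces the ball $B_{3\delta}(p)$ free of self-intersections.

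Your ``localization'' step, by contrast, contains a genuine gap: it is in general impossible. You claim to modify $H$ by less than $\varepsilon/3$ in $X$ (hence by a $C^0$-small amount, via $W^{1,2}\hookrightarrow C^0$) so that every $H'(s)$ meets $B_\delta(p)$ in a single embedded radial arc; in particular no $H'(s)$ would return to $p$ at a positive time. But take $H\in\Sigma$ for which $G(s,t)=H(s)(t)$ restricts to a homeomorphism from a small closed disk $D\subset(0,1)\times(0,1)$ onto a neighbourhood of $p$, and set $\eta=d\bigl(p,G(\partial D)\bigr)>0$. Any $H'$ with $\sup_{s,t}d\bigl(H(s)(t),H'(s)(t)\bigr)<\eta$ has $\deg(G',D,p)=\deg(G,D,p)=\pm1$, so some $H'(s_1)(t_1)=p$ with $(s_1,t_1)\in D$ and $t_1>0$ --- contradicting your normalization as soon as $\varepsilon$ is small. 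Such returns to $p$ are a codimension-$2$ phenomenon met by a $2$-parameter family and simply cannot be perturbed away; any correct argument has to tolerate double points one of whose branches is the germ of the curve at $t=0$ (resolving them later by moving only the other branch), rather than excise them. Two further, more repairable, defects of the same step: the excursion-by-excursion replacement is not obviously continuous in $s$ (excursions are born, die and merge as $s$ varies, and the choice of replacement boundary arc jumps when an excursion passes near $p$); and ``displacing points by $O(\delta)$'' bounds only the $L^2$ part of $\|H(s)-H'(s)\|_X$, while the derivative part requires the uniform smallness of $\int_{H(s)^{-1}(B_\delta(p))}|\dot H(s)|^2\,dt$, which follows from equi-absolute-continuity over the compact family but is not what you invoked. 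I would recommend replacing the whole first step by the paper's transversality-at-$p$ argument.
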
		
            			
            			\begin{proof}
            				First of all, let us see why (4) is indeed a generic condition: in other words, given $\varepsilon>0$ and $H\in\Sigma$ as per the statement above, let us show that we can find $\tilde{H}\in \Sigma$ so that both (1) and (4) are satisfied. This is intuitively clear, but let us discuss it for the sake of completeness. We shall refer to the first claim (no self-intersections) and leave the second one to the reader.
            				For our fixed $p\in N$ (the argument will then be applied to the point $q$ as well) consider
            				\[
            				Q^{(2)}=\left\{ (t_1, t_2)\in [0,1]\times [0,1] \, : \ t_1\neq t_2 \right\}
            				\]
            				as well as the twofold 0-jet bundle
            				\[
            				J_2^0([0,1], N)= Q^{(2)}\times N^2
            				\]
            				and its subset $N_d=\left\{(t_1, t_2, p, p) \right\}\subset J_2^0([0,1], N)$. Let us observe that $N_d$ is a 2-dimensional closed submanifold in $J_2^0$ which, in turn, is a smooth (open) manifold of dimension 6.
            				Given $s\in [0,1]$ one has the induced map $j_2^0 H(s):Q^{(2)}\to J_2^0([0,1], N)$ defined by the equation
            				\[
            				j_2^0 H(s)(t_1, t_2)=(t_1, t_2, H(s)(t_1), H(s)(t_2))
            				\]
            				so that clearly 
            				\[
            				H(s)(t_1)=H(s)(t_2)=p \ \iff \ j_2^0 H(s)(t_1, t_2)\in N_d.
            				\]
            				Hence, one can observe that for a generic homotopy $H$ the map 
            				\[f:[0,1]\times Q^{(2)}\to J_2^0([0,1], N), \ f(s,t_1, t_2)=j_2^0 H(s)(t_1, t_2)
            				\] will intersect the submanifold $N_d$ transversely which implies (by dimensional counting) that the intersection $f\pitchfork N_d$ in question will in fact be empty. Thus, we can find $\tilde{H}\in \Sigma$ which is $\varepsilon$-close to $H$ and in a way that self-intersections do not happen at $p$ or $q$ so that, by compactness we can indeed find $\delta>0$ (depending on $H$ and $\varepsilon$) so that all self-intersections of $\tilde{H}(s)$ as $s$ varies in $[0,1]$ happen outside of the balls of center $p$ (resp. $q$) and radius $3\delta$. Similarly, one proves that the map $\tilde{H}$ can be chosen, generically, so that $\tilde{H}(s)$ is  an immersion near $p$ and $q$ for all $s\in [0,1]$. At that stage, one can follow verbatim the (analogous) transversality arguments presented in \cite{CL14}, pg. 1083-1084 to ensure that by further perturbing $\tilde{H}$  the other conditions (2) and (3) can be accomodated as well.
            			\end{proof}
            			
            			At this stage, we are ready to use the machinery of \cite{CL14} to our scopes.

            			\begin{lem}\label{lem:surg}
            				(Setting as above).
            				Given $\varepsilon>0$ the following holds: for every $\tilde{H}\in\Sigma$ \textsl{generic} homotopy (in the sense specified by Lemma \ref{lem:generic}) there exists $\overline{H}\in\Sigma$, in fact $\overline{H}\in C^{\infty}([0,1]\times [0,1], N)$, such that these assertions are true:
            				\begin{enumerate}
            					\item{$\forall \ s \in [0,1]$ one has $E(\overline{H}(s))\leq E(\tilde{H}(s))+\varepsilon$;}
            					\item{the map $\overline{H}$ is an isotopy, namely $\forall \  s\in [0,1]$ the map $\overline{H}(s)$ is an embedding.}	
            				\end{enumerate}		
            			\end{lem}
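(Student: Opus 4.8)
The plan is to run the Chambers--Liokumovich resolution-of-singularities scheme on the one-parameter family $\tilde{H}$, applying their surgery procedure at each singular time and interpolating continuously in between. Fix a generic homotopy $\tilde{H}$ as in Lemma~\ref{lem:generic}, with singular times $s_1<\cdots<s_k$ at which the only singular events are Reidemeister moves occurring one at a time, and with all self-intersections uniformly bounded away from $p$ and $q$ (condition (4)). For $s$ in the complement of the singular times, $\tilde{H}(s)$ is an immersion with finitely many transverse double points; the idea is to remove these double points by the length- (here energy-) almost-nonincreasing surgery of \cite{CL14}, which replaces a small bigon or crossing neighbourhood by a competing arc costing at most $\varepsilon$ more energy. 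The surgeries must be performed in a way that varies continuously with $s$ on each component of $[0,1]\setminus\{s_1,\dots,s_k\}$ and is compatible across the Reidemeister moves at the $s_i$, where the combinatorial type of the intersection pattern changes by a single elementary modification.

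First I would recall from \cite{CL14} the two basic building blocks: (a) for a fixed immersed curve with transverse double points, there is a canonical way to resolve all crossings (e.g.\ by repeatedly removing innermost bigons and then innermost ``monogon-free'' disks, replacing short arcs by shorter chords along the ambient geometry) producing an embedded curve whose energy exceeds the original by at most a prescribed small amount; and (b) this resolution can be chosen to depend continuously on the input immersion within a neighbourhood in which the combinatorial intersection data is constant, and to change in a controlled (``compatible'') manner when the input crosses a Reidemeister wall. Condition (4) of Lemma~\ref{lem:generic} is what guarantees that every such surgery takes place in the interior of the curve, away from the fixed endpoints, so that the resulting curves still lie in $X$ with the correct boundary values $p,q$; and at the two distinguished endpoints $\gamma_1,\gamma_2$ of the family, which are \emph{embedded} geodesics by hypothesis, no surgery is needed at all, so $\overline{H}(0)=\gamma_1$ and $\overline{H}(1)=\gamma_2$, keeping $\overline{H}\in\Sigma$.

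Concretely I would proceed time-slice by time-slice. On $[0,s_1)$ the intersection combinatorics of $\tilde{H}(s)$ is locally constant except possibly at finitely many further ``non-singular'' parameter values where double points appear or disappear in cancelling pairs via a Reidemeister-I or II move; between these, apply the canonical resolution (a) continuously in $s$. At a singular time $s_i$ the resolution on the two sides must be patched: because the move is one of the three standard Reidemeister moves, the pre- and post-surgery curves differ only in a small ball, and one interpolates through that ball by a short homotopy, again costing at most an additional $\varepsilon$ in energy if the ball is chosen small enough. Summing the errors over the finitely many slices and rescaling $\varepsilon$ at the outset gives assertion (1), $E(\overline{H}(s))\le E(\tilde{H}(s))+\varepsilon$ for all $s$; assertion (2), that every $\overline{H}(s)$ is an embedding, is immediate from the fact that the canonical resolution produces curves with no self-intersections. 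Finally a standard mollification makes $\overline{H}$ smooth as a map on $[0,1]\times[0,1]$ while preserving embeddedness and the energy bound.

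The main obstacle is the \textbf{continuity of the surgery in the parameter $s$ and its compatibility across the Reidemeister walls}: a crossing-removal procedure that is defined slice-by-slice can easily jump discontinuously when the combinatorial type of the self-intersection diagram changes, or when the ``innermost bigon'' one is about to cut degenerates. This is precisely the delicate point handled in \cite{CL14}, and the bulk of the work here is to verify that their construction applies verbatim in our setting: the only structural differences are that our curves have fixed endpoints $p,q$ rather than being closed, and that we track the energy functional $E$ rather than length. The first difference is neutralized by condition (4) of Lemma~\ref{lem:generic}, which confines all surgeries to a region disjoint from small balls around $p$ and $q$, so the arguments of \cite{CL14}, pp.~1083--1090, go through unchanged on the relevant portions of the curves. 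For the second, I would note that a curve parametrized proportionally to arc length has $E=L^2$, and the surgeries of \cite{CL14}, being local length-reducing (up to $\varepsilon$) modifications followed by reparametrization to constant speed, only decrease (up to a controllable error) the energy as well; so the length estimates transfer directly to energy estimates. With these two observations in place, the proposition follows by citing the machinery of \cite{CL14} and performing the finite patching across $s_1,\dots,s_k$ described above.
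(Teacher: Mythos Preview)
Your proposal is correct and follows essentially the same approach as the paper: both reduce directly to the Chambers--Liokumovich algorithm, using condition (4) of Lemma~\ref{lem:generic} to confine all surgeries away from the endpoints so that the fixed-boundary constraint is preserved. Your write-up is more detailed than the paper's (which is a two-sentence citation of \cite{CL14}, pp.~1088--89), and in particular you make explicit the length-to-energy transfer via constant-speed reparametrization, a point the paper leaves implicit.
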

            			
            			\begin{proof} Given condition (4) of Lemma \ref{lem:generic}, we know that for a generic homotopy all self-intersections (and singularities) happen away from $B_{\delta}(p)\sqcup B_{\delta}(q)$. Hence, one can perform a finite chain of Reidemeister moves according to the general algorithm described in \cite{CL14}, pg. 1088-89, with the only constraint that (while resolving the singularities) no modifications should be made to the support of the homotopy inside  $B_{\delta/2}(p)\sqcup B_{\delta/2}(q)$.
            			\end{proof}	
            			
            			We shall then proceed with the proof of Proposition \ref{prop:finscale}.
            			
            			\begin{proof}
            				Let $\left\{H_n\right\}\in \textrm{Seq}(\Sigma)$ be a minimizing sequence for the min-max problem defined above. By applying, one after the other, Lemma \ref{lem:generic} and Lemma \ref{lem:surg} (taking in both cases $\varepsilon=1/2n$ when dealing with $H_n$) we can produce a new sequence $\left\{\overline{H}_n\right\}\in \textrm{Seq}(\Sigma)$ which is also minimizing (for indeed $E(\overline{H}_n(s))\leq E(H_n(s))+1/n$) and consists of isotopies. Now, Lemma \ref{lem:critlev} ensures the existence of an associated min-max sequence $\left\{\overline{H}_n(s_n)\right\}\in \textrm{Seq}(X)$ converging in $X$ to a geodesic $\gamma_{\infty}:I\to N$ attaining energy $\Lambda$.
            				Of course, $\gamma_{\infty}$ may a priori not be an embedding, but if it had self-intersections those would be transverse (by virtue of a standard ODE uniqueness argument). However, since in particular the sequence $H_{n}(s_n)$ does converge to $\gamma_{\infty}$ uniformly (namely: in $C^0$) if $\gamma_{\infty}$ had a transverse self-intersection, then $H_n(s_n)$ would not be an embedding for $n$ large enough, and this contradiction completes the proof for what concerns the existence statement. 
            				Lastly, the fact that the geodesic $\gamma_3\in X$, regarded as a critical point of the energy functional $E(\cdot)$, has Morse index less or equal than one is a general fact about one-dimensional mountain-pass schemes. In particular, denote by $A_\infty$ the subset of $X$ consisting of the cluster points of any min-max sequence $\{\overline{H}_n (s_n)\}$ and let $K_{\Lambda}$ be the set of critical points $\gamma$ of $E$ with $E (\gamma) = \Lambda$. Observe that the argument above implies that any element of $A_\infty \cap K_\Lambda$ is embedded. We can then apply Theorem 4 of page 53 in \cite{Gho91} to conclude the existence of at least one element in $A_\infty \cap K_\Lambda$ whose Morse index is at most $1$ (with reference to the notation of \cite{Gho91}, note that such theorem can be applied because the group $G$ is in our case the trivial group and the $G$-invariant set $F$ is the whole space $X$; in particular $K_\Lambda \cap F = K_\Lambda$ is trivially an ``isolated critical set in itself'' in the sense of \cite{Gho91}). 
            			\end{proof}

            			\section{The construction of embedded geodesic lines}\label{sec:proof}

            			Given the above preliminaries, we shall prove here a multiplicity theorem for geodesics with fixed endpoints: given two antipodal points $p, q$ on an asymptotically conical surface we want to show the existence of (at least) three geometrically distinct embedded geodesics connecting them. Let us start by reminding the reader that, by virtue of Lemma \ref{lem:limgeod} we already know the existence of one such geodesic (namely: the absolute length-minimizer) and the next step we are about to present is the construction of a second one by means of a localized minimization argument.
            			
            			\begin{lem}\label{lem:second}
            				Let $(S,g)$ be an asymptotically conical surface of non-negative scalar curvature and (in asymptotic coordinates $(r,\varphi)$) let $p, q$ be a couple of antipodal points. Let us denote the value of their first coordinate by $r_0$. Then there exists a constant $\overline{r}_0$ (only depending on $(S,g)$) such that for every $r_0\geq \overline{r}_0$ there are two distinct, simple geodesics $\Gamma_1, \Gamma_2$ connecting $p, q$, they are disjoint and geometrically converge, under rescaling by a factor $r^{-1}_0$ respectively to $\Gamma', \Gamma''$ (modulo renaming of the latter ones). 
            			\end{lem}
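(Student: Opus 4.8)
\emph{Overall strategy, and the first geodesic.} The plan is to let $\Gamma_1$ be the absolute length minimizer furnished by Lemma~\ref{lem:limgeod}(1) and to produce $\Gamma_2$ by a \emph{homotopically constrained} minimization, exploiting the near-conical geometry both to confine the new minimizer to a controlled annulus (hence to make it a genuine geodesic missing $Z$) and to pin down its blow-down limit. We may assume $\alpha\in(0,\pi/2)$, since for $\alpha=\pi/2$ Theorem~\ref{lem:pmt2} gives Euclidean $\R^2$ and the statement is empty; then, again by Theorem~\ref{lem:pmt2}, $S$ is a plane, so that $S$ minus an interior point of $Z$ is an annulus. The geodesic $\Gamma_1$ is simple (otherwise one could shortcut at a self-intersection), and by Lemma~\ref{lem:limgeod}(2), after relabelling $\Gamma',\Gamma''$ if needed, the rescaled $\Gamma_1$ converges — by Arzel\`a--Ascoli on the unit-speed parametrisations, so in $C^0$ and hence in $C^1$ — to $\Gamma'$. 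The length comparison in the proof of Lemma~\ref{lem:limgeod} shows, uniformly in the angular position of $p,q$, that $\Gamma_1$ lies in the open annulus $\{r_{in}<r<2C\pi r_0\}$, with $r_{in}=(1-\delta\sin(\tfrac{\pi}{2}\sin\alpha))r_0$ and $\delta\in(1,1/\sin(\tfrac{\pi}{2}\sin\alpha))$ fixed once and for all; in particular $\Gamma_1$ misses $Z$ and a neighbourhood of the vertex once $r_0$ is large.

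\emph{Constructing the second geodesic.} Set $A=\{r_{in}\le r\le r_{out}\}$ with $r_{out}=\max\{2C\pi,3\}\,r_0$, so that $A\supset\Gamma_1$ and, for $r_0$ large, $A\cap Z=\emptyset$. Identify $S\setminus Z$ with $\R^2\setminus\{0\}$ via $\Phi$ and let $\sigma^-$ be the other of the two circular arcs of $\{r=r_0\}$ joining $p$ to $q$, i.e.\ the one not in the homotopy class of $\Gamma_1$; its class $[\sigma^-]\in\pi_1(A)\cong\mathbb Z$ is precisely the class whose length minimizer on the cone $C_\alpha$ is $\Gamma''$ (Lemma~\ref{lem:geodcon}). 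Define $\Gamma_2$ to be a length minimizer among rectifiable curves in $A$ from $p$ to $q$ lying in the class $[\sigma^-]$: it exists by the direct method, competitors of uniformly bounded length being available — transplant, via $\Phi^{-1}\circ\mathbf{Dil}_{r_0}$, a curve on $C_\alpha$ close to $\Gamma''$ — and the homotopy class being a closed condition under uniform convergence inside the compact set $A$. The crucial observation is that the resulting bound $\mathrm{len}(\Gamma_2)\le\bigl(2\sin(\tfrac{\pi}{2}\sin\alpha)+o(1)\bigr)r_0<2r_0$ confines $\Gamma_2$ to the \emph{open} annulus: touching $\{r=r_{out}\}$ would force $\mathrm{len}(\Gamma_2)\ge 2(r_{out}-r_0)(1-o(1))\ge 4r_0(1-o(1))$, while touching $\{r=r_{in}\}$ would force $\mathrm{len}(\Gamma_2)\ge 2(r_0-r_{in})(1-o(1))=2\delta\sin(\tfrac{\pi}{2}\sin\alpha)\,r_0\,(1-o(1))>\bigl(2\sin(\tfrac{\pi}{2}\sin\alpha)+o(1)\bigr)r_0$ since $\delta>1$ — both impossible. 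Hence $\Gamma_2$ is a local length minimizer in the open annulus, so a bona fide geodesic of $(S,g)$, and $\Gamma_2\subset\{r_{in}<r<r_{out}\}\subset S\setminus Z$.

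\emph{Blow-down, identification of the limits, simplicity and disjointness.} The rescaled $\Gamma_2$ is a geodesic of $g^{(r_0)}$ whose rescaled length tends to $2\sin(\tfrac{\pi}{2}\sin\alpha)$ (upper bound as above; lower bound because $g^{(r_0)}\ge(1-o(1))g_\alpha$ on the relevant fixed annulus and $\Gamma''$ minimizes length in the class $[\sigma^-]$ on the cone); so, exactly as in the proof of Lemma~\ref{lem:limgeod}(2) — Arzel\`a--Ascoli together with $g^{(r_0)}\to g_\alpha$, using Lemma~\ref{lem:clairaut} to control a possible near-vertex excursion — every subsequence of the rescaled $\Gamma_2$ has a further subsequence converging in $C^0$, hence $C^1$, to a smooth cone geodesic between antipodal points at unit distance of length $\le 2\sin(\tfrac{\pi}{2}\sin\alpha)<2$, i.e.\ (Lemma~\ref{lem:geodcon}) to $\Gamma'$ or $\Gamma''$. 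The limit cannot be $\Gamma'$: the inclusion $S\setminus Z\hookrightarrow$ ($S$ minus an interior point of $Z$) is a homotopy equivalence, so $[\Gamma_2]$ is identified with the class of $\Phi(\Gamma_2)$ in $\R^2\setminus\{0\}$; if the rescaled $\Gamma_2$ were $C^0$-close to the vertex-avoiding $\Gamma'$ then $\Phi(\Gamma_2)$ would stay uniformly far from the origin and be homotopic rel endpoints to the rescaled $\Phi(\Gamma_1)$, giving $[\Gamma_2]=[\Gamma_1]$, against $[\Gamma_2]=[\sigma^-]$. Therefore the whole rescaled sequence converges to $\Gamma''$; in particular, for $r_0$ large $\Gamma_2$ is $C^1$-close to the embedded arc $r_0\Gamma''$ with the same endpoints, hence $\Gamma_2$ is simple. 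Intersecting the finitely many (uniform in the angular position of $p,q$) smallness thresholds above produces the required $\overline r_0$. Finally, for disjointness: $\Gamma_1\ne\Gamma_2$ (distinct classes) and distinct geodesics cannot be tangent (ODE uniqueness), so a hypothetical intersection off $\{p,q\}$ would be transverse; a bigon-removal surgery at an innermost such crossing, followed by smoothing the corners, yields curves $\tilde\eta_1,\tilde\eta_2\subset A$ from $p$ to $q$ with $\{[\tilde\eta_1],[\tilde\eta_2]\}=\{[\Gamma_1],[\Gamma_2]\}$ and $\mathrm{len}(\tilde\eta_1)+\mathrm{len}(\tilde\eta_2)<\mathrm{len}(\Gamma_1)+\mathrm{len}(\Gamma_2)$, contradicting $\mathrm{len}(\tilde\eta_1)\ge\mathrm{len}(\Gamma_1)$ (absolute minimality) and $\mathrm{len}(\tilde\eta_2)\ge\mathrm{len}(\Gamma_2)$ (minimality of $\Gamma_2$ in its class inside $A$). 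Thus $\Gamma_1\cap\Gamma_2=\{p,q\}$.

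\emph{The main obstacle.} The genuinely delicate step is the one ruling out the ``twisting'', whereby the constrained minimizer could a priori blow down to $\Gamma'$ rather than to $\Gamma''$: this is exactly where the topological (winding) constraint built into the definition of $\Gamma_2$, combined with the sharp length bound $\mathrm{len}(\Gamma_2)<2r_0$ of the previous step and with Lemma~\ref{lem:clairaut} — which leaves no length budget for a winding excursion near the vertex — becomes indispensable. A secondary point requiring care is that the constrained minimizer must not run into the compact core $Z$: here the length estimate settles it directly, but one may alternatively invoke the Gauss--Bonnet identity $\int_S K_g=2\pi(1-\sin\alpha)<2\pi$ to exclude the pinching off of geodesic loops during the minimization.
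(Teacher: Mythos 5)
Your proof is correct in substance but follows a genuinely different route from the paper's. The paper obtains $\Gamma_2$ by minimizing length among curves confined to a thin, mean-convex elliptical domain $D$ built around the ``straight'' segment joining $p$ to $q$ in the wedge coordinates of Remark \ref{ref:wedgecoor2}: convexity of $D$ (checked for the flat model and then propagated to $(S,g)$ via the decay of $e$) keeps the minimizer away from $\partial D$, simplicity is immediate because a local minimizer with no homotopy constraint cannot have self-intersection loops, and the blow-down limit is pinned down at once because $D$ contains neither the vertex nor the bulk of $\Gamma'$. You instead minimize in a fixed relative homotopy class inside a large coordinate annulus $A$, use sharp length comparisons to show the constrained minimizer never touches $\partial A$ (so it is a genuine geodesic), use the winding constraint to exclude $\Gamma'$ as a blow-down limit, and then recover simplicity from $C^1$-closeness to the embedded limit arc. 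Your approach trades the verification of mean-convexity of the ellipse for some topological bookkeeping; both are legitimate, and your no-drift length estimates at $r_{in}$ and $r_{out}$ are correct.

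One step deserves more care: in the disjointness argument you assert that the bigon exchange produces curves with $\{[\tilde\eta_1],[\tilde\eta_2]\}=\{[\Gamma_1],[\Gamma_2]\}$, but an innermost bigon bounded by arcs of $\Gamma_1$ and $\Gamma_2$ could a priori be \emph{essential} in the annulus (i.e.\ contain the inner disk, hence the puncture), in which case swapping its sides shifts both homotopy classes and the contradiction with the constrained minimality of $\Gamma_2$ evaporates. The fix is available from what you have already proved: since the rescaled $\Gamma_1$ and $\Gamma_2$ converge in $C^1$ to $\Gamma'$ and $\Gamma''$, which meet only at their endpoints, every crossing of $\Gamma_1$ with $\Gamma_2$ lies in a ball of radius $o(1)\cdot r_0$ about $p$ or $q$; taking the first crossing along $\Gamma_2$ from $p$, both connecting arcs are short and contained in a small simply connected disk inside $A$, so the exchange there preserves the classes and smoothing the transverse corner gives the strict length drop. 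With that localization made explicit the argument closes; as written, the class-preservation claim is the one genuinely unjustified assertion.
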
	
            			
            			\begin{proof}
            				Let us denote by $\Gamma_1$ the length-minimizing (hence necessarily simple) geodesic between $p$ and $q$, whose existence is guaranteed by Lemma \ref{lem:limgeod}. Possibly by renaming, the same result implies that $\Gamma_1$ will converge, upon rescaling, to $\Gamma'$.
            				In order to construct the second geodesic, it is convenient to identify the domain covered by our $(r,\varphi)$ charts with (a suitable subset of) a planar wedge of angle $2\pi\sin\alpha$, namely with the wedge described in polar coordinates $(\rho,\vartheta)$ by the equations $0<\rho, 0\leq\vartheta\leq 2\pi\sin\alpha$, with pointwise identification of the two edges at $\vartheta=0, \vartheta=2\pi\sin\alpha$ (see also Remark \ref{ref:wedgecoor2}). In this model, we can then assume that the points $p, q$ have coordinates given by
            				\[
            				\rho(p)=\rho(q)=r_0, \ \ \vartheta(p)=\frac{\pi}{2}\sin\alpha, \vartheta(q)=\frac{3\pi}{2}\sin\alpha
            				\]  
            				and in turn we let $\tilde{\Gamma}_2$ be identified with the straight segment connecting them (assuming that $\Gamma''$ is the straight segment gotten from $\tilde{\Gamma}_2$ by rescaling via a factor $r_0$). For $\delta>0$ small, to be fixed later, let us consider the ellipse centered at the midpoint $m$ of $p, q$ (that is to say $\rho(m)=r_0\cos \left(\frac{\pi}{2}\sin\alpha\right), \vartheta(m)=\pi\sin\alpha$) and axes of length  $(1+\delta)r_0\sin\left(\frac{\pi}{2}\sin\alpha\right)$ (parallel to $\tilde{\Gamma}_2$) and $(1-\delta)r_0\cos\left(\frac{\pi}{2}\sin\alpha\right)$ (orthogonal to $\tilde{\Gamma}_2$ ): such ellipse has positive geodesic curvature in the flat metric the wedge is endowed with and, since this model is isometric to the (limit) cone $C_{\alpha}$, we can just pick $D\subset S$ to be the image of the interior of this ellipse under the identification map above. The decay assumptions on $e$ (the \textsl{error terms} of the metric $g$) ensure that this domain will indeed be mean-convex for $r\geq \overline{r}_0$ sufficiently large and of course we can pick $\delta>0$ small enough that $D$ does not cover the whole $\Gamma_1$, but just small neighborhoods of the common vertices of $\Gamma_1$ and $\tilde{\Gamma}_2$, cf. Figure \ref{fig:ellisse}.
            				
            				\begin{figure}[htbp]
            					\begin{center}
            						\input{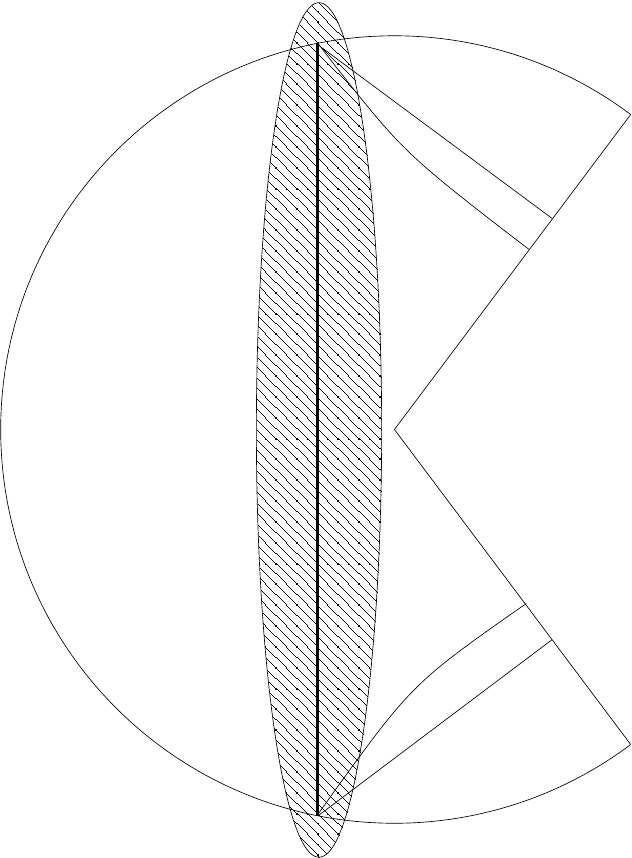_t}
            					\end{center}
            					\caption{The shadowed region is $D$ in the coordinates $(\rho, \vartheta)$. The lines $\tilde{\Gamma}_1$ and $\tilde{\Gamma}_2$ rescaled by a factor $r_0$ coincide with $\Gamma'$ and $\Gamma''$, respectively.}\label{fig:ellisse}
            				\end{figure}
            				
            				That being said, standard direct methods ensure the existence of a smooth geodesic $\Gamma_2$ connecting $p$ to $q$ and having shortest length among those contained in $D$. Being locally length-minimizing, $\Gamma_2$ cannot have self-intersections which means it has to be a simple geodesic.
            				By construction (specifically: by the choice of $\delta$) the rescalings of those $\Gamma_2$ (as we let $r_0\to+\infty$ and rescale by $r_0$) must locally converge to a geodesic of $C_{\alpha}$ which cannot be either $\Gamma'$ or $\Gamma'''$ and hence must be $\Gamma''$, as we had claimed. Lastly: $\Gamma_1$ and $\Gamma_2$ cannot intersect for, if they did, we could shorten the length of either of them by means of local cut-and-paste operations, thereby violating their minimizing properties. This completes the proof. 
            			\end{proof}

            			We can now employ the result obtained in Section \ref{sec:finscale} to produce a third geodesic segment connecting any two fixed antipodal points $p, q$ on $(S,g)$. From now onwards, due to the limit arguments we will have to perform, it is useful to make the dependence on $(r_0,\varphi_0)$ explicit for all objects we deal with: in particular we shall write $p^{(r_0,\varphi_0)},q^{(r_0,\varphi_0)}$ and denote by $\Gamma_1^{(r_0,\varphi_0)},\ \Gamma_2^{(r_0,\varphi_0)}$ the (supports of the) geodesic segments constructed above. 
            			Let us further denote by $\gamma_1^{(r_0,\varphi_0)}:[0,1]\to S$ (resp. $\gamma_2^{(r_0,\varphi_0)}:[0,1]\to S$) a parametrization of $\Gamma_1^{(r_0,\varphi_0)}$ (resp. $\Gamma_2^{(r_0,\varphi_0)}$) by a constant multiple of the corresponding arc-length paramater. 
            			Let then the spaces $X (r_0,\varphi_0), \Sigma(r_0,\varphi_0)$ and the min-max value $\Lambda(r_0,\varphi_0)$ be defined as above.
            			
            			A crucial remark is that, due to the fact that $S$ is diffeomorphic to $\mathbb{R}^2$ (by virtue of Theorem \ref{lem:pmt2}) the class $\Sigma(r_0,\varphi_0)$ is not empty.

            			\begin{prop}\label{pro:minmaxgeod} In the setting described above, we have
            				\[
            				\liminf_{r_0\to+\infty} \frac{\Lambda(r_0,\varphi_0)}{r^2_0}\geq 4.
            				\]
            				
            				As a result, for every $r_0\geq \overline{r}_0$ there exists a third simple geodesic $\Gamma_3^{(r_0,\varphi_0)}$ connecting $p^{(r_0,\varphi_0)}$ to $q^{(r_0,\varphi_0)}$ whose parametrization $\gamma_3: [0,1]\to S$ has constant speed (a constant multiple of the arc-length parameter) and attains the min-max value $\Lambda(r_0,\varphi_0)$, namely $\gamma_3^{(r_0,\varphi_0)}\in X(r_0,\varphi_0)$ and
            				\[
            				E(\gamma_3^{(r_0,\varphi_0)})=\Lambda(r_0,\varphi_0).
            				\]
            				Furthermore $\gamma_3^{(r_0,\varphi_0)}$ has Morse index less or equal than one for the functional $E$.
            			\end{prop}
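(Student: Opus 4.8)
The plan is to first establish the asymptotic estimate $\liminf_{r_0\to+\infty}\Lambda(r_0,\varphi_0)/r_0^2\ge 4$ by an intersection-theoretic argument in the plane, and then to produce $\Gamma_3^{(r_0,\varphi_0)}$ by a direct application of Proposition \ref{prop:finscale}, whose mountain-pass hypothesis will follow from this estimate together with the length asymptotics recorded in Lemma \ref{lem:second} (note also that $\Lambda(r_0,\varphi_0)<\infty$, since any smooth sweep-out of the compact region bounded by $\Gamma_1^{(r_0,\varphi_0)},\Gamma_2^{(r_0,\varphi_0)}$ has bounded energy). To begin Step 1, fix once and for all a radius $R$ (depending only on $(S,g)$) so large that the decay of Definition \ref{def:acon} yields $g^{rr}\le 1+Cr^{-\mu}$ on $\{r\ge R\}$, and write $S_\rho:=\Phi^{-1}(\{r=\rho\})$. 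Estimating the $g$-arclength of a path from below by the variation of its radial coordinate, one obtains for $r_0$ large
\[
\mathrm{dist}_g(S_{r_0},S_R)\ \ge\ (r_0-R)-C\int_R^{r_0}r^{-\mu}\,dr\ =\ r_0\big(1-o(1)\big),
\]
uniformly in $\varphi_0$ and for every $\mu>0$ (the integral being $O(r_0^{1-\mu})$, $O(\log r_0)$ or $O(1)$ according as $\mu<1$, $\mu=1$ or $\mu>1$).

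Next I would show that \emph{every} competitor homotopy has a slice crossing $S_R$. Take an arbitrary $H\in\Sigma(r_0,\varphi_0)$; by the Sobolev embedding $W^{1,2}([0,1],S)\hookrightarrow C^0([0,1],S)$ the map $(s,t)\mapsto H(s)(t)$ is continuous on $[0,1]^2$. Fix any $z_0\in Z$ and, for $s\in[0,1]$, let $L_s$ be the loop based at $p^{(r_0,\varphi_0)}$ obtained by concatenating $H(s)$ with the reverse of $\gamma_2^{(r_0,\varphi_0)}$; using the diffeomorphism $S\cong\mathbb{R}^2$ of Theorem \ref{lem:pmt2}, the winding number $w(L_s,z_0)$ is defined whenever $z_0\notin\mathrm{spt}(H(s))$. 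Now $L_1$ is an out-and-back excursion along $\gamma_2^{(r_0,\varphi_0)}$ (whose support misses $z_0$), hence null-homotopic in $S\setminus\{z_0\}$, so $w(L_1,z_0)=0$; on the other hand $L_0$ parametrizes the Jordan curve $\Gamma_1^{(r_0,\varphi_0)}\cup\Gamma_2^{(r_0,\varphi_0)}$, which for $r_0$ large is geometrically close, after rescaling by $r_0^{-1}$, to $\Gamma'\cup\Gamma''$; since the latter encircles the vertex of $C_\alpha$ (an explicit fact, following from Lemma \ref{lem:geodcon} and the unfolding of the cone), $\Gamma_1^{(r_0,\varphi_0)}\cup\Gamma_2^{(r_0,\varphi_0)}$ is non-contractible in $\Phi^{-1}(\{r>R\})$ and the bounded region it encloses contains $S\setminus\Phi^{-1}(\{r>R\})\ni z_0$, whence $w(L_0,z_0)=\pm1$. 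If $z_0$ lay on no $\mathrm{spt}(H(s))$, then $s\mapsto w(L_s,z_0)$ would be a continuous $\mathbb{Z}$-valued function on $[0,1]$, hence constant, contradicting $w(L_0,z_0)=\pm1\ne 0=w(L_1,z_0)$; therefore some slice $\gamma^\ast:=H(s^\ast)$ passes through $z_0$. As $S_R$ separates $p^{(r_0,\varphi_0)},q^{(r_0,\varphi_0)}\in\Phi^{-1}(\{r>R\})$ from $z_0\in Z$, the curve $\gamma^\ast$ must cross $S_R$ both on its way from $p^{(r_0,\varphi_0)}$ to $z_0$ and on its way from $z_0$ to $q^{(r_0,\varphi_0)}$, so $\mathrm{length}(\gamma^\ast)\ge 2\,\mathrm{dist}_g(S_{r_0},S_R)$ and, by Cauchy--Schwarz, $E(\gamma^\ast)\ge 4\,\mathrm{dist}_g(S_{r_0},S_R)^2$. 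Since $H$ was arbitrary, $\Lambda(r_0,\varphi_0)\ge 4\,\mathrm{dist}_g(S_{r_0},S_R)^2\ge 4r_0^2(1-o(1))$, which proves the first assertion.

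For the second assertion, apply Proposition \ref{prop:finscale} with $N=S$, $p=p^{(r_0,\varphi_0)}$, $q=q^{(r_0,\varphi_0)}$, and the pair $\gamma_1^{(r_0,\varphi_0)},\gamma_2^{(r_0,\varphi_0)}$ of Lemma \ref{lem:second}: these are disjoint embedded geodesics, and for $r_0$ large they meet transversally at their common endpoints (being, after rescaling, $C^1$-close to $\Gamma',\Gamma''$, which meet transversally and avoid the vertex), so the region $\overline\Omega$ they bound in $S\cong\mathbb{R}^2$ is $C^1$-diffeomorphic to $\mathbb{D}_+$, i.e.\ of half-disk type. The mountain-pass condition holds for $r_0$ large: by Lemma \ref{lem:second} and Lemma \ref{lem:limgeod} the lengths of $\Gamma_i^{(r_0,\varphi_0)}$ equal $2r_0\sin\left(\frac{\pi}{2}\sin\alpha\right)(1+o(1))$, so --- the $\gamma_i^{(r_0,\varphi_0)}$ having constant speed --- $E(\gamma_i^{(r_0,\varphi_0)})=\mathrm{length}(\Gamma_i^{(r_0,\varphi_0)})^2=4r_0^2\sin^2\left(\frac{\pi}{2}\sin\alpha\right)(1+o(1))$, and since $\alpha\in(0,\pi/2)$ forces $\sin^2\left(\frac{\pi}{2}\sin\alpha\right)<1$, comparison with the previous estimate gives
\[
\Lambda(r_0,\varphi_0)\ \ge\ 4r_0^2\big(1-o(1)\big)\ >\ 4r_0^2\sin^2\left(\frac{\pi}{2}\sin\alpha\right)\big(1+o(1)\big)\ \ge\ \max\big\{E(\gamma_1^{(r_0,\varphi_0)}),E(\gamma_2^{(r_0,\varphi_0)})\big\}
\]
for all $r_0\ge\overline r_0$, after possibly enlarging the constant $\overline r_0$ of Lemma \ref{lem:second}. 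Proposition \ref{prop:finscale} then yields a constant-speed parametrized embedded geodesic $\gamma_3^{(r_0,\varphi_0)}:[0,1]\to S$ from $p^{(r_0,\varphi_0)}$ to $q^{(r_0,\varphi_0)}$ with $E(\gamma_3^{(r_0,\varphi_0)})=\Lambda(r_0,\varphi_0)$ and Morse index at most one; being embedded it is simple, and having energy strictly larger than those of $\gamma_1^{(r_0,\varphi_0)},\gamma_2^{(r_0,\varphi_0)}$ it is genuinely distinct from them. Setting $\Gamma_3^{(r_0,\varphi_0)}:=\mathrm{spt}(\gamma_3^{(r_0,\varphi_0)})$ finishes the proof.

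The crux is the first step, specifically the topological input that \emph{every} competitor homotopy in $\Sigma(r_0,\varphi_0)$ has a slice meeting the compact core $Z$: this rests on knowing that $\Gamma_1^{(r_0,\varphi_0)}\cup\Gamma_2^{(r_0,\varphi_0)}$ genuinely encircles $Z$, which in turn relies on the blow-down description of $\Gamma_1,\Gamma_2$ in Lemma \ref{lem:second} and on the explicit fact that $\Gamma'\cup\Gamma''$ separates the vertex of $C_\alpha$ from infinity; one must also verify that the $o(1)$ in the length estimate is uniform with respect to $H$ (and to $\varphi_0$) and that nothing degenerates for small asymptotic decay rates $\mu$.
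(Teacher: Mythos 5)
Your proposal is correct and follows essentially the same strategy as the paper: every sweepout in $\Sigma(r_0,\varphi_0)$ is forced, for topological reasons, to have a slice passing through a fixed point in the compact core (the paper picks $o\in\cap_{r_0\ge\overline r_0}\Omega(r_0,\varphi_0)$ and uses a degree argument, you use a winding-number argument with $z_0\in Z$ --- the same fact in different clothing), whence that slice has length at least $2r_0(1-o(1))$ and energy at least $4r_0^2(1-o(1))$, and the mountain-pass gap against $E(\gamma_i^{(r_0,\varphi_0)})=4r_0^2\sin^2\left(\frac{\pi}{2}\sin\alpha\right)(1+o(1))$ lets you invoke Proposition \ref{prop:finscale}. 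The only cosmetic difference is that the paper derives the length lower bound from the blow-down convergence of the broken geodesic through $o$ to the singular radial geodesic $\Gamma'''$ (via Lemma \ref{lem:clairaut}), whereas you estimate $\mathrm{dist}_g(S_{r_0},S_R)$ directly from the metric decay; both yield the same asymptotics.
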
	
            			
            			\begin{proof} We need to start by checking that the mountain-pass condition holds, at least when the antipodal points serving as boundary are far away in the asymptotic region.\\
            				
            				\indent Given $\varepsilon>0$, it follows from Lemma \ref{lem:second} that one can find a possibly larger $\overline{r}_0$ such that
            				\[
            				\frac{E\left(\gamma_i^{(r_0,\varphi_0)}\right)}{r^2_0}\leq 4(1+\varepsilon)\sin^2\left(\frac{\pi}{2}\sin\alpha \right), \ \ i=1,2.
            				\]
            				for every $r_0\geq \overline{r}_0$.
            				On the other hand, we claim that necessarily
            				\[
            				\frac{\Lambda(r_0,\varphi_0)}{r^2_0}\geq 4(1-\varepsilon).
            				\]
            				To see this, let us start by observing that due to the natural embedding $W^{1,2}([0,1],S)\hookrightarrow C([0,1], S)$ the set $\Sigma(r_0,\varphi_0)$ is included in the class of (continuous) homotopies connecting $\gamma_1^{(r_0,\varphi_0)}$ with $\gamma_2^{(r_0,\varphi_0)}$: given $H\in\Sigma$ we can simply set $\tilde{H}:[0,1]\times [0,1]\to S, \ \tilde{H}(s,t)=H(s)(t)$. Thus, considered for any fixed large $r_0$ the region $\Omega=\Omega(r_0,\varphi_0)$ bounded by $\Gamma_1$ and $\Gamma_2$ (which, let us recall, is homeomorphic to a topological disk), and fixed $o\in \cap _{r_0\geq \overline{r}_0}\Omega(r_0,\varphi_0)$ for any $H\in\Sigma$ one can find a couple $(s_0,t_0)\in (0,1)\times (0,1)$ such that $H(s_0,t_0)=o$. Said $\gamma_{s_0}^{(r_0,\varphi_0)}:[0,1]\to S$ the corresponding path (namely: $\gamma_{s_0}^{(r_0,\varphi_0)}=H(s_0)$) we claim that
            				\[
            				\frac{E(\gamma_{s_0}^{(r_0,\varphi_0)})}{r^2_0}\geq 4(1-\varepsilon)
            				\] 
            				which would of course imply the claim given the fact that $H$ is an arbitrary element in the class $\Sigma$. This is shown as follows: said $\tilde{\gamma}^{(r_0,\varphi_0)}:[0,1]\to S$ the broken geodesic gotten by taking a length-minimizing curve connecting $p$ to $o$ concatenated to a length-minimizing curve connecting $o$ to $q$, the concatenation being smoothened near the junction point, we have (by Cauchy-Schwarz)
            				\[
            				\frac{E(\gamma_{s_0}^{(r_0,\varphi_0)})}{r^2_0}\geq \frac{L^2(\gamma_{s_0}^{(r_0,\varphi_0)})}{r^2_0}\geq \frac{L^2(\tilde{\gamma}^{(r_0,\varphi_0)})}{r^2_0}\geq 4(1-\varepsilon)
            				\] 
            				where the last inequality follows from the observation that as we rescale by a factor $r_0$ the support $\tilde{\Gamma}$ (which must locally converge to a geodesic) can only converge to $\Gamma'''$ by Lemma \ref{lem:clairaut} together with the fact that the choice of $o$ is independent of $r_0$.
            				That being gained, let us choose once and for all $\varepsilon>0$ (depending only on $(S,g)$, in fact just on the asymptotic opening angle $\alpha$) such that
            				\[
            				\frac{1-\varepsilon}{1+\varepsilon}> \sin^2\left(\frac{\pi}{2}\sin\alpha \right).
            				\]
            				This ensures that the mountain-pass gap condition required by Proposition \ref{prop:finscale} is satisfied and so we immediately derive the existence of a critical point $\gamma_3^{(r_0,\varphi_0)}\in X$ attaining the min-max value $\Lambda(r_0,\varphi_0)$. Lastly, we notice that $\gamma_3^{(r_0,\varphi_0)}$ being a critical-point of $E(\cdot)$ is also a critical point of $L(\cdot)$, parametrized by constant speed. It follows from our argument above that
            				\[
            				\liminf_{r_0\to+\infty} \frac{\Lambda(r_0,\varphi_0)}{r^2_0}\geq 4(1-\varepsilon)
            				\]
            				but of course this applies to any $\varepsilon$ as small as we wish, so the last assertion follows at once.
            			\end{proof}	
            			
            			In order to rule out \textsl{concentration phenomena} when taking limits of min-max geodesic segments (as the endpoints drift off to infinity), we need the following lemma, which concerns the explicit description of effective sweepouts and thereby provides an upper bound on the min-max value.
            			
            			\begin{lem}\label{lem:upperb} (Setting as above.) For every $\varphi_0\in S^1$ one has that the min-max values satisfy
            				\[
            				\limsup_{r_0\to+\infty}\frac{\Lambda(r_0,\varphi_0)}{r^2_0}\leq 4.
            				\]
            			\end{lem}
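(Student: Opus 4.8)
\textbf{Proof proposal for Lemma \ref{lem:upperb}.}

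The plan is to construct an explicit competitor homotopy $H \in \Sigma(r_0,\varphi_0)$ whose maximal energy, once divided by $r_0^2$, is at most $4+o(1)$ as $r_0 \to +\infty$; since $\Lambda(r_0,\varphi_0)$ is the infimum of the max-energy over $\Sigma$, this gives the stated bound. The natural place to build such a sweepout is the limit cone $C_\alpha$, where every geodesic connecting two antipodal points is known explicitly (Lemma \ref{lem:geodcon}): the two smooth ones $\Gamma',\Gamma''$ and the singular radial one $\Gamma'''$ through the vertex. The key observation is that $\Gamma'$ and $\Gamma''$ are homotopic (with fixed endpoints, inside $C_\alpha$) through a one-parameter family of curves that first \emph{slides out} to pass near infinity rather than through the vertex; passing near the vertex would cost length $\approx 2r_0$ and hence energy $\approx 4r_0^2$, which is precisely the threshold we are aiming for. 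So I would choose the sweepout to degenerate, at its ``top'', to a curve that runs out along one of the two rays $\{\varphi = \varphi_0\}$, around a large circle, and back in along $\{\varphi=\varphi_0+\pi\}$ — no, wait: that goes through infinity, not through the vertex, and one must check it does \emph{not} cost more. In fact the cleanest choice is the family of curves in the flat wedge model (Remark \ref{rem:flatwedge}): unfold $C_\alpha \setminus \{l\}$ to a planar wedge of angle $2\pi\sin\alpha$, place $p,q$ as in the proof of Lemma \ref{lem:second}, and sweep out by straight-ish curves that bulge progressively away from the vertex. The maximal-length curve in this family is the boundary arc of the wedge itself going ``the long way'', whose length is controlled; a direct computation in polar coordinates $(\rho,\vartheta)$ shows the supremal energy along this family, rescaled by $r_0^{-2}$, tends to $4$ (it is $4$ because the degenerate extreme is the radial path $\Gamma'''$ of length $2r_0$, run at constant speed).

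Concretely I would proceed in three steps. \textbf{Step 1: build the model sweepout on $C_\alpha$.} Work in the wedge coordinates, normalize $r_0=1$, and exhibit an explicit $C^1$ (indeed piecewise-smooth, then smoothed) family $\{\sigma_s\}_{s\in[0,1]}$ in $W^{1,2}$ with $\sigma_0$ a constant-speed parametrization of $\Gamma'$, $\sigma_1$ one of $\Gamma''$, each $\sigma_s$ joining $p$ to $q$, and $\max_s E(\sigma_s) \le 4 + \eta$ for any preassigned $\eta>0$ — the point being that one can route the interpolating curves so that they never come within a definite distance of the vertex and also stay within a bounded region, keeping lengths (hence energies, at constant speed) strictly below $2+\eta'$ so that energies stay below $4+\eta$. \textbf{Step 2: transplant to $(S,g)$.} Rescale $\sigma_s$ by the factor $r_0$ and pull it over to $S$ via the asymptotic-coordinate diffeomorphism $\Phi$, obtaining curves $\gamma_s^{(r_0,\varphi_0)}$ joining $p^{(r_0,\varphi_0)}$ to $q^{(r_0,\varphi_0)}$. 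Because the metric error terms $e$ are $O_2(r^{-\mu})$, the energy of the transplanted curve differs from $r_0^2$ times the cone-energy of $\sigma_s$ by a multiplicative factor $1+o(1)$ as $r_0\to+\infty$, uniformly in $s$ (here one uses that all the $\sigma_s$ live in a fixed compact annulus of the cone, so the relevant region on $S$ is $\{r \in [c_1 r_0, c_2 r_0]\}$ where the error is uniformly small). \textbf{Step 3: fix the endpoints.} The curves $\gamma_0^{(r_0,\varphi_0)}$ and $\gamma_1^{(r_0,\varphi_0)}$ produced this way are close to, but not literally equal to, the geodesics $\Gamma_1^{(r_0,\varphi_0)},\Gamma_2^{(r_0,\varphi_0)}$ fixed in Lemma \ref{lem:second}; prepend and append short homotopies (inside small balls near $p$ and $q$, or using that both pairs of curves converge to the same cone geodesics after rescaling) connecting $\Gamma_i^{(r_0,\varphi_0)}$ to $\gamma_{\text{endpoint}}^{(r_0,\varphi_0)}$ without raising the energy above $4r_0^2(1+o(1))$. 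Concatenating yields a genuine element of $\Sigma(r_0,\varphi_0)$, so $\Lambda(r_0,\varphi_0) \le \max_s E(\cdot) \le 4r_0^2(1+o(1))$, and dividing by $r_0^2$ and taking $\limsup$ finishes the proof.

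The main obstacle I anticipate is \textbf{Step 1}: one must be genuinely careful that the interpolation between $\Gamma'$ and $\Gamma''$ can be carried out with \emph{every} intermediate curve having energy $\le 4+\eta$. Naively, one might fear that to get from $\Gamma'$ to $\Gamma''$ one is forced to sweep across the vertex, and the curve passing exactly through the vertex is $\Gamma'''$, of length $2$ and energy (at constant speed) exactly $4$ — so the bound $4$ is sharp and there is no slack to spare. The resolution is that one does \emph{not} pass through the vertex: in the wedge picture $\Gamma'$ and $\Gamma''$ are two chords on opposite sides of the vertex, and one interpolates through chords that bulge \emph{outward}, away from the vertex, staying in a bounded region; the supremal length along such a family can be made arbitrarily close to (but the relevant universal constant governing it is) $2$, hence supremal energy close to $4$. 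Making this quantitative — writing down the family, bounding its lengths, and checking the $W^{1,2}$ continuity and the endpoint conditions — is the one place requiring actual computation; everything after that is a perturbation argument driven by the $O_2(r^{-\mu})$ decay. A secondary, purely technical nuisance is smoothing the piecewise-geodesic interpolants and the concatenations at $p,q$ so the final $H$ is continuous into $X = W^{1,2}$; this is routine but should be mentioned.
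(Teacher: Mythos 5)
There is a genuine gap, and it sits exactly where you located the ``main obstacle'': your resolution of Step 1 is topologically impossible. In $C_\alpha=\mathbb{R}^2\setminus\{0\}$ the concatenation of $\Gamma'$ with the reverse of $\Gamma''$ is a closed curve of winding number $\pm 1$ about the puncture, so $\Gamma'$ and $\Gamma''$ are \emph{not} homotopic rel endpoints inside $C_\alpha$ at all: there is no family of ``chords that bulge outward, away from the vertex, staying in a bounded region'' joining them, and the around-infinity variant fails for the same winding-number reason (besides having unbounded length). The homotopy class is only trivial on $S$ because the puncture is filled by the compact core $Z$ (Theorem \ref{lem:pmt2}), and as a consequence \emph{every} admissible sweepout must at some time pass through a fixed point $o$ of the region bounded by $\Gamma_1\cup\Gamma_2$ --- this is precisely the mechanism behind the lower bound $\Lambda(r_0,\varphi_0)\geq 4(1-\varepsilon)r_0^2$ in Proposition \ref{pro:minmaxgeod}. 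A sweepout confined to a fixed coordinate annulus and avoiding the core, with supremal length close to $2\sin\left(\frac{\pi}{2}\sin\alpha\right)r_0<2r_0$, would destroy the mountain-pass gap and contradict that lower bound; it cannot exist.

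The repair is to accept that the sweepout must cross the core and to arrange that it does so as cheaply as possible, which is what the paper does. One slides the chord \emph{inward}: for intermediate times the competitor curve consists of two nearly radial segments descending from radius $\rho_+$ to a radius $\rho_s$, joined by the straight chord at scale $\rho_s$; its length is about $2\left((\rho_+-\rho_s)+\rho_s\sin\left(\frac{\pi}{2}\sin\alpha\right)\right)\leq 2\rho_+$, increasing as $\rho_s$ decreases. Once $\rho_s$ reaches a \emph{fixed} inner scale $\rho_-$, one flips from the $\Gamma'$-side to the $\Gamma''$-side using an arbitrary fixed homotopy in the core, whose length cost is a constant $C$ independent of $\rho_+$, and then slides back out. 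The maximal length over the sweepout is thus $2(1+\varepsilon)(\rho_+-\rho_-)+C$, and after constant-speed reparametrization the maximal energy is at most $4(1+\varepsilon)^2\rho_+^2+O(\rho_+)$. This makes transparent why the sharp constant is $4$: it is the square of the length $2$ of the rescaled radial geodesic $\Gamma'''$, over which the sweepout is forced to pass rather than around which it can be routed. Your Steps 2 and 3 (transplanting via the $O_2(r^{-\mu})$ decay and matching endpoints/smoothing) are fine and essentially as in the paper, but they cannot rescue Step 1 as stated.
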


            			\begin{rmk}
            				For the following proof, it turns out to be more convenient to work with the coordinates $\left\{\rho,\vartheta\right\}$	defined in Remark \ref{ref:wedgecoor2} and to treat $(S,g)$ (minus a compact set) as an Euclidean wedge of angle $2\pi\sin\alpha$ with pointwise identification of the two sides. 
            			\end{rmk}	
            			
            			\begin{proof}
            				Given $\varepsilon>0$ fix (once and for all) a large scale $\rho_{-}$ such that the metrics $g$ and $\delta$ differ on the domain of $(S,g)$ defined in coordinates by $\rho\geq \rho_{-}/2$ for an amount less than $\varepsilon^2$ in $C^{2}$ norm. Consider the two linear segments joining $p_{-}$ to $q_{-}$ (where $\rho(p_{-})=\rho(q_{-})=\rho_{-}$ and $\vartheta({p_{-}})=\pi\sin\alpha$ while $\vartheta({q_{-}})\in \left\{0,2\pi\sin\alpha \right\}$ since the latter is represented by two points that are geometrically identified): such paths can be parametrized by means of the coordinate equations given by
            				\[
            				\gamma^{-}_{1}(t)=\left(\rho_{-}\frac{\sin\left\{\frac{\pi}{2}\left(1-\sin\alpha\right) \right\}}{\sin\left\{\frac{\pi}{2}\left(1+(1-2t)\sin\alpha\right)\right\}}, (1+t)\pi\sin\alpha \right)
            				\]
            				and
            				\[
            				\gamma^{-}_{2}(t)=\left(\rho_{-}\frac{\sin\left\{\frac{\pi}{2}\left(1-\sin\alpha\right) \right\}}{\sin\left\{\frac{\pi}{2}\left(1+(1-2t)\sin\alpha\right)\right\}}, (1-t)\pi\sin\alpha \right)
            				\]	
            				as can be checked by means of some elementary trigonometry.
            				Furthermore, let $H^{-}\in\Sigma$ be an homotopy connecting $\gamma^{-}_1$ to $\gamma^{-}_{2}$ (whose existence is a consequence of Theorem \ref{lem:pmt2}).
            				We shall now define an homotopy $H$ connecting $\gamma^{+}_1$ to $\gamma^{+}_2$, (parametrizations of) the two stable geodesics constructed above (see Lemma \ref{lem:second}) for endpoints $p_{+}, q_{+}$ having coordinates $\rho(p_{+})=\rho(q_{+})=\rho_{+}$ and $\vartheta({p_{+}})=\pi\sin\alpha$ while $\vartheta({q_{+}})\in \left\{0,2\pi\sin\alpha \right\}$ where $\rho_{+}$ is a \textsl{free} large scale, much larger than $\rho_{-}$ (so that we will then take $\rho_{+}\to+\infty$). To that aim, let us recall that $\Gamma_1, \Gamma_2$ converge geometrically, when rescaled as explained above, respectively to $\Gamma',\Gamma''$ so that (for large $\rho_{+}$ but uniformly in $\vartheta$) we can find constant speed parametrizations of $\Gamma_1, \Gamma_2$ (which we shall indeed call $\gamma^{+}_{1}$, $\gamma^{+}_{2}$) such that $\left\|\gamma^{+}_{i}-\check{\gamma}^{+}_{i} \right\|_X<\rho_{+}\varepsilon^2$ for $i=1,2$ where
            				\[
            				\check{\gamma}^{+}_{1}(t)=\left(\rho_{+}\frac{\sin\left\{\frac{\pi}{2}\left(1-\sin\alpha\right) \right\}}{\sin\left\{\frac{\pi}{2}\left(1+(1-2t)\sin\alpha\right)\right\}}, (1+t)\pi\sin\alpha \right)
            				\]
            				and
            				\[
            				\check{\gamma}^{+}_{2}(t)=\left(\rho_{+}\frac{\sin\left\{\frac{\pi}{2}\left(1-\sin\alpha\right) \right\}}{\sin\left\{\frac{\pi}{2}\left(1+(1-2t)\sin\alpha\right)\right\}}, (1-t)\pi\sin\alpha \right)\, .
            				\]	
            				
            				The homotopy $H$ will not belong to the class $\Sigma$ (due to corner points) but will be then be approximated by suitable smoothings while keeping length (and energy) under control.
            				Precisely, we let $H:[0,1]\times [0,1]\to S$ be defined by
            				\[ H(s,t)=
            				\begin{cases}
            				(1-8s)\gamma^{+}_{1}(t)+8s\check{\gamma}^{+}_1(t) & \textrm{for} \ 0\leq s\leq \frac{1}{8} \\
            				\\
            				\gamma^{8\left(s-1/8\right),down}_1 \ast \gamma^{8\left(s-1/8\right),tan}_1 \ast \gamma^{8\left(s-1/8\right),up}_1 (t)
            				& \textrm{for} \ \frac{1}{8}\leq s\leq \frac{1}{4} \\
            				\\
            				\gamma^{1,down}_1 \ast H^{-}\left(2\left(s-\frac{1}{4}\right),t\right)\ast\gamma^{1,up}_1 (t) & \textrm{for} \ \frac{1}{4}\leq s\leq \frac{3}{4} \\
            				\\
            				\gamma^{8\left(s-3/4\right),down}_2 \ast \gamma^{8\left(s-3/4\right),tan}_2 \ast \gamma^{8\left(s-3/4\right),up}_2 (t)
            				& \textrm{for} \ \frac{3}{4}\leq s\leq \frac{7}{8} \\
            				\\
            				8(1-s) \check{\gamma}^{-}_{2}(t)+8\left(s-\frac{7}{8}\right)\gamma^{+}_2(t) & \textrm{for} \ \frac{7}{8}\leq s\leq 1
            				\end{cases}
            				\]
            				where $\ast$ denotes the standard concatenation of paths, and we have set (once again in coordinates $\left\{\rho,\vartheta\right\}$)
            				\[
            				\gamma^{s,down}_1(t)= \left(t((1-s)\rho_{+}+s\rho_{-})+(1-t)\rho_{+}, \pi\sin\alpha\right),
            				\]
            				\[ 
            				\gamma^{s,down}_2(t)= \left(t((1-s)\rho_{+}+s\rho_{-})+(1-t)\rho_{+}, \pi\sin\alpha\right),
            				\]
            				\[
            				\gamma^{s,up}_1(t)= \left(\left(1-t\right)\left(\left(1-s\right)\rho_{+}+s\rho_{-}\right)+t\rho_{+}, 2\pi\sin\alpha\right), 
            				\]
            				\[
            				\gamma^{s,up}_2(t)= \left(\left(1-t\right)\left(\left(1-s\right)\rho_{+}+s\rho_{-}\right)+t\rho_{+},0 \right) 
            				\]
            				and
            				\[
            				\gamma^{s,tan}_{1}(t)=\left(\left((1-s)\rho_{+}+s\rho_{-} \right)\frac{\sin\left\{\frac{\pi}{2}\left(1-\sin\alpha\right) \right\}}{\sin\left\{\frac{\pi}{2}\left(1+(1-2t)\sin\alpha\right)\right\}}, (1+t)\pi\sin\alpha\right)
            				\]
            				\[
            				\gamma^{s,tan}_{2}(t)=\left(\left((1-s)\rho_{+}+s\rho_{-} \right)\frac{\sin\left\{\frac{\pi}{2}\left(1-\sin\alpha\right) \right\}}{\sin\left\{\frac{\pi}{2}\left(1+(1-2t)\sin\alpha\right)\right\}}, (1-t)\pi\sin\alpha\right).
            				\]
            				For the reader's convenience Figure \ref{fig:omotopia} gives a brief description of the five stages of the homotopy in the $(\rho, \vartheta)$ coordinates. 
            				
            				\begin{figure}[htbp]
            					\begin{center}
            						\input{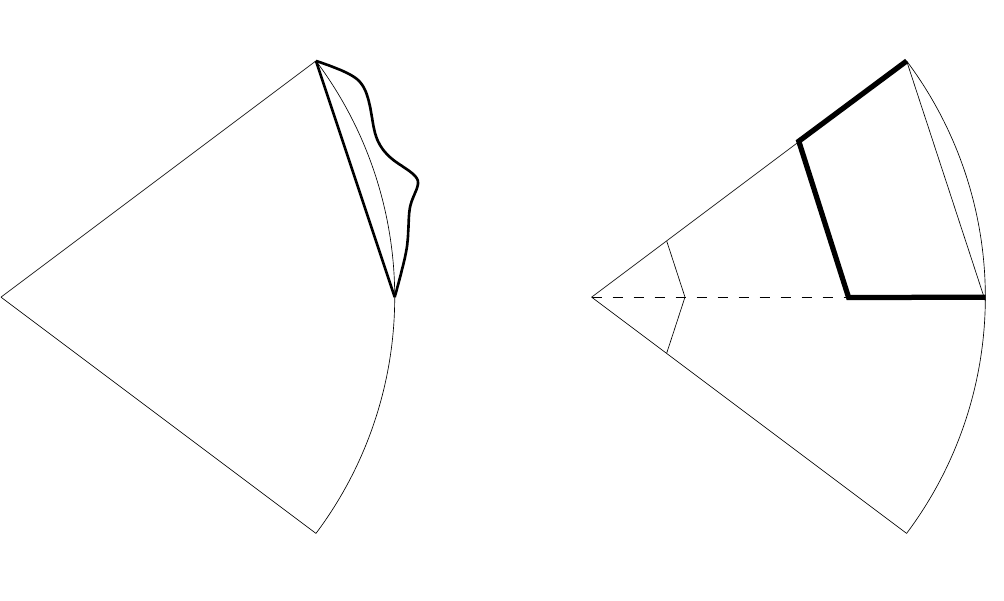_t}
            					\end{center}
            					\caption{On the left, for $0\leq s\leq \frac{1}{8}$ the homotopy $H$ is a linear interpolation of the two curves $\gamma_1^+$ and $\check{\gamma}_1^+$. On the right, the thick piecewise linear line is $H (s, \cdot)$ for some (intermediate) $s\in [\frac{1}{8}, \frac{1}{4}]$.
            						At $s=\frac{1}{4}$ the curve $\gamma_1^{1,tan}$ coincides with $\gamma_1^-$. For $s\in [\frac{1}{4}, \frac{3}{4}]$ the homotopy keeps $\gamma_1^{1,down}$ and $\gamma_1^{1,up}$ fixed and ``swaps'' gradually $\gamma_1^-$ with $\gamma_2^-$. The fourth and fifth phases of the homotopy are then analogous, respectively, to the second and first.}\label{fig:omotopia}
            				\end{figure}
            				
            				Now, straight from the definitions one has, for $\varepsilon$ small enough, the trivial length estimates\footnote{While the curves $H(s,\cdot)$ are not always $C^1$ we still have a natural notion of length, gotten by means of piecewise linear approximations, which coincides with the usual one presented for $C^1$ curves.}
            				\[
            				L(H(s,\cdot))\leq
            				\begin{cases}
            				2(1+\varepsilon)\rho_{+}\sin\left(\frac{\pi}{2}\sin\alpha\right) & \textrm{for} \,  s\in\left[0,\frac{1}{8}\right] \sqcup \left[\frac{7}{8}, 1\right] \\
            				2(1+\varepsilon)\left((\rho_{+}-\rho_{s})+\rho_{s}\sin\left(\frac{\pi}{2}\sin\alpha\right)\right) & \textrm{for} \,  s\in\left[\frac{1}{8},\frac{1}{4}\right] \sqcup \left[\frac{3}{4},\frac{7}{8}\right] \\
            				2(1+\varepsilon)(\rho_{+}-\rho_{-})+C  & \textrm{for} \, s\in\left[\frac{1}{4},\frac{3}{4}\right] 
            				\end{cases}
            				\]
            				where $\rho_s$ stands for $(1-s)\rho_{+}+s\rho_{-}$ evaluated at $8(s-1/8)$ (respectively at $8(s-3/4)$) if $1/8\leq s\leq 1/4$ (respectively $3/4\leq s\leq 7/8$), and $C$ is a constant which does not depend on $\rho_{+}$. It follows that similar length estimates hold true for a suitable smoothing $\tilde{H}$ of $H$ possibly with a marginally worse multiplicative constant (say with $1+2\varepsilon$ in lieu of $1+\varepsilon$) and hence (by assuming, without loss of generality, constant speed parametrizations for the curves $\tilde{H}(s,\cdot), s\in [0,1]$) one can conclude
            				\[
            				\rho^{-2}_{+}\max_{s\in [0,1]} E\left(H\left(s,\cdot\right)\right)\leq 4(1+\varepsilon)^2+\rho^{-2}_{+}C 
            				\]
            				so that letting $\rho_{+}\to+\infty$ one obtains (getting back to the notation of the statement)
            				\[
            				\limsup_{r_0\to +\infty} \frac{\Lambda(r_0,\varphi_{0})}{r_0^2} \leq 4(1+\varepsilon)^2
            				\] 
            				and the arbitrariness of $\varepsilon$ allows to complete the proof.
            				
            			\end{proof}

            			We can now proceed with the proof of the following statement, ensuring the convergence of the sequences of solutions to the fixed-endpoints min-max problem studied above. 
            			
            			\begin{prop}\label{prop:existence} Let $(S,g)$ be an asymptotically conical surface of non-negative scalar curvature and, given a sequence $\left\{r_k\right\}$ with $r_k\nearrow +\infty$, let $p^{(r_k,\varphi_0)}, q^{(r_k,\varphi_0)}$  be a couple of antipodal points such that
            				\[
            				p^{(r_k,\varphi_0)}=q^{(r_k,\varphi_0)}=r_k, \ \varphi(p^{(r_k,\varphi_0)})=\varphi(q^{(r_k,\varphi_0)})-\pi=\varphi_0.
            				\]
            				Then, possibly extracting a subsequence, the geodesic segments $\Gamma_3^{(r_k,\varphi_0)}$ connecting $p^{(r_k,\varphi_0)}$ to $q^{(r_k,\varphi_0)}$ converge to a properly embedded geodesic line $\Gamma^{\left(\varphi_0\right)}_{\infty}$ of Morse index less or equal than one. If $(S,g)$ has positive scalar curvature, then equality holds.
            			\end{prop}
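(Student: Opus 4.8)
The plan is to fix the scale of the segments, rule out their drift to infinity, extract a limit, and then transfer embeddedness and the index estimate to the limit. Combining the lower bound of Proposition~\ref{pro:minmaxgeod} with the upper bound of Lemma~\ref{lem:upperb} gives $\Lambda(r_k,\varphi_0)/r_k^2\to4$; since $\gamma_3^{(r_k,\varphi_0)}$ has constant speed on $[0,1]$ one has $E=L^2$, hence $\Gamma_3^{(r_k,\varphi_0)}$ has length $L_k=2r_k(1+o(1))$ --- asymptotically that of the singular geodesic $\Gamma'''$ of the model cone, and strictly larger than the rescaled length $2\sin(\tfrac{\pi}{2}\sin\alpha)<2$ of each stable segment $\Gamma_i^{(r_k,\varphi_0)}$. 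I will also use that in the asymptotic region $\{r\ge R_0\}$, for $R_0$ large, the function $r$ is convex along geodesics up to lower order terms from the error $e$; this bounds $\max_{\Gamma_3^{(r_k,\varphi_0)}}r$ by $r_k(1+o(1))$ and forces $r$ to be eventually monotone along each end of any geodesic that leaves $\{r\le R_0\}$.

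The core of the proof --- and the step I expect to be hardest --- is the \emph{no-drift} claim: the minima $R_k:=\min_{\Gamma_3^{(r_k,\varphi_0)}}r$ stay bounded. Arguing by contradiction, after passing to a subsequence $R_k/r_k\to c\in[0,1]$. If $c>0$, rescaling by $r_k^{-1}$ makes the surfaces converge to $C_\alpha$ and, by the bound on $\max r$, confines the rescaled segments to a fixed compact annulus bounded away from the vertex; by the convergence theory behind Lemma~\ref{lem:convint} they then converge geometrically, \emph{without loss of length}, to a smooth geodesic of $C_\alpha$ joining two antipodal points at distance $1$ --- that is, to $\Gamma'$ or $\Gamma''$, of length $2\sin(\tfrac{\pi}{2}\sin\alpha)<2$ --- which contradicts $L_k/r_k\to2$. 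If instead $c=0$ while $R_k\to+\infty$, then in the blow-down the rescaled segment \emph{does} approach the vertex and, since no length is lost at a cone point, it converges to $\Gamma'''$, whereas the stable minimizer $\Gamma_1^{(r_k,\varphi_0)}$ converges (say) to $\Gamma'$. For $k$ large $\Gamma_1^{(r_k,\varphi_0)}$ and $\Gamma_3^{(r_k,\varphi_0)}$ are distinct embedded geodesics meeting only at $p$ and $q$ (if not, pass to a first-crossing sub-bigon), bounding a topological disk $B_k$ whose rescaling converges to the flat Euclidean triangle cut out by the chord $\Gamma'$ and the two radii forming $\Gamma'''$. Since that triangle does not contain the vertex in its interior, $B_k$ does not contain the compact core of $S$, hence $B_k\subset\{r\ge R_k\}$ and $\int_{B_k}K\le\int_{\{r\ge R_k\}}K\to0$, using $\int_S K=2\pi(1-\sin\alpha)<\infty$ from the proof of Theorem~\ref{lem:pmt2}. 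On the other hand, Gauss--Bonnet for the geodesic bigon $B_k$ reads $\int_{B_k}K=\beta_p^{(k)}+\beta_q^{(k)}$, where $\beta_p^{(k)},\beta_q^{(k)}$ are the interior angles of $B_k$ at $p$ and $q$; these converge to the base angles $\tfrac{\pi}{2}(1-\sin\alpha)$ of the limiting triangle, so $\int_{B_k}K\to\pi(1-\sin\alpha)>0$, a contradiction. This establishes the claim.

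Granted the no-drift bound, every $\Gamma_3^{(r_k,\varphi_0)}$ meets a fixed compact set, and parametrizing by arc length centred at a point of minimal $r$ the geodesic ODE and Arzel\`a--Ascoli give a subsequence converging in $C^\infty_{\mathrm{loc}}$ to a complete geodesic $\Gamma^{(\varphi_0)}_\infty:\R\to S$ (the domain exhausts $\R$ because $L_k\to+\infty$ with endpoints escaping on both sides). This limit is embedded: a transverse self-intersection, or a period, of $\Gamma^{(\varphi_0)}_\infty$ would by $C^1$-closeness force a self-intersection of $\Gamma_3^{(r_k,\varphi_0)}$ for $k$ large. Properness --- together with the sharp asymptotics --- follows from the fact that the blow-down of $\Gamma_3^{(r_k,\varphi_0)}$ is the pair of opposite radii $\Gamma'''$: combined with the asymptotic convexity of $r$ along geodesics, this shows that past a point of minimal $r$ each end of $\Gamma^{(\varphi_0)}_\infty$ is monotone in $r$, leaves every compact set, and is asymptotic to one of the half-lines $\{\varphi=\varphi_0\}$, $\{\varphi=\varphi_0+\pi\}$; the most delicate point beyond the no-drift step is to exclude that the limit remains trapped in a compact region.

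For the index, suppose $\Gamma^{(\varphi_0)}_\infty$ had index $\ge2$: there would be a two-dimensional space of compactly supported normal fields on which the index form $V\mapsto\int(|\nabla_{\dot\gamma}V|^2-K|V|^2)\,ds$ is negative definite, and transplanting these fields --- which vanish outside a fixed compact interval --- to $\Gamma_3^{(r_k,\varphi_0)}$ and passing to the limit in the index form would contradict the index bound of Proposition~\ref{prop:finscale}; hence the index is at most $1$. When $K>0$ the limit cannot be stable: testing the index form along $\Gamma^{(\varphi_0)}_\infty$ with the normal field $\phi_L N$, where $\phi_L$ equals $1$ on $[-L,L]$ and decays linearly to $0$ on the two adjacent intervals of length $L$, gives $\int(\phi_L')^2\,ds=2/L\to0$ while $\int K\phi_L^2\,ds\ge\int_{-1}^{1}K\,ds>0$, so the index form is negative for $L$ large and the index is at least $1$. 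Therefore it equals $1$, which completes the proof.
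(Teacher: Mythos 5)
Your overall strategy coincides with the paper's: a blow-down plus Gauss--Bonnet argument to prevent the segments from drifting to infinity, extraction of a limit, transplantation of negative directions for the index bound, and the linear cut-off in the stability inequality for the rigidity statement. Your no-drift argument is a legitimate variant of the paper's \textbf{Key Claim 1}: where the paper deduces from $\int_{D^k}K_g\to 0$ that the exterior angles tend to $\pi$ and hence that $\Gamma_3^{(r_k,\varphi_0)}$ would have to blow down to $\Gamma'$ (contradicting the length lower bound), you instead use the assumed blow-down to $\Gamma'''$ to compute the limit of the interior angles of the bigon and contradict $\int_{B_k}K_g\to 0$ directly. (Two small cautions there: when $\Gamma_1$ and $\Gamma_3$ cross in the interior, passing to a ``first-crossing sub-bigon'' changes one of the two angles into a crossing angle, so the clean identity $\int K=\beta_p+\beta_q\to\pi(1-\sin\alpha)$ must be replaced by the argument that both angles of the sub-bigon tend to $0$, which contradicts the definite angle between $\Gamma'$ and $\Gamma'''$ at $p$; and the assertion that $B_k$ misses the compact core does not quite follow from the vertex lying on the boundary of the limit triangle --- though the opposite alternative $B_k\supset\{r<R_k\}$ forces $\int_{B_k}K_g\to 2\pi(1-\sin\alpha)\neq\pi(1-\sin\alpha)$ and is equally contradictory, so this is repairable.)

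The genuine gap is the absence of any analogue of the paper's \textbf{Key Claim 2}, the local length (no-concentration) estimate $\mathscr{H}^1\bigl(\Gamma_3^{(r_k,\varphi_0)}\cap B_\delta(v)\bigr)\leq C$. Everything after your no-drift step silently relies on it. First, your embeddedness argument via $C^1$-closeness does not rule out that the $C^\infty_{\mathrm{loc}}$ limit of the arc-length parametrizations is a closed geodesic, or more generally that the convergence of supports occurs with multiplicity $\geq 2$: an \emph{embedded} arc can perfectly well $C^1$-approximate a closed geodesic traversed many times, or run as two nearly parallel strands over a segment, without ever self-intersecting. Second, properness of $\Gamma^{(\varphi_0)}_\infty$ is exactly the point you flag as ``most delicate'' and then do not resolve: the asymptotic almost-convexity of $r$ only controls the curve in $\{r\geq R_0\}$, the blow-down information lives at scale $r_k$ and says nothing about the fixed-scale limit, and a complete embedded geodesic trapped in the compact core is not excluded by anything you prove. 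Third, the index transfer (transplanting a two-dimensional negative space from $\Gamma_\infty^{(\varphi_0)}$ back to $\Gamma_3^{(r_k,\varphi_0)}$) requires locally graphical convergence with multiplicity one, which is again the content of Key Claim 2. The paper closes all three of these at once by a second Gauss--Bonnet argument: two nearby strands of $\Gamma_3^{(r_k,\varphi_0)}$ over a fixed geodesic segment would bound a thin region $\Omega_k$ with $\int_{\Omega_k}K_g\to\pi$, while $\Omega_k$ is contained in one of the two regions cut out by the circle $\{r=r_k/2\}$, each carrying total curvature tending to $\pi(1-\sin\alpha)<\pi$; since $K_g\geq 0$ this is impossible. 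Note that this step uses the sign of the curvature in an essential way, so it is not a routine compactness fact that can be omitted.
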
	
            			
            			\begin{proof}
            				Let $(S,g)$ be the asymptotically conical surface in question, and let $C_\alpha$ be the corresponding asymptotic cone: by Theorem \ref{lem:pmt2} we can assume, without loss of generality, that $\alpha<\pi/2$ (otherwise the surface is flat $\mathbb{R}^2$ and the conclusion is trivial).
            				\
            				
            				\indent  For antipodal points $p^{(r_k,\varphi_0)}, q^{(r_k,\varphi_0)}$ as in the statement above, 
            				let $\gamma_i^{(r_k,\varphi_0)}: [0,1]\to S$ be constant speed parametrizations of the three gedesic constructed above (as per  Lemma \ref{lem:second} and Proposition \ref{pro:minmaxgeod}).
            				
            				\
            				
            				\textbf{Key claim 1:} there exists an open, bounded set $U\subset S$ such that $U\cap \Gamma^{(r_k,\varphi_0)}_3\neq\emptyset$ for a sequence of sufficiently large values of $k$.
            				
            				\
            				
            				Once this is proven the \textsl{first} conclusion of Proposition \ref{prop:existence} is straightforward, for the family of geodsics $\Gamma_3^{(r_k,\varphi_0)}$ for $r_k\geq\overline{r}_0$, having (patently) local curvature estimates and length estimates (for the latter see the argument below, \textbf{Key Claim 2}),  will converge to some embedded geodesic line $\Gamma^{(\varphi_0)}_{\infty}$ by Lemma \ref{lem:convmult}.
            				
            				\
            				
            				Therefore, we need to prove Key Claim 1. Arguing by contradiction, suppose it were false. That is to say, set for $r_k\geq \overline{r}_0$
            				\[
            				\tilde{r}_{k}=\sup\left\{r>0 \, : \ \ B_{r}(o)\cap \Gamma^{(r_k,\varphi_0)}_i=\emptyset \ \textrm{for} \ i=1,2,3 \right\}
            				\]
            				and assume that
            				\[
            				\sup_{k\geq 1}\tilde{r}_{k}=+\infty.
            				\]
            				(Here $o$ is the reference point defined in the previous proof).
            				It follows that given any value $\tilde{r}$ for $k$ large enough either the closed region bounded by $\Gamma_1^{(r_k,\varphi_0)}, \Gamma_3^{(r_k,\varphi_0)}$ is disjoint from $B_{\tilde{r}}(o)$ or the closed region bounded by $\Gamma_2^{(r_k,\varphi_0)}, \Gamma_3^{(r_k,\varphi_0)}$ is disjoint from $B_{\tilde{r}}(o)$. The argument is in fact identical in the two cases, so let us assume for the sake of definiteness to have to deal with the first one. Notice that we are not claiming that $\Gamma_1^{(r_k,\varphi_0)}$ and $\Gamma_3^{(r_k,\varphi_0)}$ only intersect at the endpoints, so that in particular the interior $\dot{D}^k$ of the region $D^k$ in question could consist of multiple connected components: in order to introduce a convenient notation let us set $\dot{D}^k=\sqcup_{i=0}^{d}\dot{D}^k_{i}$ for some $d\geq 0$ (this is well-defined for ODE uniqueness ensures that two distinct geodesics can only meet transversely, and at finitely many points). Let us first consider the case $d=0$: applying the Gauss-Bonnet theorem to the domain $D^k$ gives
            				\[
            				\int_{D^k} K_g+\nu^{ext}_{p^{(r_k,\varphi_0)}}+\nu^{ext}_{q^{(r_k,\varphi_0)}}       =2\pi
            				\]
            				where $\nu^{ext}_{p^{(r_k,\varphi_0)}}$ (resp. $\nu^{ext}_{q^{(r_k,\varphi_0)}}$) is the exterior angle between $\dot{\gamma}_1$ and $\dot{\gamma}_3$ at $p^{(r_k,\varphi_0)}$ (resp. at $q^{(r_k,\varphi_0)}$). The decay assumption on the metric implies that $ |K_g|(r,\varphi)\leq Cr^{-2-\mu}$ (where $\mu>0$ is the asymptotic decay rate of $(S,g)$ as per Definition \ref{def:acon})
            				and thus necessarily
            				\[
            				\lim_{k\to\infty} \int_{D^k} K_g=0
            				\]
            				because
            				\[
            				\int_{D^k} |K_g| \leq C\int_{\tilde{r}_k}^{r_k} r^{-2-\mu}r \,dr \leq C\int_{\tilde{r}_k}^{+\infty} r^{-2-\mu}r \,dr =\frac{C}{\mu}{\tilde{r}_k}^{-\mu}
            				\]
            				and thanks to the fact that $\tilde{r}_{k}\to+\infty$ as one lets $k\to\infty$, as remarked above.
            				
            				Hence we deduce that
            				\[
            				\lim_{r_0\to+\infty}\nu^{ext}_{p^{(r_k,\varphi_0)}}+\nu^{ext}_{q^{(r_k,\varphi_0)}}  =2\pi
            				\]
            				and since patently $\nu^{ext}_{p^{(r_k,\varphi_0)}}, \nu^{ext}_{q^{(r_k,\varphi_0)}}\in [0,\pi]$ we conclude that in fact
            				\[
            				\lim_{k\to\infty}\nu^{ext}_{p^{(r_k,\varphi_0)}} = \lim_{k\to\infty}\nu^{ext}_{q^{(r_k,\varphi_0)}}=\pi.
            				\]
            				This equation implies that when we rescale by a factor $r_k$ (and let $k\to\infty$), the geodesic $\Gamma_3^{(r_k,\varphi_0)}$ must subsequentially converge to $\Gamma'$ (recall that this operation is conformal). On the other hand, we know (from the proof of Proposition \ref{pro:minmaxgeod} where we constructed $\gamma_3^{(r_k,\varphi_0)}$ as geodesic attaining the min-max value) that
            				\[
            				\liminf_{k\to\infty}\frac{L(\gamma_3^{(r_k,\varphi_0)})}{r_k}\geq 2
            				\]
            				which is only possible if $\Gamma_3^{(r_k,\varphi_0)}$ converged locally to the radial geodesic $\Gamma'''$. These two facts determine the contradiction which completes the proof.	\\
            				
            				\indent
            				
            				The general case $d\geq 1$ follows along similar lines: 
            				\
            				 
            				\noindent  if we let $m^k_1,m^k_2,\ldots, m^k_{d}$ be the points of (transverse) interior intersection of  $\Gamma_1^{(r_k,\varphi_0)}$ and $\Gamma_3^{(r_k,\varphi_0)}$, as we move from $p^{(r_k,\varphi_0)}$ to $q^{(r_k,\varphi_0)}$, applying Gauss-Bonnet to the domain $D^k_i$ gives
            				\[
            				\int_{D^k_i} K_g =
            				\begin{cases}
            				2\pi - \nu^{ext}_{p^{(r_k,\varphi_0)}}-\nu^{ext}_{m^k_1} & \textrm{if} \ i=0 \\
            				2\pi -\nu^{ext}_{m^k_{i}}-\nu^{ext}_{m^k_{i+1}} & \textrm{if} \ i=1,\ldots, d-1 \\
            				2\pi -\nu^{ext}_{m^k_{d}}-\nu^{ext}_{q^{(r_k,\varphi_0)}} & \textrm{if} \ i=d.
            				\end{cases}
            				\] 
            				so that, adding these equation we get to
            				\[
            				\int_{D^k}K_g = 2\pi (d+1) -2\sum_{i=1}^{d}\nu^{ext}_{m^k_i}-\nu^{ext}_{p^{(r_k,\varphi_0)}}-\nu^{ext}_{q^{(r_k,\varphi_0)}}.
            				\]
            				Arguing as above, we know that the integral on the left-hand side must converge to zero as we let $k\to\infty$ and hence, once again \textsl{each} of the angles on the right-hand side must converge to $\pi$. In particular, this will force $\Gamma_3^{(r_k,\varphi_0)}$ to subconverge to $\Gamma'$ as we rescale by $r_k$, which violates the gap condition above.  
            				
            				\
            				
            				Such claim being proven, we need to gain local length estimates for the sequence of geodesics $\Gamma^{(r_k,\varphi_0)}_3$. 
            				
            				\
            				
            				\textbf{Key claim 2:} for every $v\in S$ there exists a metric ball $B_{\delta}(v)$ and a constant $C>0$ such that $\mathscr{H}^{1}(\Gamma^{(r_k,\varphi_0)}_3\cap B_{\delta}(v))\leq C$ for all $k\geq 1$.
            				
            				\
            				
            				If the claim were false, we would have concentration of length in some bounded region of $S$. In particular, we could find (without loss of generality):
            				\begin{enumerate}
            					\item{a geodesic segment $\Gamma$ and a tubular neighborhood thereof having the form $\Gamma\times(-\delta,\delta)$;}	
            					\item{a set of suitable coordinates $\left\{x\right\}$ on such tubular neighborhood, so that $\Gamma$ is defined by $-\eta<x_1<\eta, \ x_2=0$;}
            					\item{for a subsequence of large radii $r_k$ (at least) two smooth functions $f^{k}_1, f^{k}_2: (-\eta,\eta)\to(-\delta,\delta)$ such that $f^k_1<f^k_2$ and $\textrm{graph}(f^k_1)\cup \textrm{graph}(f^k_2)\subset \Gamma^{(r_k,\varphi_0)}_3$, furthermore both $f^k_1$ and $f^k_2$ converge smoothly to zero as we let $k\to\infty$.}	
            				\end{enumerate}	
            				In such case consider the compact region $\Omega_k$ bounded by a short geodesic segment connecting (almost orthogonally) $f^k_1(0)$ with $f^k_2(0)$ together with a segment of $\Gamma^{(r_k,\varphi_0)}_3$, see Figure \ref{fig:dito}. By the Gauss-Bonnet Theorem we then must have 
            				\[
            				\lim_{k\to\infty}\int_{\Omega_k}K_g=\pi\, .
            				\]
            				Next, since the rescalings of $\Gamma_3^{(r_k, \varphi_0)}$ by a factor $r_k$ are converging to $\Gamma'''$ (the convergence being smooth and uniform in the coordinate annulus of radii $1/3$ and $1$), we conclude that
            				$\Gamma_3^{(r_k, \varphi_0)}$ meets the circle $\{r=\frac{1}{2} r_k\}$ at two almost antipodal points at almost square (exterior) angles $\nu^{'}_k, \nu^{''}_k $, cf. again Figure \ref{fig:dito}. Denoted $\Sigma^{+}_k, \Sigma^{-}_k$ the two resulting connected components of $\{r = r_k/2\}$,  we easily conclude that
            				\[
            				\lim_{k\to\infty}\int_{\Sigma^{+}_k}\kappa_g=\lim_{k\to\infty}\int_{\Sigma^{-}_k}\kappa_g=\pi \sin \alpha
            				\]
            				as well as
            				\[
            				\lim_{k\to\infty}\nu^{'}_k=\lim_{k\to\infty}\nu^{''}_k=\frac{\pi}{2}.
            				\]
            				Thus, applying the Gauss-Bonnet theorem to the two regions $\Omega^{+}_{k}, \Omega^{-}_k$ bounded by $\Sigma^{+}$ (resp. $\Sigma^{-}_k$) together with $\Gamma^{(r_k,\varphi_0)}_3$ we get at once
            				\[
            				\lim_{k\to\infty}\int_{\Omega^{+}_k}K_g=\lim_{k\to\infty}\int_{\Omega^{-}_k}K_g=\pi (1-\sin \alpha).
            				\]
            				
            				\begin{figure}[htbp]
            					\begin{center}
            						\input{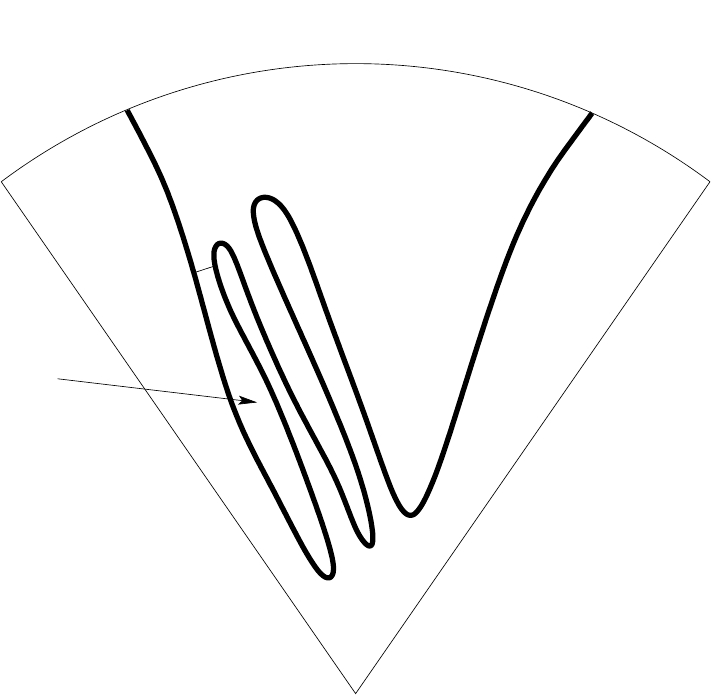_t}
            					\end{center}
            					\caption{The regions $\Omega_k^\pm$ and $\Omega_k$. The thick line represents the geodesic $\Gamma_3^{(r_k, \varphi_0)}$.}\label{fig:dito}
            				\end{figure}
            				
            				But on the other hand, the region $\Omega_k$ is contained in either $\Omega^{+}_k$ or $\Omega^{-}_k$, which leads to a contradiction because $K_g$ is non-negative. Thus we conclude that the local concentration of min-max geodesics cannot occur.

            				\
            				
            				Lastly, let us discuss the index of the properly embedded geodesic $\Gamma_{\infty}$. 
            				Now, it is a direct, straightforward check that in fact $\gamma^{(r_0,\varphi_0)}_3$ has also index less or equal than one as a critical point of the length functional $L(\cdot)$ (for if not we could reparametrize two $W^{1,2}$-orthogonal variations that decrease the length into variations by constant speed in which category $E=L^2$ and so those would be two  variations that decrease the energy, contradiction). Hence, $\Gamma^{(\varphi_0)}_{\infty}$ does also have Morse index less or equal than one due to locally graphical, smooth (geometric) convergence of $\Gamma^{(r_0,\varphi_0)}_3$ to $\Gamma^{(\varphi_0)}_{\infty}$ with multiplicity one. 
            				
            				Let us finally concern ourselves with the rigidity part of our theorem.
            				If $\gamma^{(\varphi_0)}_{\infty}: \mathbb{R}\to S$, an arclength parametrization for $\Gamma^{(\varphi_0)}_{\infty}$, had index zero then the stability inequality for geodesics
            				\[
            				\int_{-\infty}^{+\infty}|\dot{u}(t)|^2\,dt\geq \int_{-\infty}^{+\infty}K(\gamma_{\infty}(t))u^2(t)\,dt
            				\]
            				applied to a cutoff function
            				\[
            				u^{(\sigma)}(t)=
            				\begin{cases}
            				1 & \textrm{if} \ |t|\leq\sigma \\
            				0 & \textrm{if} \  |t|\geq 2\sigma
            				\end{cases}
            				\]
            				and satisfying $|\dot{u}^{(\sigma)}|\leq 2/\sigma$, implies
            				\[
            				\int_{-\sigma}^{+\sigma}K(\gamma^{(\varphi_0)}_{\infty}(t))\,dt\leq \frac{16}{\sigma}
            				\]
            				so that letting $\sigma\to+\infty$ we conclude that the Gauss curvature must vanish identically along $\Gamma^{(\varphi_0)}_{\infty}$.

            			\end{proof}

            			\section{Back to the question of Bangert}\label{sec:bangert}
            			
            			In this section, we shall complete the proof of Theorem \ref{thm:maingeod} by showing that indeed the map  $[\varphi_0]\to \Gamma^{(\varphi_0)}_{\infty}$ is injective (for $[\varphi]\in\mathbb{R}\mathbb{P}^1$ the equivalence class of $\varphi_0\in S^1$ modulo antipodality), so that we will in fact obtain \textsl{uncountably many} embedded geodesic lines. Such conclusion follows at once from a \textsl{non-twisting} statement we are about to present. To that aim we need a definition and a simple lemma.
            			
            			
            			\begin{defi}\label{def:ray}
            				Given a properly embedded line $\Gamma\subset S$, we call \textsl{ray} a closed, unbounded connected component of $\Gamma$. 
            			\end{defi}	
            			In particular, it follows that a ray can be parametrized by means of a map $\gamma: [0,+\infty)\to S$.	
            			
            			\begin{lem}\label{lem:asygraph}
            				Let $(S,g)$ be an asymptotically conical surface (in the sense of Definition \ref{def:acon}) and let $\gamma:[0,+\infty)\to S$ be a properly embedded geodesic ray. Then, one can find an identification of $S$ minus a suitable compact set with the outer portion of a Euclidean wedge of angle $2\pi\sin\alpha$ (with pointwise identification of the sides, see Remark \ref{ref:wedgecoor2}), a large constant $\rho_0>0$ and a defining function $f\in C^2([\rho_0,+\infty);\mathbb{R})$ whose Cartesian graph\footnote{The wedge $W$ in question has natural Euclidean coordinates $(x^1,x^2)$, where of course $x^1=\rho\cos\vartheta, x^2=\rho\sin\vartheta$, and so the Cartesian graph of a function $f:I\to\mathbb{R}$ is meant to be the set $\left\{(x^1,x^2)\in W, x^2=f(x^1) \ \textrm{for} \ x^1\in I\right\}$.}coincides with the image $\gamma[0,+\infty)\cap \left\{\rho\geq \rho_0\right\}$ and such that $|f(\rho)|+\rho|f'(\rho)|\leq C \rho^{1-\mu}$ (for $\mu>0$ the asymptotic decay rate of the surface in question). 	
            			\end{lem}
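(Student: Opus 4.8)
The plan is to pass to the wedge coordinates of Remark~\ref{ref:wedgecoor2} and to exploit that there the asymptotically conical metric is a decaying perturbation of the flat one, so that the geodesic equation for $\gamma$ perturbs that of a straight line. Write $\delta=d\rho^2+\rho^2\,d\vartheta^2$ for the flat metric on the wedge $W$ of angle $2\pi\sin\alpha$, pass to Euclidean Cartesian coordinates $(x^1,x^2)$ on $W$, and note that Definition~\ref{def:acon} together with Remark~\ref{rem:wedgecoor1} gives $g=\delta+h$ with $h=O_2(\rho^{-\mu})$ in these coordinates, hence Christoffel symbols $\Gamma^g=O(\rho^{-\mu-1})$. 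Since the decay hypothesis persists for any smaller exponent, I may assume $\mu\in(0,1)$. Parametrising $\gamma$ by $g$-arclength, the smallness of $h$ for $\rho$ large gives $|\dot\gamma|_\delta^2=1-h(\dot\gamma,\dot\gamma)=1+O(\rho^{-\mu})$ and $|\dot\rho|\le|\dot\gamma|_\delta\le2$; and since $\gamma$ is proper, $\rho(s):=|x(\gamma(s))|_\delta\to+\infty$.

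\emph{Linear escape and asymptotic direction.} First I would show $\rho(s)\asymp s$. From $\ddot x=-\Gamma^g(\dot\gamma,\dot\gamma)$ and $|\dot x|_\delta\le2$ one gets $|\ddot x|_\delta\le C\rho^{-\mu-1}$, hence
\[
\tfrac12(\rho^2)''=|\dot x|_\delta^2+\langle x,\ddot x\rangle_\delta\ \ge\ \tfrac12-C\rho^{-\mu}\ \ge\ \tfrac14\qquad\text{for }\rho\text{ large;}
\]
since this holds for all $s$ beyond some $s_1$ and $\rho^2\to+\infty$, the function $\rho^2$ is eventually convex with eventually positive, linearly growing derivative, whence $\rho(s)\ge cs$; the bound $|\dot\rho|\le2$ gives the reverse inequality. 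Next I introduce the \emph{approximate Clairaut integral} $J:=\langle\dot\gamma,\partial_\vartheta\rangle_\delta=\rho^2\dot\vartheta$, the counterpart for $g$ of the quantity whose exact conservation on $C_\alpha$ underlies Lemma~\ref{lem:clairaut}. As $\partial_\vartheta$ is $\delta$-Killing, $\dot J=\langle D^\delta_{\dot\gamma}\dot\gamma,\partial_\vartheta\rangle_\delta=-\langle\Gamma^g(\dot\gamma,\dot\gamma),\partial_\vartheta\rangle_\delta$, so $|\dot J|\le C\rho^{-\mu}\le Cs^{-\mu}$; integrating, $|J(s)|\le C(1+s^{1-\mu})\le C\rho(s)^{1-\mu}=o(\rho(s))$. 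Consequently $|\dot\vartheta|=|J|/\rho^2\le C\rho^{-1-\mu}$ is integrable in $s$, so $\vartheta(s)\to\vartheta_\infty$ for some limit $\vartheta_\infty$, with $|\vartheta(s)-\vartheta_\infty|\le C\rho(s)^{-\mu}$ (working on the universal cover in the variable $\vartheta$, so that this is unambiguous even if $\gamma$ should wind before escaping), and $\dot\rho^2=|\dot\gamma|_\delta^2-J^2/\rho^2\to1$.

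\emph{The graph and its decay.} I would then \emph{choose} the structure at infinity $(r,\varphi)$ --- equivalently the linear ray along which $W$ is cut --- so that $\vartheta_\infty$ is the direction of the positive $x^1$-axis, which lies in the interior of $W$. Then for $s$ large $\gamma$ stays in the interior of $W$, and $\dot x^1=\dot\rho\cos\vartheta-\rho\dot\vartheta\sin\vartheta\to1$ while $\dot x^2=\dot\rho\sin\vartheta+\rho\dot\vartheta\cos\vartheta=O(\rho^{-\mu})$, using $\rho\dot\vartheta=J/\rho=O(\rho^{-\mu})$ and $\sin\vartheta=O(\vartheta-\vartheta_\infty)=O(\rho^{-\mu})$. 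Hence for a suitably large $\rho_0$ the map $s\mapsto x^1(\gamma(s))$ is a smooth increasing diffeomorphism onto $[\rho_0,+\infty)$, and $\gamma[0,+\infty)\cap\{\rho\ge\rho_0\}$ is the Cartesian graph of $f(x^1):=x^2$, which is $C^\infty$ (in particular $C^2$) because $\gamma$ is a smooth geodesic. Finally, $f'(x^1)=\dot x^2/\dot x^1=O(\rho^{-\mu})$ yields $\rho|f'|\le C\rho^{1-\mu}$, while at $x^1=\rho_0$ one has $|f(\rho_0)|=\rho|\sin\vartheta|\le\rho|\vartheta-\vartheta_\infty|\le C\rho_0^{1-\mu}$, so
\[
|f(x^1)|\le|f(\rho_0)|+\int_{\rho_0}^{x^1}|f'(\tau)|\,d\tau\le C\rho_0^{1-\mu}+C\int_{\rho_0}^{x^1}\tau^{-\mu}\,d\tau\le C(x^1)^{1-\mu},
\]
where the positivity of $1-\mu$ is used; since $x^1$ and $\rho$ are comparable on the graph, this is the asserted estimate.

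\emph{Main difficulty.} The delicate point is that the Clairaut integral $J$ is only \emph{approximately} conserved and a priori may grow like $s^{1-\mu}$; the argument closes only because one first establishes, independently, the linear escape $\rho(s)\sim s$, which makes that growth negligible relative to $\rho$ and forces $\vartheta$ to converge. A subsidiary point is that $\gamma$ need not be asymptotic to a line through the cone's apex --- its \emph{impact parameter} may be nonzero --- but this is harmless precisely because the exponent $1-\mu$ in the target estimate is positive, which is where the reduction to $\mu\in(0,1)$ enters; one must also choose the coordinates at infinity adapted to $\gamma$, so that its asymptotic direction lies in the interior of the wedge. (Embeddedness of $\gamma$ is, beyond guaranteeing properness, not otherwise needed here.)
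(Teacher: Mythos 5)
Your argument is correct, and it fills in a proof that the paper itself only sketches in two sentences (the authors observe that $\kappa_g=0$ forces the Euclidean geodesic curvature of the support to satisfy $\kappa_\delta=O(\rho^{-1-\mu})$, invoke integrability of $\rho^{-1-\mu}$ to get uniqueness of the tangent cone at infinity and the indefinite extension of a local graphical description, and refer to \cite{Car14} for the analogous minimal-surface argument). Your implementation is the same idea in substance but runs through a first integral rather than through the curvature of the support: you first establish the linear escape $\rho(s)\asymp s$ via the convexity of $\rho^2$ (a step the paper's sketch silently presupposes, and which is genuinely needed to convert $\rho$-decay into arclength-decay), and then control the approximate Clairaut quantity $J=\rho^2\dot\vartheta$, whose near-conservation plays exactly the role of the smallness of $\kappa_\delta$. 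All the individual estimates check out: the Christoffel comparison $\Gamma^g-\Gamma^\delta=O(\rho^{-1-\mu})$ in Cartesian wedge coordinates follows from Remark \ref{rem:wedgecoor1}, the bound $|\dot J|\le C\rho^{-\mu}$ integrates to $J=o(\rho)$ once $\rho\gtrsim s$ is known, and the passage to the graph with $|f|+\rho|f'|\le C\rho^{1-\mu}$ is a routine integration. Your closing observation is also a genuine (if minor) correction to the statement: because a geodesic ray may have nonzero impact parameter relative to the apex, the estimate can only be expected with a positive exponent $1-\mu$, so the reduction to $\mu\in(0,1)$ is not a loss of generality but a necessary reading of the lemma (and it is all that is used downstream, e.g.\ in Proposition \ref{prop:multiplicity}, where only the $o(\rho)$ bound enters).
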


            			This assertion can be proved by observing that the geometric assumption of vanishing geodesic curvature, namely $\kappa_g=0$, implies $\kappa_{\delta}=O(\rho^{-1-\mu})$ and hence noticing that such decay rate (by integrability of $\rho\mapsto \rho^{-1-\mu}$ when $\mu>0$) ensures uniqueness of the tangent cone at infinity of $\textrm{spt}(\gamma)$ and hence the indefinite extension of a local graphical description of such support, with the claimed expansion. This is a (simpler) variation of well-known arguments for complete minimal surfaces, cf. \cite{Car14}, so we omit the details. \\
            			
            			\indent In case the conclusion of Lemma \ref{lem:asygraph} holds we shall say that the curve $\gamma$ is asymptotic to the coordinate half-line $\vartheta=0$ (obviously, this is to be understood in a suitably weak sense if $0<\mu\leq 1$). Now, the claimed non-twisting phenomenon can be phrased as follows:
            			
            			\begin{prop}\label{prop:multiplicity}
            				Let $(S,g)$ be an asymptotically conical surface (in the sense of Definition \ref{def:acon}) and let $\gamma:[0,+\infty)\to S$ be a properly embedded geodesic ray, asymptotic to the half-line $\vartheta=0$. For any fixed $0<\vartheta<\pi\sin\alpha$ one cannot find two diverging sequences of radii $\left\{\rho_k\right\}$ and $\left\{\rho'_k\right\}$ with $\rho'_k<\rho_k$ for all $k\geq 1$ and a sequence of solutions $\left\{\Gamma_k\right\}$ to the min-max Plateau problems with endpoints $p_k, q_k$ with $\rho(p_k)=\rho(q_k)=\rho_k, \vartheta(q_k)=\vartheta, \vartheta(p_k)=\vartheta-\pi\sin(\alpha)$
            				such that $\Gamma_k\supset \textrm{graph}(f_k)$ with $\sup_{\rho'_k/2\leq\rho\leq 2\rho'_k}\left(|f_k(\rho)|+\rho'_k|f'_k(\rho)|\right)<\rho'_k k^{-1}$.
            			\end{prop}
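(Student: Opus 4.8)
The plan is to argue by contradiction. Assume such diverging sequences $\{\rho_k\}$, $\{\rho'_k\}$ (with $\rho'_k<\rho_k$) and min-max segments $\{\Gamma_k\}$ do exist; I will reach a contradiction with the fact that, at radius $\rho_k$, the curve $\Gamma_k$ has its endpoints pinned at the fixed angular positions $\vartheta$ and $\vartheta-\pi\sin\alpha$, while at the much smaller radius $\rho'_k$ it is essentially tangent to the meridian $\{\vartheta=0\}$. The underlying principle is that a geodesic entering the almost-flat region $\{\rho\gtrsim\rho'_k\}$ — where the total curvature is only $\lesssim(\rho'_k)^{-\mu}$ — tangent to a radial line must stay near that radial line, and so cannot reach, at radius $\rho_k$, an angular position bounded away from $0$.

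First I would isolate the relevant arc of $\Gamma_k$ (recall from Propositions \ref{prop:finscale} and \ref{pro:minmaxgeod} that each $\Gamma_k$ is embedded). Fix $R_0$ so large that on $\{\rho\ge R_0\}$ the decay $|K_g|\le C\rho^{-2-\mu}$ forces every coordinate circle to be strictly convex towards the vertex; then every interior critical point of $\rho|_{\Gamma_k}$ lying in $\{\rho\ge R_0\}$ is a strict local minimum, so $\rho|_{\Gamma_k}$ has no interior local maximum there. Since $\Gamma_k$ joins the two radius-$\rho_k$ points $p_k$ and $q_k$, an elementary argument (no interior maximum, both endpoints at radius $\rho_k$) then shows that $\Gamma_k\cap\{\rho\ge R_0\}$ is either a single arc on which $\rho$ is unimodal, or the union of two arcs on each of which $\rho$ is monotone; in either case $\Gamma_k$ decomposes into two $\rho$-monotone ``branches'' reaching radius $\rho_k$ at $p_k$ and at $q_k$ respectively, and every slice $\{\rho=\rho_\ast\}$ with $\rho_\ast\in(R_0,\rho_k)$ meets $\Gamma_k$ in exactly two points. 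The hypothesis $\Gamma_k\supset\mathrm{graph}(f_k)$, together with $|f_k|,|f'_k|$ being small, forces the polar radius to be strictly monotone along $\mathrm{graph}(f_k)$ and its range — essentially $[\rho'_k/2,2\rho'_k]$ — to lie in $\{\rho\ge R_0\}$ for $k$ large; hence $\mathrm{graph}(f_k)$ is contained in one of the two branches. Let $b_k$ be the portion of that branch over the radial interval $[\rho'_k,\rho_k]$: a $\rho$-monotone geodesic arc inside $\{\rho\ge\rho'_k\}$ which near $\rho=\rho'_k$ coincides with $\mathrm{graph}(f_k)$ — so there its angular coordinate and its slope relative to $\partial_\rho$ are both $<C/k$ — and which at $\rho=\rho_k$ terminates at $p_k$ or at $q_k$.

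The heart of the proof is showing that $b_k$ cannot move appreciably in the angular variable. Parametrizing $b_k$ by $\rho$ and lifting its angular coordinate to a continuous $\vartheta_k:[\rho'_k,\rho_k]\to\mathbb{R}$, one has $|\vartheta_k(\rho'_k)|<C/k$ and $|\vartheta_k'(\rho'_k)|<C/(k\rho'_k)$. Let $\overline b_k$ be the exact cone geodesic with the same position and direction as $b_k$ at $\rho=\rho'_k$, and $\overline\vartheta_k$ its lifted angle; since the angular velocity of a cone geodesic falls off like $\rho^{-2}$, the total angular increment of $\overline b_k$ over $[\rho'_k,\rho_k]$ is at most $C\rho'_k|\vartheta_k'(\rho'_k)|\le C/k$. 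The deviation $v=\vartheta_k-\overline\vartheta_k$ solves the Jacobi equation along $\overline b_k$ with an inhomogeneous term produced by the metric error $e=O_2(\rho^{-\mu})$; in the parallel frame along the cone meridian the homogeneous solutions have angular component $a+b\rho^{-1}$, the source is $O(\rho^{-1-\mu})$ (a bookkeeping exercise using $|K_g|\lesssim\rho^{-2-\mu}$ and $e=O_2(\rho^{-\mu})$), and — since $v$ and $v'$ vanish at $\rho'_k$ — variation of parameters yields $|v(\rho)|\le C(\rho'_k)^{-\mu}$ throughout. Hence $|\vartheta_k(\rho_k)|\le|\vartheta_k(\rho'_k)|+C/k+C(\rho'_k)^{-\mu}\to 0$. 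But $\vartheta_k(\rho_k)$ is a lift of the angular coordinate of an endpoint of $\Gamma_k$, hence equals $\vartheta+2\pi m\sin\alpha$ or $(\vartheta-\pi\sin\alpha)+2\pi m\sin\alpha$ for some $m\in\mathbb{Z}$; since $\vartheta\in(0,\pi\sin\alpha)$, each such number lies at distance at least $\delta_0:=\min\{\vartheta,\pi\sin\alpha-\vartheta\}>0$ from the set $\{2\pi\ell\sin\alpha:\ell\in\mathbb{Z}\}$, so $|\vartheta_k(\rho_k)|\ge\delta_0$ for all $k$ — a contradiction. (Equivalently, one may run Gauss--Bonnet on the region trapped between $b_k$, the ray $\gamma$, and short arcs of the circles $\{\rho=\rho'_k\}$ and $\{\rho=\rho_k\}$: its curvature integral is $O((\rho'_k)^{-\mu})$, which forces the total turning along the boundary to be within $o(1)$ of $2\pi$ and hence the angular width, which is at least $\delta_0$, to be infinitesimal.) This excludes the twisting phenomenon, which is the content of Proposition \ref{prop:multiplicity}; as recalled at the beginning of Section \ref{sec:bangert} it entails that $[\varphi_0]\mapsto\Gamma^{(\varphi_0)}_\infty$ is injective.

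The step I expect to be genuinely delicate is this near-straightness estimate, and in particular ruling out that $b_k$ oscillates or winds in the angular variable over the enormous radial range $[\rho'_k,\rho_k]$: handling it requires combining the $\rho$-monotonicity of $b_k$, the embeddedness of $\Gamma_k$, and the integrated smallness of both $K_g$ and of the metric error $e$ — which decays only at the possibly slow rate $\mu>0$ — and it is exactly here that the factor $k^{-1}$ in the hypothesis on $f_k$ does essential work, absorbing the cone contribution no matter how fast $\rho_k$ grows compared with $\rho'_k$.
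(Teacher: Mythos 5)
Your argument is correct in outline but follows a genuinely different route from the paper's. The first step is the same: both proofs use the convexity of large coordinate circles to make $\rho$ monotone along the outgoing branch of $\Gamma_k$ issuing from $\mathrm{graph}(f_k)$. From there the paper applies Gauss--Bonnet to the (possibly multiply-connected) region bounded by that branch, the portion of the ray $\gamma$ in $\{\rho'_k\le\rho\le\rho_k\}$, and two circle arcs; it handles the possible transverse crossings of the branch with $\gamma$ by an induction over the connected components (showing each interior exterior angle tends to $\pi$), and it needs the blow-down characterization from Proposition \ref{pro:minmaxgeod} to pin the corner angle at $q_k$ to $\pi/2+o(1)$, arriving at $\vartheta(1+o(1))=o(1)$. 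Your primary argument instead integrates the geodesic ODE for $\vartheta(\rho)$ outward from $\rho=\rho'_k$, comparing with the exact cone geodesic with the same initial data: the Clairaut contribution is $O(k^{-1})$ and the metric-error source, being $O(\rho^{-2-\mu})$ against the homogeneous solutions $a+b\rho^{-1}$ of $(\rho^2 v')'=0$, contributes $O((\rho'_k)^{-\mu})$, so the endpoint angle is $o(1)$ rather than $\ge\delta_0$. This buys something real: you never form a region, so you need neither the embeddedness/crossing bookkeeping nor the blow-down control of the angle of incidence at $q_k$, and the ray $\gamma$ enters only through the axis $\{\vartheta=0\}$. The price is that the variation-of-parameters step is a linearization around the radial regime over the unboundedly long interval $[\rho'_k,\rho_k]$, and as written it presupposes exactly the smallness of $\rho|\vartheta_k'|$ it is meant to prove; you flag this as the delicate point but do not close it. It does close, by a standard continuity argument (on the maximal interval where $|\vartheta_k|+\rho|\vartheta_k'|\le\varepsilon_0$ the nonlinear terms such as $\rho(\vartheta_k')^3$ are dominated and the estimate returns the strictly better bound $C(k^{-1}+(\rho'_k)^{-\mu})$), so this should be spelled out rather than left implicit. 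Finally, note that your parenthetical Gauss--Bonnet alternative is essentially the paper's proof, but it silently assumes the branch and the ray bound a single annular-type region; the paper's component-by-component induction is precisely what handles the case where they intersect.
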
	
            			
            			\begin{proof}
            				Let us argue by contradiction, assuming the existences of scales and min-max geodesics as in the statement above. For each $k\geq 1$ consider an arc-length parametrization $\gamma_k:[s^{-}_k,s^{+}_k]$ of $\Gamma_k$ such that $m_k:=\gamma_k(0)\in\textrm{graph}(f_k)\cap \left\{\rho=\rho'_k\right\}$ and $\frac{d\rho(\gamma_k)}{ds}_{s=0}>0$. Basic Morse-theory (which amounts, in the special case of curves, to a direct curvature comparison using large coordinate circles) ensures that in fact
            				\[
            				\frac{d\rho(\gamma_k)}{ds}\geq 0 \ \forall \ s\in [0,s^{+}_{k}]
            				\]
            				which means that the radius function (when restricted to $\Gamma_k$) is monotone non-decreasing from $m_k$ to $q_k$. Set $\Gamma^{\omega}_k = \gamma_k[0,s^{+}_k]$ and $\Gamma^{\lambda}_k=\Gamma \cap\left\{\rho'_k\leq\rho\leq\rho_k\right\}$. Notice that (possibly neglecting finitely many terms in the sequence and renaming indices) we can assume that $\Gamma^{\lambda}_k$ is the (Cartesian) graph of a defining function $f$ restricted to $[\rho_k',\rho_k]$ and satisfying the bounds described in Lemma \ref{lem:asygraph}.

            				Let us then consider the (possibly multiply-connected) domain $D^k$ whose piecewise smooth boundary consists of of $\Gamma^{\omega}_k, \Gamma^{\lambda}_k$ and the arcs of coordinate circles at radii $\rho'_k$ and $\rho_k$, which we shall call $\Delta'_k$ and $\Delta_k$, respectively. 	 
            				Set $\dot{D^k}=\sqcup_{i=0}^d\dot{D^k}_i$ where $m_k\in D^k_0$ and $q_k\in D^k_d$ (as shown in Figure \ref{fig:domains}).
            				
            				\begin{figure}[htbp]
            					\includegraphics{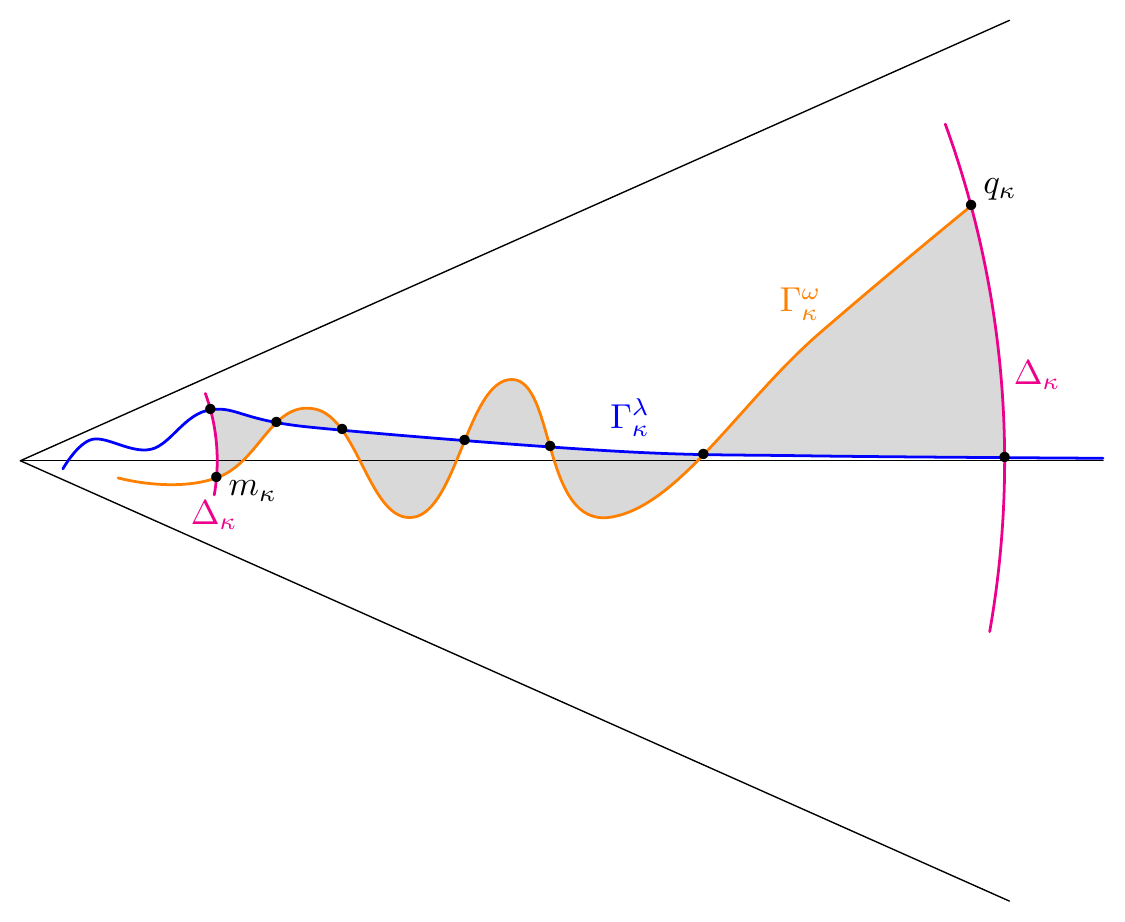}
            					\caption{The twisting phenomenon: we rule out the existence of min-max geodesic segments that behave like the orange line (by courtesy of Mario B. Schulz).}
            					\label{fig:domains}
            				\end{figure}

            				At this stage, let us simply apply the Gauss-Bonnet theorem to each domain $D^k_i$ for $i=0,1,\ldots, d$. Let $\nu^k_1,\ldots,\nu^k_d$ the exterior angles at the intersection points of $\Gamma^{\omega}_k$ and $\Gamma^{\lambda}_k$ (if there is no such intersection point the proof is identical and in fact simpler). One has that
            				\[
            				\int_{D^k_i} K_g = o(1) \ \ \forall \ i=0,1,\ldots,d \ \textrm{as one lets} \ k\to\infty
            				\]
            				by virtue of the integrability of the Gauss-curvature function $K_g$ (which, in turn, is implied by the bound $|K_g|\leq C\rho^{-2-\mu}$).
            				Furthermore, for what concerns the integral of the geodesic curvature along the boundary
            				\begin{equation*}
            				\int_{\partial D^k_i} \kappa_g
            				\begin{cases}
            				\leq C((\rho'_k)^{-\mu}+k^{-1}) & \textrm{if} \ i=0  \\
            				= 0 & \textrm{if} \  0<i<d \\
            				= \vartheta(1+o(1)) & \textrm{if} \ i=d. \\
            				\end{cases}	
            				\end{equation*} Lastly, the exterior angles at the four intersection points $\Gamma^{\lambda}_k\cap \Delta'_k, \Gamma^{\omega}_k\cap \Delta'_k, \Gamma^{\lambda}_k\cap \Delta_k, \Gamma^{\omega}_k\cap \Delta_k$ are all $\pi/2+o(1)$ as we let $k\to\infty$. (Notice that, in the case of the angle at $\Gamma^{\omega}_k\cap \Delta_k$ this is a consequence of the blow-down characterization of the min-max segments we construct, which in turn is directly implied by Proposition \ref{pro:minmaxgeod}. Thus, possibly at the cost of extracting a subsequence we can always ensure that this angle converges to $\pi/2$ as well). As a result, proceeding inductively for $i=0,1,\ldots, d-1$ the Gauss-Bonnet theorem provides $\nu^k_i\to \pi$ for $i=1,2,\ldots, d$ as $k\to 0$ and hence for $i=d$
            				\[
            				2\pi= \int_{D^k_d}K_g+\int_{\partial D^k_d}\kappa_g+\textrm{exterior angles}= o(1)+2\pi+\int_{\Delta_k}\kappa_g
            				\]
            				so that one should conclude $\vartheta (1+o(1))= \int_{\Delta_k}\kappa_g=o(1)$ which gives the desired contradiction as soon as one lets $k\to\infty$.
            			\end{proof}


            			\appendix
            			
            			\section{Geodesics, 1-currents and convergence results}\label{sec:geodcurr}
            			
            			\subsection{Geodesics}\label{subs:geod}
            			Let $(N,g)$ be a complete, Riemannian manifold of dimension greater or equal than two. We will say that a $C^2$-curve $\gamma:I\to N$ is a \ul{parametrized geodesic} if $D_{\dot{\gamma}}\dot{\gamma}=0$ where $D$ denotes the Levi-Civita connection on $(N,g)$, the apex $\dot{}$ denotes ordinary differentiation with respect to the parameter and $I\subset \mathbb{R}$ is an interval. If $I=[a,b]$, a compact interval, it is well-known that  $\gamma$ (as above) is a geodesic if and only if it is a critical point of the length functional
            			\[
            			L(\gamma)=\int_I \sqrt{g(\dot{\gamma}(t),\dot{\gamma}(t))}\,dt.
            			\]
            			The same characterization also holds true in general (hence, for instance, when $I=\mathbb{R}$) for variations that are supported on relatively compact subdomains of $I$.
            			A posteriori, a geodesic is in fact a smooth curve, namely $\gamma\in C^{\infty}(I,N)$.\\ 
            			\indent It is often convenient to work with the energy functional
            			\[
            			E(\gamma)=\int_I g(\dot{\gamma}(t),\dot{\gamma}(t))\,dt
            			\]
            			for which the Cauchy-Schwarz inequality gives $L^2\leq E|I|$. In particular, if $I=[0,1]$ and $\gamma$ is parametrized by a constant multiple of the arc-length then $L^2=E$. Hence it is easily seen that a critical point of $E$ is also a critical point of $L$ and, viceversa, a critical point of $L$ can be re-parametrized so to become a critical point of $E$. \\
            			\indent If we set $\Gamma:=\gamma(I)$ then one can canonically associate to $\Gamma$ an integral 1-current $T$ (with unit multiplicity and orientation induced by the parametrization itself) and of course
            			$\textrm{spt}(T)=\Gamma$ while $\textrm{spt}(\partial T)$ consist of the endpoints of $\Gamma$.
            			Notice that
            			(assuming, say, compactness of $I$) one has $L(\gamma)=\mathscr{H}^{1}(\Gamma)$. If $\gamma:I\to N$ is a parametrized geodesic, we shall say (with slight abuse of terminology) that $\Gamma$ is a geodesic (rather than the \ul{support} of a geodesic). This choice, which we adopt for the sake of brevity, is justified by the basic fact that for every diffeomorphism $\lambda: I_1\to I$ one has that $\gamma: I\to N$ is a geodesic if and only if $\gamma\cdot \lambda: I_1\to N$ is.
            			
            			\subsection{Convergence}\label{subs:convergence}
            			
            			Geodesics could also, obviously, be regarded as the one-dimensional, degenerate counterpart of minimal surfaces and this analogy suggests the effectiveness of dealing with convergence of supports, rather than parametrizations. Notice that, in fact, geodesics are the one-dimensional counterpart of totally geodesic surfaces so that the corresponding curvature estimates come (tautologically) for free. 
            			
            			\begin{defi}\label{conv}
            				Let $(N,g)$ be a Riemannian manifold of dimension two and let $\left\{\Gamma_k\right\}_{k\geq 1}$ be a sequence of smooth, connected 1-dimensional submanifolds (possibly with boundary). We shall say that such sequence converges \ul{geometrically} with multiplicity $m\geq 1$ if there exists a smooth 1-dimensional submanifold $\Gamma$ such that:
            				\begin{itemize}
            					\item{for every point $p$ of $\Gamma\setminus \partial \Gamma$ one can find an open tubular neighborhood $U$ and local coordinates $\left\{x\right\}$ such that $\Gamma\cap U$ is described by the equation $x_{2}=0$ and for $k\geq k_0$ the support $\Gamma_k$, when restricted to $U$, consists of exactly $m$ smooth graphs, namely if $U=(-\delta_1,\delta_1)\times(-\delta_2,\delta_2)$ there exist  $f_i\in C^{\infty}((-\delta_1,\delta_1),\mathbb{R})$ with $f_1<f_2<\ldots<f_m$ so that
            						\[
            						\Gamma_k = \left\{(x_1, x_2) \in(-\delta_1,\delta_1)\times(-\delta_2,\delta_2) \ : \ x_2= f_i(x_1), \ i=1,2,\ldots, m \right\}
            						\]
            						and each function $f_i$
            						converges to zero in $C^{\infty}$ as we let $k\to\infty$;}	
            					\item{if $\partial \Gamma$ is not empty, then $\partial \Gamma_k=\partial\Gamma$ (at least for $k\geq k_0$) and the above condition holds with $m=1$ both for interior and, with straightforward modifications, for boundary points.}	
            				\end{itemize}	 	
            			\end{defi}	
            			
            			We mention here two simple compactness results that are frequently used in this paper.
            			
            			\begin{lem}\label{lem:convmult}
            				Let $(N,g)$ be a Riemannian manifold of dimension two and let $\left\{\Gamma_k\right\}_{k\geq 1}$ be a sequence of smooth, simple geodesics with locally uniform length bounds, namely assume that for every $p\in N$ there exists a bounded regular neighborhood $U$ such that 
            				$\mathscr{H}^1(\Gamma_k\cap  U)\leq C$ for a constant $C>0$ that is independent of $k$.
            				If $\left\{\Gamma_k\right\}$ does not escape from every bounded domain of $N$, then there exists a smooth geodesic $\Gamma$ such that, possibly extracting a subsequence (which we will not rename),  $\left\{\Gamma_k\right\}$ converges geometrically to $\Gamma$. 
            			\end{lem}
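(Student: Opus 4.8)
The plan is to run the classical Arzel\`a--Ascoli argument for geodesics, localised on an exhaustion of $N$. Using the hypothesis that $\{\Gamma_k\}$ does not escape every bounded domain, I would first fix a bounded domain $D_0$ and, after passing to a subsequence, pick points $x_k\in\Gamma_k\cap D_0$; enlarging $D_0$ slightly we may assume $\overline{D_0}$ lies inside a bounded regular neighbourhood on which the length bound of the statement holds. Next I would exhaust $N$ by relatively compact open sets $\Omega_1\subset\subset\Omega_2\subset\subset\cdots$ with $D_0\subset\Omega_1$, chosen so that for each $j$ the closure $\overline{\Omega_j}$ sits inside a bounded regular neighbourhood where the length bound holds with some constant $C_j$, and so that $\overline{\Omega_j}$ is covered by finitely many coordinate charts in which the Christoffel symbols of $g$ are bounded.

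The local compactness rests on two elementary observations made on each $\Omega_j$. First, parametrising the connected arcs of $\Gamma_k\cap\Omega_j$ by arc length, the number of such arcs that actually meet the smaller set $\Omega_{j-1}$ is bounded: each of them has length at least $2\,\mathrm{dist}(\partial\Omega_{j-1},\partial\Omega_j)=:\delta_j>0$ inside $\Omega_j$, so the bound $\mathscr{H}^1(\Gamma_k\cap\Omega_j)\le C_j$ forces there to be at most $C_j/\delta_j$ of them. Second, each such arc $\gamma$, written in one of our coordinate charts, solves $\ddot\gamma^a+\Gamma^a_{bc}(\gamma)\dot\gamma^b\dot\gamma^c=0$ with $g(\dot\gamma,\dot\gamma)\equiv 1$; since the Christoffel symbols are bounded on $\overline{\Omega_j}$, every derivative of $\gamma$ is bounded, uniformly in $k$, on the portion of the parameter interval whose image lies in $\Omega_{j-1}$. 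Arzel\`a--Ascoli then yields a subsequence along which the (boundedly many) arcs threading $\Omega_{j-1}$ converge in $C^\infty_{\mathrm{loc}}$, and a diagonal extraction over $j$ produces a single subsequence along which $\Gamma_k$ converges, locally smoothly, to a locally finite collection of limit arcs; by continuous dependence of solutions of the geodesic equation on their data, each limit arc is itself a geodesic, and their union $\Gamma$ is a closed subset of $N$.

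It then remains to check that $\Gamma$ is an embedded smooth $1$-submanifold and that the convergence is geometric in the sense of Definition \ref{conv}. Two distinct limit arcs cannot cross: if they were tangent at a point they would coincide by ODE uniqueness, while if they met transversely at some $p$ then for $k$ large two nearby strands of $\Gamma_k$ would be forced to intersect near $p$, contradicting that each $\Gamma_k$ is simple. The same argument rules out transverse self-crossings of $\Gamma$, so $\Gamma$ is embedded. Fixing an interior point $p\in\Gamma$ and a thin coordinate tube around it in which $\Gamma$ is the slice $x_2=0$, the discussion above shows that for $k$ large $\Gamma_k$ is, inside the tube, a disjoint union of smooth graphs over $\Gamma$, each converging smoothly to $\Gamma$; passing to a further subsequence the number $m$ of these graphs stabilises, and since the multiplicity so defined is plainly locally constant it is constant on each component of $\Gamma\setminus\partial\Gamma$. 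Thus $\Gamma_k\to\Gamma$ geometrically with some multiplicity $m\ge 1$ (in the applications $m=1$ by Lemma \ref{lem:convint}); moreover the anchor points $x_k$ have a subsequential limit $x_\infty\in\overline{D_0}$ lying on $\Gamma$, so $\Gamma\ne\emptyset$.

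I expect the step that demands attention — rather than the soft compactness — to be the bookkeeping that turns the $C^\infty_{\mathrm{loc}}$ subconvergence of arcs into the precise graphical picture of Definition \ref{conv} with a globally defined multiplicity, together with the identification of $\Gamma$ with (the support of) a single geodesic. For the latter, one uses that overlapping limit arcs coincide by ODE uniqueness, so the component of $\Gamma$ through $x_\infty=\lim x_k$ is exactly the maximal geodesic determined by $x_\infty$ and the limiting unit tangent of the $\Gamma_k$ there; this component is the geodesic asserted by the lemma, and should $\Gamma$ a priori decompose into several geodesic components the same analysis applies to each (in the applications the additional Gauss--Bonnet and blow-down information of Section \ref{sec:proof} already singles out one properly embedded limit line).
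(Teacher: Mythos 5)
The paper states Lemma \ref{lem:convmult} without proof (it appears in Appendix \ref{sec:geodcurr} as one of two ``simple compactness results''), and your argument is exactly the standard localisation--Arzel\`a--Ascoli--ODE proof that the authors are taking for granted: anchor a subsequence in a bounded domain, use the local length bound to cap the number of strands per chart, use the geodesic equation with bounded Christoffel symbols for uniform $C^\infty$ bounds, diagonalise, and use simplicity plus ODE uniqueness to rule out transverse crossings and to obtain the graphical picture of Definition \ref{conv} with locally (hence, on a connected component, globally) constant multiplicity; this is correct. The only points you gloss over are harmless: a connected arc of $\Gamma_k\cap\Omega_j$ containing an endpoint of $\Gamma_k$ may be shorter than $2\delta_j$, which adds at most two to your count, and the degenerate scenario in which the $\Gamma_k$ shrink to a point (so that no nontrivial limit arc survives) is implicitly excluded by the statement and, in every application in the paper, by the definite lower bounds on the lengths of the segments involved.
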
	
            			
            			In presence of a non-empty boundary, one can gain sub-convergence with unit multiplicity.

            			\begin{lem}\label{lem:convint}
            				Let $(N,g)$ be a Riemannian manifold of dimension two and let $\left\{\Gamma_k\right\}_{k\geq 1}$ be a sequence of smooth geodesic segments all sharing one endpoint and with uniformly bounded length.
            				Then there exists a smooth geodesic $\Gamma$ such that, possibly extracting a subsequence (which we will not rename),  $\left\{\Gamma_k\right\}$ converges geometrically to a geodesic segment $\Gamma$ with multiplicity one. 
            			\end{lem}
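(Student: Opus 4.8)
The plan is to reduce the statement to the standard compactness theorem for solutions of the geodesic ODE; the only substantive point is that the common endpoint pins down the parametrization of the limit, and this is precisely what forces the multiplicity to be exactly one (in contrast with Lemma \ref{lem:convmult}, where no such control is available). First I would let $p_0$ denote the common endpoint and parametrize each $\Gamma_k$ by arc length from $p_0$: let $\gamma_k\colon[0,\ell_k]\to N$ be the geodesic with $\gamma_k(0)=p_0$ and $g(\dot\gamma_k,\dot\gamma_k)\equiv 1$, so that $\ell_k=\mathscr{H}^1(\Gamma_k)\le L$ by the uniform length bound (discarding the degenerate possibility $\ell_k\to 0$, in which case there is no $1$-submanifold limit; in every application the lengths are bounded below). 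The initial velocities $\dot\gamma_k(0)$ lie in the unit sphere of $T_{p_0}N$, which is compact, so after passing to a subsequence (not relabelled) we have $\dot\gamma_k(0)\to v$ with $g(v,v)=1$ and $\ell_k\to\ell>0$.

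Next I would invoke smooth dependence of solutions of the (autonomous, second order) geodesic equation $D_{\dot\gamma}\dot\gamma=0$ on their initial data. Since the $\gamma_k$ all solve this equation with initial conditions $(p_0,\dot\gamma_k(0))\to(p_0,v)$, they converge in $C^\infty$ on compact subintervals to the geodesic $\gamma$ determined by $\gamma(0)=p_0$, $\dot\gamma(0)=v$. That $\gamma$ is in fact defined on the whole of $[0,\ell]$ follows from the bound $d(p_0,\gamma(t))\le t\le\ell$ (obtained by passing to the limit in $d(p_0,\gamma_k(t))\le t$) together with completeness of $(N,g)$; in all the situations in which the lemma is used the segments anyway stay in a fixed compact set, so there is no issue here. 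Thus $\gamma\colon[0,\ell]\to N$ is a smooth geodesic segment, $\dot\gamma$ never vanishes, and $\gamma_k\to\gamma$, $\ell_k\to\ell$ together with all derivatives.

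Finally I would translate this into geometric convergence of the supports in the sense of Definition \ref{conv}. Fixing an interior parameter $t_0\in(0,\ell)$ and setting $q=\gamma(t_0)$, the nonvanishing of $\dot\gamma(t_0)$ lets me choose Fermi-type coordinates $\{x\}$ near $q$ with $\Gamma:=\gamma([0,\ell])$ given by $x_2=0$; the $C^1$ convergence $\gamma_k\to\gamma$ then shows that, on a small neighbourhood $U$ of $q$, the portion of $\gamma_k$ with parameter near $t_0$ is a single smooth graph over $\Gamma\cap U$ converging to $0$, i.e. multiplicity one. An entirely analogous argument with one-sided graphs handles the endpoints, $p_0$ being literally fixed and $\gamma_k(\ell_k)\to\gamma(\ell)$. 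I expect the only delicate point to be here: the limit geodesic $\gamma$ may fail to be embedded, so that $\Gamma$ is a genuine $1$-submanifold only away from the finitely many self-intersection points of $\gamma$ (isolated and transverse, by ODE uniqueness), near which $\Gamma_k$ consists of several transversally crossing arcs rather than parallel sheets. The clean conclusion is therefore that geometric convergence with multiplicity one holds off a finite set, and that each crossing branch is still approached with multiplicity one; in the uses made of this lemma the relevant limit geodesic is embedded and no caveat is needed.
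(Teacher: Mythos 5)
The paper offers no proof of this lemma: it is stated in the appendix as one of ``two simple compactness results,'' so there is nothing to compare against except the intended standard argument, which is exactly what you give. Your proof --- unit-speed parametrization from the common endpoint $p_0$, compactness of the unit sphere in $T_{p_0}N$ to extract a convergent subsequence of initial velocities and lengths, smooth dependence of the geodesic flow on initial data, and then local graphicality in Fermi coordinates to translate parametrized $C^\infty$ convergence into geometric convergence with a single sheet --- is correct and is precisely why the shared endpoint forces multiplicity one, in contrast with Lemma~\ref{lem:convmult}. The caveats you flag (lengths bounded away from zero, and the limit geodesic possibly failing to be embedded or even retracing itself, in which case the conclusion must be read away from the finitely many transverse crossings) are genuine gaps in the \emph{statement} rather than in your argument, and they are harmless in every application in the paper, where the $\Gamma_k$ are length-minimizing and the limit is therefore embedded.
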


            			\bibliographystyle{plain}

            		\end{document}